\selectfont\symbol{60}\fontencoding{\encodingdefault}}
\newcommand{\assign}{:=}
\newcommand{\mathD}{\mathrm{D}}
\newcommand{\mathd}{\mathrm{d}}
\newcommand{\nobracket}{}
\newcommand{\nocomma}{}
\newcommand{\tmaffiliation}[1]{\\ #1}
\newcommand{\tmop}[1]{\ensuremath{\operatorname{#1}}}
\newcommand{\tmscript}[1]{\text{\scriptsize{$#1$}}}
\newcommand{\tmsep}{, }
\newcommand{\tmtextbf}[1]{{\bfseries{#1}}}
\newcommand{\tmtextit}[1]{{\itshape{#1}}}
\newenvironment{itemizedot}{\begin{itemize} }{\end{itemize}}
\newenvironment{proof}{\noindent\textbf{Proof\ }}{\hspace*{\fill}$\Box$\medskip}
\newenvironment{proof*}[1]{\noindent\textbf{#1\ }}{\hspace*{\fill}$\Box$\medskip}
\newtheorem{corollary}{Corollary}
\newtheorem{definition}{Definition}
\newtheorem{lemma}{Lemma}
\newtheorem{proposition}{Proposition}
{\theorembodyfont{\rmfamily}\newtheorem{remark}{Remark}}
\newtheorem{theorem}{Theorem}
\newcommand{\tmkeywords}{\textbf{Keywords:} }
\newcommand{\tmmsc}{\textbf{A.M.S. subject classification:} }
\newcommand{\VV}{\mathscr{C}}
\newcommand{\CF}{\mathscr{F}}
\newcommand{\CG}{\mathscr{G}}
\newcommand{\CS}{\mathscr{S}}
\begin{document}

\title{ The $\Phi^4_3$ measure via Girsanov's theorem}

\author{
  N.~Barashkov and M.Gubinelli
  \tmaffiliation{Hausdorff Center of Mathematics \&\\
  Institute of Applied Mathematics\\
  University of Bonn, Germany}\\
  \tt{\{barashkov,gubinelli\}@iam.uni-bonn.de}
}

\maketitle

\begin{abstract}
  We construct the $\Phi^4_3$ measure on a periodic three dimensional box as
  an absolutely continuous perturbation of a random shift of the Gaussian free
  field. The shifted measure is constructed via Girsanov's theorem and the
  relevant filtration is the one generated by a scale parameter. As a
  byproduct we give a self-contained proof that the $\Phi^4_3$ measure is
  singular wrt. the Gaussian free field.
\end{abstract}

\tmmsc{60H30}{\tmsep}{81T08}

\tmkeywords{Constructive Euclidean quantum field theory, Bou{\'e}--Dupuis
formula, paracontrolled calculus.}

\section{Introduction}

The $\Phi^4_3$ measure on the three dimensional torus \ $\Lambda
=\mathbbm{T}^3 = (\mathbbm{R}/ 2 \pi \mathbbm{Z})^3$ is the probability
measure $\nu$ on distributions $\CS' (\Lambda)$ corresponding to the formal
functional integral
\begin{equation}
  \nu (\mathd \varphi) = {}^{\backprime\backprime} \left\{ \frac{1}{Z} \exp
  \left[ - \lambda \int_{\Lambda} (\varphi^4 - \infty \varphi^2) \mathd x
  \right] \mu (\mathd \varphi) \right\}'' \label{eq:phi43}
\end{equation}
where $\mu$ is the law of the Gaussian free field with covariance $(1 -
\Delta)^{- 1}$ on $\Lambda$, $Z$ a normalization constant and $\lambda$ a
coupling constant. The $\infty$ appearing in this expression reminds us that
many things are wrong with this recipe. The key difficulty can be traced to
the fact that the measure we are looking for it is not absolutely continuous
wrt. the reference measure $\mu$. This fact seems part of the folklore even if
we could not find a rigorous proof for it in the available literature apart
from a work of Albeverio and Liang~{\cite{albeverio_remark_2008}} which
however refers to the Euclidean fields at time zero. The singularity of the
$\Phi^4_3$ measure is indeed a major technical difficulty in a rigorous study.
Obtaining a complete construction of this formal object (both in finite and
infinite volume) has been one of the main achievements of the constructive
quantum field theory
program~{\cite{glimm_positivity_1973,feldman_lambda_1974,park_lambda_1975,feldman_wightman_1976,magnen_infinite_1976,benfatto_ultraviolet_1980,brydges_new_1983}}.

\

In recent years the rigorous study of the $\Phi^4_3$ model has been pursued
from the point of view of \tmtextit{stochastic quantization}. In the original
formulation of Parisi--Wu~{\cite{parisi_perturbation_1981}}, stochastic
quantization is a way to introduce additional degrees of freedom (in
particular a dependence on a fictious time) in order to obtain an
\tmtextit{equation} whose solutions describe a measure of interest, in this
case the $\Phi^4_3$ measure on $\Lambda$ as in~(\ref{eq:phi43}) or its
counterpart in the full space. Rigorous analysis of stochastic quantization
for simpler models like $\Phi^4_2$ (the two-dimensional analog of
eq.~(\ref{eq:phi43})) \ started with the
work~{\cite{jona_lasinio_stochastic_1985}}. It has been only with the
fundamental work of Hairer on regularity
structures~{\cite{hairer_theory_2014}} that the three dimensional model could
be successfully attacked, see
also~{\cite{catellier_paracontrolled_2013,kupiainen_renormalization_2016}}.
This new perspective on this and related problems led to a series of new
results on the global space-time control of the stochastic
dynamics~{\cite{MWcomedown,gubinelli_global_2019,albeverio_invariant_2017,moinat_space_time_2018}}
and to a novel proof of the constructions of non-Gaussian Euclidean quantum
field theories in three dimensions~{\cite{gubinelli_pde_2018}}.

\

A conceptual advantage of stochastic quantization is that it is insensitive
to questions of absolute continuity wrt. to a reference measure. This, on the
other hand, is the main difficulty of the Gibbisan point of view as expressed
in eq.~(\ref{eq:phi43}). In order to explore further the tradeoffs of
different approaches we have recently developed a variational
method~{\cite{barashkov_gubinelli_variational}} for the construction and
\tmtextit{description} of $\Phi^4_3$ with which we were able to provide an
explicit formula for the Laplace transform of $\Phi^4_3$ in terms of a
stochastic control problem. In this control problem the controlled process
represents the scale-by-scale evolution of the interacting random field.

\

The present paper is the occasion to explore further this point of view by
constructing a novel measure via a random shift of the Gaussian free field and
proving that the $\Phi^4_3$ measure can be constructed as an absolutely
continuous perturbation thereof. Without entering into technical details now
let us give the broad outline of this construction. We consider a Brownian
martingale $(W_t)_{t \geqslant 0}$ with values in $\CS' (\Lambda)$ and such
that $W_t$ is a regularization of the Gaussian free field $\mu$ at scale $t$.
Let us denote $\mathbbm{P}$ its law, $\mathbbm{E}$ the corresponding
expectation. In particular $W_t \rightarrow W_{\infty}$ as $t \rightarrow
\infty$ and $W_{\infty}$ has law $\mu$. We can identify the $\Phi^4_3$ measure
$\nu$ as the weak limit $\nu^T \rightarrow \nu$ as $T \rightarrow \infty$ of
the family of probability measures $(\nu^T)_{T \geqslant 0}$ defined as
\[ \nu^T (A) =\mathbbm{P}^T (W_T \in A), \]
where $\mathbbm{P}^T$ is the measure on paths $(W_t)_{t \geqslant 0}$ with
density
\[ \frac{\mathd \mathbbm{P}^T}{\mathd \mathbbm{P}} = \frac{1}{Z_T} e^{- V_T
   (W_T)}, \]
and
\[ V_T (\varphi) \assign \lambda \int_{\Lambda} (\varphi (x)^4 - a_T \varphi
   (x)^2 + b_T) \mathd x, \]
is a quartic polynomial in the field $\varphi$ with $(a_T, b_T)_T$ a family of
(suitably diverging) renormalization constants. The presence of the scale
parameter $t \in \mathbbm{R}_+$ allows to introduce a filtration and a family
of measures $\mathbbm{Q}^v$ defined as the Girsanov transformation
\begin{equation}
  \left. \frac{\mathd \mathbbm{Q}^v}{\mathd \mathbbm{P}} \right|_{\CF_T} =
  \exp \left( L_T^v - \frac{1}{2} \langle L^v \rangle_T \right), \qquad L^v_t
  = \int_0^t \langle v_s, \mathd W_s \rangle_{L^2 (\Lambda)} \label{eq:girs}
\end{equation}
where $(\langle L^v \rangle_t)_{t \geqslant 0}$ is the quadratic variation of
the (scalar) local martingale $(L^v_t)_{t \geqslant 0}$ and $(v_t)_{t
\geqslant 0}$ is an adapted process with values in $L^2 (\Lambda)$. Let
\[ D_T \assign \frac{1}{Z_T} e^{- V_T (W_T)} \left( \frac{\mathd
   \mathbbm{Q}^v}{\mathd \mathbbm{P}} \right)^{- 1}, \]
be the density of $\mathbbm{P}^T$ wrt. $\mathbbm{Q}^v$. We will show that it
is possible to choose $v$ in such a way that the family $(D_T)_{T \geqslant
0}$ is uniformly integrable under $\mathbbm{Q}^v$ and that $D_T \rightarrow
D_{\infty}$ weakly in $L^1 (\mathbbm{Q}^v)$. With particular choice of $v$ we
call $\mathbbm{Q}^v$ the \tmtextit{drift} measure: it is the central object of
this paper. By Girsanov's theorem the canonical process $(W_t)_{t \geqslant
0}$ satisfies the equation
\[ \mathd W_t = v_t \mathd t + \mathd \tilde{W}_t, \qquad t \geqslant 0, \]
where $(\tilde{W}_t)_{t \geqslant 0}$ is a Gaussian martingale under
$\mathbbm{Q}^v$ (and has law equal to that of $(W_t)_{t \geqslant 0}$ under
$\mathbbm{P}$, that is is a regularized Gaussian free field). We will show
also that the drift $v_t$ can be written as a (polynomial) function of
$(\tilde{W}_s)_{s \in [0, t]}$, that is $v_t = \tilde{V}_t ((\tilde{W}_s)_{s
\in [0, t]})$. Therefore we have an explicit description of the process
$(W_t)_{t \geqslant 0}$ under the drift measure $\mathbbm{Q}^v$ as the unique
solution of the path-dependent SDE
\begin{equation}
  \mathd W_t = \tilde{V}_t ((\tilde{W}_s)_{s \in [0, t]}) \mathd t + \mathd
  \tilde{W}_t, \qquad t \geqslant 0. \label{eq:sde-girs}
\end{equation}
Let us note that this formula expresses the ``interacting'' random field
$(W_t)_t$ as a function of the ``free'' field $(\tilde{W}_t)_t$. In this
respect is a formula with very similar technical merits as the stochastic
quantization approach.

\

Intuitively this new measure $\mathbbm{Q}^v$, is half way between the
variational description in~{\cite{barashkov_gubinelli_variational}} and the
(formal) Gibbsian description of eq.~(\ref{eq:phi43}). It constitutes a
measure which is relatively explicit, easy to construct and analyze and which
can be used as reference measure for $\Phi^4_3$, very much like the Gaussian
free field can be used as reference measure for
$\Phi^4_2$~{\cite{glimm_quantum_1987}}.

\

As an application we provide a self-contained proof of the singularity of the
$\Phi^4_3$ measure $\nu$ wrt. the Gaussian free field $\mu$. \ As we already
remarked the singularity of $\Phi^4_3$ seems to belongs to the folklore and we
were not able to trace any written proof of that. However M.~Hairer, during a
conference at Imperial College in 2019 showed us an unpublished proof of him
of singularity using the stochastic quantization equation. Our proof and his
are very similar and we do not claim any essential novelty in this respect.
Albeit the proof is quite straightforward we wrote down all the details in
order to provide a reference for this fact. The main point of the present
paper remains that of describing the drift measure as a novel object in the
context of $\Phi^4_3$ and similar measures.

\

Our proof of singularity, in particular also shows that the drift measure
$\mathbbm{Q}^v$ is singular wrt. $\mathbbm{P}$. The intuitive reason is that
the drift $(V_t)_{t \geqslant 0}$ in the SDE~(\ref{eq:sde-girs}) is not
regular enough (as $t \rightarrow \infty$) to be along Cameron--Martin
directions for the law $\mathbbm{P}$ of the process $(W_t)_{t \geqslant 0}$
and therefore the Girsanov transform~(\ref{eq:girs}) gives a singular measure
when extended all the way to $T = + \infty$.

\

{\noindent}\tmtextbf{Acknowledgments.} M.G. would like to thank S. Albeverio,
D.~Brydges, C. Garban and M.~Hairer for interesting discussions on the topic
of singularity of $\Phi^4_3$. N.B would like to thank Bjoern Bringmann for
some helpful comments. The authors would like to thank the Isaac Newton
Institute for Mathematical Sciences for support and hospitality during the
program SRQ: Scaling limits, Rough paths, Quantum field theory during which
part of the work on this paper was undertaken. This work is supported by DFG
via CRC 1060 and by EPSRC via Grant Number EP/R014604/1.

\

{\noindent}\tmtextbf{Notations.} Let us fix some notations and objects.
\begin{itemizedot}
  \item For $a \in \mathbbm{R}^d$ we let $\langle a \rangle \assign (1 + | a
  |^2)^{1 / 2}$.
  
  \item The constant $\varepsilon > 0$ represents a small positive number
  which can be different from line to line.
  
  \item Denote with $\CS (\Lambda)$ the space of Schwartz functions on
  $\Lambda$ and with $\CS' (\Lambda)$ the dual space of tempered
  distributions. The notation $\hat{f}$ or $\CF f$ stands for the space
  Fourier transform of $f$ and we will write $g (\mathD)$ to denote the
  Fourier multiplier operator with symbol $g : \mathbbm{R}^n \rightarrow
  \mathbbm{R}$, i.e. $\CF (g (\mathD) f) = g \CF f$.
  
  \item $B^{\alpha}_{p, q} = B^{\alpha}_{p, q} (\Lambda)$ denotes the Besov
  spaces of regularity $\alpha$ and integrability indices $p, q$ as usual.
  $\VV^{\alpha} = \VV^{\alpha} (\Lambda)$ is the H{\"o}lder--Besov space
  $B^{\alpha}_{\infty, \infty}$, $W^{\alpha, p} = W^{\alpha, p} (\Lambda)$
  denote the standard fractional Sobolev spaces defined by the norm $\| f
  \|_{W^{s, q}} \assign \| \langle \mathD \rangle^s f \|_{L^q}$ and
  $H^{\alpha} = W^{\alpha, 2}$. The symbols $\prec, \succ, \circ$ denotes
  spatial paraproducts wrt. a standard Littlewood--Paley decomposition. The
  reader is referred to Appendix~\ref{sec:appendix-para} for an overview of
  the functional spaces and paraproducts.
\end{itemizedot}

\section{The setting}

The setting of this paper is the same of that of our variational
study~{\cite{barashkov_gubinelli_variational}}. In this section we will
briefly recall it and also state some results from that paper which will be
needed below. They concern the Bou{\'e}--Dupuis formula and certain estimates
which will be important also in our analysis of absolute continuity.

\

Let $\Omega \assign C \left( \mathbbm{R}_+ ; \VV^{- 3 / 2 - \varepsilon}
(\Lambda) \right)$ and $\CF$ be the Borel $\sigma$--algebra of $\Omega$. On
$\left( \Omega, \CF \right)$ consider the probability measure $\mathbbm{P}$
which makes the canonical process $(X_t)_{t \geqslant 0}$ a cylindrical
Brownian motion on $L^2 (\Lambda)$ and let $\left( \CF_t \right)_{t \geqslant
0}$ the associated filtration. In the following $\mathbbm{E}$ without any
qualifiers will denote expectations wrt. $\mathbbm{P}$ and
$\mathbbm{E}_{\mathbbm{Q}}$ will denote expectations wrt. some other measure
$\mathbbm{Q}$.

On the probability space $\left( \Omega, \CF, \mathbbm{P} \right)$ there
exists a collection $(B_t^n)_{n \in (\mathbbm{Z})^3}$ of complex
(2-dimensional) Brownian motions, such that $\overline{B^n_t} = B^{- n}_t$,
$B^n_t, B^m_t$ independent for $m \neq \pm n$ and $X_t = \sum_{n \in
\mathbbm{Z}^3} e^{i \langle n, \cdot \rangle} B^n_t$.

\

Fix some $\rho \in C_c^{\infty} (\mathbbm{R}_+, \mathbbm{R}_+)$,decreasing,
such that $\rho = 1$ on $B (0, 9 / 10)$ and $\tmop{supp} \rho \subset B (0,
1)$. For $x \in \mathbbm{R}^3$ let $\rho_t (x) \assign \rho (\langle x \rangle
/ t)$ with $\langle x \rangle \assign (1 + | x |^2)^{1 / 2}$ and
\[ \sigma_t (x) \assign \left( \frac{\mathd}{\mathd t} (\rho^2_t (x))
   \right)^{1 / 2} = (- 2 (\langle x \rangle / t) \rho (\langle x \rangle / t)
   \rho' (\langle x \rangle / t))^{1 / 2} / t^{1 / 2} . \]
Let $J_s = \sigma_s (\mathD) \langle \mathD \rangle^{- 1}$ and consider the
process $(W_t)_{t \geqslant 0}$ defined by
\begin{equation}
  W_t \assign \int_0^t J_s \mathd X_s = \sum_{n \in \mathbbm{Z}^3} e^{i
  \langle n, \cdot \rangle} \int_0^t \frac{\sigma_s (n)}{\langle n \rangle} d
  B^n_s, \qquad t \geqslant 0. \label{eq:def-Y}
\end{equation}
It is a centered Gaussian process with covariance
\begin{eqnarray*}
  \mathbbm{E} [\langle W_t, \varphi \rangle \langle W_s, \psi \rangle] & = &
  \sum_{n \in \mathbbm{Z}^3} \frac{\rho_{\min (s, t)}^2 (n)}{\langle n
  \rangle^2} \hat{\varphi} (n) \overline{\hat{\psi} (n)},
\end{eqnarray*}
for any $\varphi, \psi \in \CS (\Lambda)$ and $t, s \geqslant 0$, by Fubini
theorem and Ito isometry. By dominated convergence $\lim_{t \rightarrow
\infty} \mathbbm{E} [\langle W_t, \varphi \rangle \langle W_t, \psi \rangle] =
\sum_{n \in \mathbbm{Z}^3} \langle n \rangle^{- 2} \hat{\varphi} (n)
\overline{\hat{\psi} (n)}$ for any $\varphi \nocomma, \psi \in L^2 (\Lambda)$.
For any finite ``time'' $T$ the random field $W_T$ on $\Lambda$ has a bounded
spectral support and the stopped process $W^T_t = W_{t \wedge T}$ for any
fixed $T > 0$, is in $C (\mathbbm{R}_+, C^{\infty} (\Lambda))$. Furthermore
$(W^T_t)_t$ only depends on a finite subset of the Brownian motions $(B^n)_{n
\in \mathbbm{Z}^3}$. We write $g (\mathD)$ for the Fourier multiplier operator
with symbol $g$.

\

Observe that $J_t$ has the property, that for some function $f \in B_{p,
p}^s$ or $f \in W_{}^{s, p}$ with $p \in [1, \infty]$ and $s \in \mathbbm{R}$,
for any $\alpha \in \mathbbm{R}$
\[ \| J_t f \|_{B_{p, p}^{s + 1 - \alpha}} \lesssim \langle t \rangle^{-
   \alpha - 1 / 2} \| f \|_{B_{p, p}^s} . \]

We will denote by $\llbracket W^n_t \rrbracket$, $n = 1, 2, 3$, the $n$-th
Wick-power of the Gaussian random variable $W_t$ (under $\mathbbm{P}$) and
introduce the convenient notations $\mathbbm{W}_t^2 \assign 12 \llbracket
W^2_t \rrbracket$, $\mathbbm{W}_t^3 \assign 4 \llbracket W_t^3 \rrbracket$.
Furthermore we will write $\llbracket (\langle \mathD \rangle^{- 1 / 2} W_t)^n
\rrbracket, n \in \mathbbm{N}$ for the $n$-th Wick-power of $\langle \mathD
\rangle^{- 1 / 2} W_t$ . It exists for any $0 < t < \infty$ and any $n
\geqslant 1$ since it is easy to see that $\langle \mathD \rangle^{- 1 / 2}
W_t$ has a covariance with a diagonal behavior which can be controlled by
$\log \langle t \rangle$. These Wick powers converge as $T \rightarrow \infty$
in spaces of distributions with regularities given in the following table:

\begin{table}[h]
\centering
  \begin{tabular}{cccc}
    $W$ & $\mathbbm{W}^2$ & $s \mapsto J_s \mathbbm{W}_s^3$ &
    $\llbracket (\langle \mathD \rangle^{- 1 / 2} W)^n \rrbracket$\\
    \hline
    $C \VV^{- 1 / 2 -}$ & $C \VV^{- 1 -}$ & $C \VV^{- 1 / 2 -} \cap L^2 \VV^{-
    1 / 2 -}$ & $C \VV^{0 -}$
  \end{tabular}
  \caption{\label{table:reg}Regularities of the various stochastic objects,
  the domain of the time variable is understood to be $[0, \infty]$, $C
  \VV^{\alpha} = C \left( [0, \infty] ; \VV^{\alpha} \right)$ and $L^2
  \VV^{\alpha} = L^2 \left( \mathbbm{R}_+ ; \VV^{\alpha} \right)$. Estimates
  in these norms holds a.s. and in $L^p (\mathbbm{P})$ for all $p \geqslant 1$
  (see {\cite{barashkov_gubinelli_variational}}).}
\end{table}

We denote by $\mathbbm{H}_a$ the space of $\left( \CF_t \right)_{t \geqslant
0}$-progressively measurable processes which are $\mathbbm{P}$-almost surely
in $\mathcal{H} \assign L^2 (\mathbbm{R}_+ \times \Lambda)$. We say that an
element $v$ of $\mathbbm{H}_a$ is a \tmtextit{drift}. Below we will need also
drifts belonging to $\mathcal{H}^{\alpha} \assign L^2 (\mathbbm{R}_+ ;
H^{\alpha} (\Lambda))$ for some $\alpha \in \mathbbm{R}$ where $H^{\alpha}
(\Lambda)$ is the Sobolev space of regularity $\alpha$. We denote the
corresponding space with $\mathbbm{H}_a^{\alpha}$. For any $v \in
\mathbbm{H}_a$ define the measure $\mathbbm{Q}^v$ on $\Omega$ by
\[ \frac{\mathd \mathbbm{Q}^v}{\mathd \mathbbm{P}} = \exp \left[
   \int_0^{\infty} v_s \mathd X_s - \frac{1}{2} \int_0^{\infty} \| v_s \|^2
   \mathd s \right] . \]
Denote with $\mathbbm{H}_c \subseteq \mathbbm{H}_a$ the set of drifts $v \in
\mathbbm{H}_a$ for which $\mathbbm{Q}^v (\Omega) = 1$, and set $W^v \assign W
- I (v)$, where
\[ I_t (v) = \int^t_0 J_s v_s \mathd s. \]
Below will need also the following operators. For all $t \geqslant 0$ let
$\theta_t : \mathbbm{R}^3 \rightarrow [0, 1]$ be a smooth function such that
\begin{equation}
  \begin{array}{lll}
    \theta_t (\xi) \sigma_s (\xi) & = & \text{$0$ for $s \geqslant t$,}\\
    \theta_t (\xi) & = & \text{$1$ for $| \xi | \leqslant t / 2$ \ provided
    that $t \geqslant T_0$}
  \end{array} \label{theta}
\end{equation}
for some $T_0 > 0$. For example one can fix smooth functions $\tilde{\theta},
\eta : \mathbbm{R}^3 \rightarrow \mathbbm{R}_+$ such that $\tilde{\theta}
(\xi) = 1$ if $| \xi | \leqslant 1 / 2$ and $\tilde{\theta} (\xi) = 0$ if $|
\xi | \geqslant 2 / 3$ , $\eta (\xi) = 1$ if $| \xi | \leqslant 1$ and $\eta
(\xi) = 0$ if $| \xi | \geqslant 2$. Then let $\tilde{\theta}_t (\xi) \assign
\tilde{\theta} (\xi / t)$ and define
\[ \theta_t (\xi) = (1 - \eta (\xi)) \tilde{\theta}_t (\xi) + \zeta (t) \eta
   (\xi) \tilde{\theta}_t (\xi) \]
where $\zeta (t) : \mathbbm{R}_+ \rightarrow \mathbbm{R}$ is a smooth function
such that $\zeta (t) = 0$ for $t \leqslant 10$ and $\zeta (t) = 1$ for $t
\geqslant 3$. Then eq~(\ref{theta}) will hold with $T_0 = 3$. We will let
\begin{equation}
  f^{\flat} \assign \theta (\mathD) f \label{eq:flat}
\end{equation}
for any $f \in \CS' (\Lambda)$..

\

Our aim here to study the measures $\mu_T$ defined on $\VV^{- 1 / 2 -
\varepsilon}$ as
\[ \frac{\mathd \mu_T}{\mathd \mathbbm{P}} = e^{- V_T (W_T)} \]
for $\varphi \in C^{\infty} (\Lambda)$
\begin{equation}
  V_T (\varphi) \assign \lambda \int_{\Lambda} (\varphi^4 - a_T \varphi^2 +
  b_T) \mathd x, \label{eq:def-V}
\end{equation}
with suitable $a_T, b_T \rightarrow \infty$. For convenience the measure
$\mu^T$ is not normalized and, wrt. to the notations in the introduction we
have
\[ \frac{\mathd \mathbbm{P}^T}{\mathd \mu^T} = \frac{1}{\mu^T (\Omega)} . \]

With these notations we can recall the following results of
{\cite{barashkov_gubinelli_variational}}.

\begin{theorem}
  \label{eq:th1}For any $a_T, b_T \in \mathbbm{R}$, and \ $f : \VV^{- 1 / 2 -
  \varepsilon} (\Lambda) \rightarrow \mathbbm{R}$ with linear growth,
  recall~(\ref{eq:def-V}) and let $V^f_T (\varphi) \assign f (\varphi) + V_T
  (\varphi) .$ Then the formula
  \begin{equation}
    \int_{\CS' (\Lambda)} e^{- V^f_T (\varphi)} \mu (\mathd \varphi)^{} = -
    \log \mathbbm{E} [e^{- V^f_T (W_T)}] = \inf_{u \in \mathbbm{H}_a}
    \mathbbm{E} \left[ V^f_T (W_T + I_T (u)) + \frac{1}{2} \int^T_0 \| u_t
    \|^2_{L^2 (\Lambda)} \mathd t \right] \label{eq:contr-prob}
  \end{equation}
  holds for any finite $T$.
\end{theorem}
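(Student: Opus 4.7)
The plan is to establish~(\ref{eq:contr-prob}) as an instance of the Bou\'e--Dupuis variational formula, exploiting the fact that $(W_t)_{t \in [0, T]}$ depends only on finitely many of the Brownian motions $B^n$ thanks to the compact spectral support of $\sigma_s$. Since only modes with $| n | \lesssim T$ contribute to $W_T$, the functional $V^f_T (W_T)$ is essentially a function of a finite-dimensional Brownian path on $[0, T]$, and the basic integrability $\mathbbm{E} [e^{- V^f_T (W_T)}] \in (0, \infty)$ follows from the coercivity of $V_T$ (with the renormalizations $a_T, b_T$ chosen to make it bounded below, up to a random constant depending on finitely many stochastic objects) combined with the linear growth of $f$.

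For the upper bound, I would fix any $u \in \mathbbm{H}_c$ and perform the Girsanov change of measure to $\mathbbm{Q}^u$. Under $\mathbbm{Q}^u$ the shifted process $\tilde{X}_t \assign X_t - \int_0^t u_s \mathd s$ is a cylindrical Brownian motion, so $W_T = \tilde{W}_T + I_T (u)$ with $\tilde{W}$ built from $\tilde{X}$ in analogy with $W$. Writing $\mathbbm{E} [e^{- V^f_T (W_T)}] = \mathbbm{E}_{\mathbbm{Q}^u} [e^{- V^f_T (W_T)} \mathd \mathbbm{P}/ \mathd \mathbbm{Q}^u]$, expressing $\log (\mathd \mathbbm{P}/ \mathd \mathbbm{Q}^u)$ in terms of $\tilde{X}$, and applying Jensen's inequality to the convex function $- \log$, the martingale contribution $\int_0^T \langle u_s, \mathd \tilde{X}_s \rangle$ has vanishing $\mathbbm{Q}^u$-expectation and I obtain
\begin{equation*}
- \log \mathbbm{E} [e^{- V^f_T (W_T)}] \leqslant \mathbbm{E}_{\mathbbm{Q}^u} \left[ V^f_T (\tilde{W}_T + I_T (u)) + \tfrac{1}{2} \int_0^T \| u_s \|^2_{L^2 (\Lambda)} \mathd s \right].
\end{equation*}
Since under $\mathbbm{Q}^u$ the pair $(\tilde{W}, u)$ has the same law as $(W, \tilde{u})$ under $\mathbbm{P}$ for a corresponding $\tilde{u} \in \mathbbm{H}_a$, taking the infimum over $u$ gives the ``$\leqslant$'' direction of~(\ref{eq:contr-prob}).

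For the matching lower bound and attainment of the infimum, the plan is to construct the optimizer by martingale representation. Let $M_t \assign \mathbbm{E} [e^{- V^f_T (W_T)} | \CF_t]$: it is a strictly positive $\mathbbm{P}$-martingale with $M_T = e^{- V^f_T (W_T)}$ and $M_0 = \mathbbm{E} [e^{- V^f_T (W_T)}]$. The martingale representation theorem (in the finite-dimensional reduction) yields $\mathd M_t = \langle Z_t, \mathd X_t \rangle$ for some adapted $Z$, and It\^o's formula applied to $- \log M$ produces
\begin{equation*}
V^f_T (W_T) = - \log M_0 - \int_0^T \langle u^{\star}_s, \mathd X_s \rangle + \tfrac{1}{2} \int_0^T \| u^{\star}_s \|^2 \mathd s, \qquad u^{\star}_s \assign Z_s / M_s.
\end{equation*}
Taking expectation under $\mathbbm{Q}^{u^{\star}}$, where $\mathd X_s = u^{\star}_s \mathd s + \mathd \tilde{X}_s$, $W_T = \tilde{W}_T + I_T (u^{\star})$, and $\int \langle u^{\star}, \mathd \tilde{X} \rangle$ is a zero-mean martingale, exactly matches the upper bound:
\begin{equation*}
- \log \mathbbm{E} [e^{- V^f_T (W_T)}] = \mathbbm{E}_{\mathbbm{Q}^{u^{\star}}} \left[ V^f_T (\tilde{W}_T + I_T (u^{\star})) + \tfrac{1}{2} \int_0^T \| u^{\star}_s \|^2 \mathd s \right],
\end{equation*}
so $u^{\star}$ attains the infimum and~(\ref{eq:contr-prob}) holds.

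The main obstacle throughout is the admissibility and integrability bookkeeping: one must verify that $u^{\star} \in \mathbbm{H}_c$ so that $\mathbbm{Q}^{u^{\star}}$ is a genuine probability measure and every stochastic integral appearing above is a true martingale with vanishing expectation, and similarly that the Jensen step in the upper bound is justified with all expectations finite. Thanks to the finite-dimensional reduction, the coercivity of $V_T$ (which gives uniform lower bounds on $M_t$ and hence upper bounds on $M_t^{- 1}$), and the linear growth of $f$ (which controls $\nabla V^f_T (W_T)$ in $L^p (\mathbbm{P})$ for every $p$), these reduce to standard Novikov-type estimates on $M_t$, $M_t^{- 1}$, and $Z_t$. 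Once this is in place the identity~(\ref{eq:contr-prob}) is a textbook Girsanov and It\^o manipulation.
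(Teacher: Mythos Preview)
Your proposal is correct and follows the standard route to the Bou\'e--Dupuis identity: Girsanov plus Jensen for the inequality $-\log \mathbbm{E}[e^{-F}] \leqslant \mathbbm{E}[F(W+I(u)) + \tfrac{1}{2}\|u\|^2_{\mathcal{H}}]$, and martingale representation of $M_t = \mathbbm{E}[e^{-F}\mid\CF_t]$ to produce the optimal drift $u^{\star}$ saturating it. The paper, by contrast, does not prove Theorem~\ref{eq:th1} directly at all: it simply records that the result is a special case of the general Bou\'e--Dupuis formula stated immediately afterwards (with proof deferred to~\cite{barashkov_gubinelli_variational}), the hypotheses $\mathbbm{E}[|V^f_T(W_T)|^p]<\infty$ and $\mathbbm{E}[e^{-qV^f_T(W_T)}]<\infty$ being immediate because for fixed $T$ the field $W_T$ is a function of finitely many Gaussians, the quartic $V_T$ is pointwise bounded below for any $a_T,b_T$, and $f$ has linear growth. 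So at the technical level the two coincide: you are unpacking exactly the argument that the paper black-boxes. One small slip in your last paragraph: linear growth of $f$ controls $f(W_T)$ in every $L^p(\mathbbm{P})$, not $\nabla V^f_T(W_T)$ (indeed $f$ need not be differentiable); but the martingale-representation step uses no smoothness of $F$, so this does not affect the argument.
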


This is a consequence of the more general Bou{\'e}--Dupuis formula which can
be stated as follows.

\begin{theorem}[BD formula]
  Assume $F : C ([0, T], C^{\infty} (\Lambda)) \rightarrow \mathbbm{R}$, be
  Borel measurable and such that there exist $p, q \in (1, \infty)$, with $1 /
  p + 1 / q = 1$, $\mathbbm{E} [| F (W) |^p] < \infty$ and $\mathbbm{E} [|
  e^{- F (W)} |^q] < \infty$(where we can regard $W$ as an element of $C ([0,
  T], C^{\infty} (\Lambda))$ by restricting to $[0, T]$). Then
  \begin{equation}
    - \log \mathbbm{E} [e^{- F (W)}] = \inf_{u \in \mathbbm{H}_a} \mathbbm{E}
    \left[ F (W + I (u)) + \frac{1}{2} \int^T_0 \| u_s \|_{L^2 (\Lambda)}^2
    \right] . \label{eq:bd}
  \end{equation}
\end{theorem}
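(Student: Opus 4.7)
My plan is to prove the two directions of identity~(\ref{eq:bd}) separately. The upper bound
\[
-\log \mathbbm{E}[e^{-F(W)}] \leq \mathbbm{E}\Bigl[F(W+I(u)) + \tfrac{1}{2} \int_0^T \|u_s\|_{L^2(\Lambda)}^2 \mathd s\Bigr]
\]
for every $u \in \mathbbm{H}_a$ is a direct consequence of Girsanov's theorem combined with Jensen's inequality. The matching lower bound (attainment of the infimum, up to approximation) comes from the Gibbs variational principle applied to the tilted probability measure $\mathbbm{Q}^\star$ with $\mathd \mathbbm{Q}^\star / \mathd \mathbbm{P} = e^{-F(W)}/\mathbbm{E}[e^{-F(W)}]$, together with the Brownian martingale representation theorem, which produces the associated optimal drift $u^\star$.

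For the upper bound, fix $u \in \mathbbm{H}_c$ of finite $L^2$ energy; under $\mathbbm{Q}^u$, $\tilde{X}_t \assign X_t - \int_0^t u_s \mathd s$ is a cylindrical Brownian motion, so $\tilde{W}_t \assign \int_0^t J_s \mathd \tilde{X}_s$ has the law of $W$ under $\mathbbm{P}$, and $W = \tilde{W} + I(u)$ holds $\mathbbm{Q}^u$-a.s. Rewriting
\[
-\log \mathbbm{E}[e^{-F(W)}] = -\log \mathbbm{E}_{\mathbbm{Q}^u} \Bigl[ \exp \Bigl( -F(\tilde{W}+I(u)) - \log \tfrac{\mathd \mathbbm{Q}^u}{\mathd \mathbbm{P}} \Bigr) \Bigr]
\]
and applying Jensen's inequality together with the identity $\mathbbm{E}_{\mathbbm{Q}^u}[\log (\mathd \mathbbm{Q}^u/\mathd \mathbbm{P})] = \frac{1}{2} \mathbbm{E}_{\mathbbm{Q}^u} \int_0^T \|u_s\|^2 \mathd s$ (vanishing in expectation of the $\mathbbm{Q}^u$-martingale $\int u_s \mathd \tilde{X}_s$, after localization) gives
\[
-\log \mathbbm{E}[e^{-F(W)}] \leq \mathbbm{E}_{\mathbbm{Q}^u} \Bigl[ F(\tilde{W}+I(u)) + \tfrac{1}{2} \int_0^T \|u_s\|^2 \mathd s \Bigr].
\]
Reparametrizing $u$ as an adapted functional $v$ of the paths of $\tilde{X}$ (via the fixed-point relation $X = \tilde{X} + \int_0^\cdot v_s \mathd s$) rewrites the right-hand side as $\mathbbm{E}_{\mathbbm{P}}[F(W+I(v)) + \frac{1}{2} \int_0^T \|v_s\|^2 \mathd s]$, using that $\tilde{X}$ under $\mathbbm{Q}^u$ has the same law as $X$ under $\mathbbm{P}$; infimizing over $v \in \mathbbm{H}_a$ yields the upper bound.

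For the reverse inequality, the integrability hypothesis together with H{\"o}lder's inequality ensures $H(\mathbbm{Q}^\star | \mathbbm{P}) < \infty$, and the Brownian martingale representation theorem for the cylindrical filtration yields an adapted $u^\star$ with $\mathbbm{E}_{\mathbbm{Q}^\star} \int_0^T \|u^\star_s\|^2 \mathd s < \infty$ such that $\mathbbm{E}[e^{-F(W)} | \CF_t]$ equals the Dol{\'e}ans exponential of $\int_0^t u^\star_s \mathd X_s$. Running the Girsanov computation above with $u=u^\star$, the argument of $\exp$ inside $-\log \mathbbm{E}_{\mathbbm{Q}^\star}[\exp(\cdot)]$ becomes $\mathbbm{Q}^\star$-a.s.\ the constant $\log \mathbbm{E}[e^{-F(W)}]$, so Jensen's inequality saturates and the reparametrized drift $v^\star$ attains the infimum in~(\ref{eq:bd}). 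The main technical obstacle is the joint-law identification between $(X,u)$ under $\mathbbm{P}$ and $(\tilde{X},v)$ under $\mathbbm{Q}^u$, which requires measurably solving the underlying Girsanov fixed-point equation; this is handled by approximating with bounded drifts depending on finitely many Fourier modes of $X$, and then passing to the limit using the integrability hypotheses $F(W) \in L^p$ and $e^{-F(W)} \in L^q$.
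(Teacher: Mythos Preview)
The paper does not prove this theorem. It is stated without proof as the Bou{\'e}--Dupuis formula, quoted from the literature; the specific version with the $L^p$--$L^q$ integrability hypotheses is taken from the authors' earlier paper~\cite{barashkov_gubinelli_variational}, which in turn builds on the original Bou{\'e}--Dupuis result and its refinements. So there is nothing in the present paper to compare your argument against.

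That said, your sketch follows the standard strategy used in those references: the upper bound via Girsanov plus Jensen, and the lower bound via the Gibbs variational principle combined with the Brownian martingale representation to produce the optimal drift. The one genuinely delicate point---which you correctly isolate---is the identification of joint laws needed to pass from $\mathbbm{E}_{\mathbbm{Q}^u}[F(\tilde W + I(u)) + \tfrac12\int\|u\|^2]$ to an expression of the form $\mathbbm{E}_{\mathbbm{P}}[F(W+I(v)) + \tfrac12\int\|v\|^2]$ for some $v\in\mathbbm{H}_a$. This is exactly where the approximation by bounded, finite-dimensional (simple) drifts enters in the original Bou{\'e}--Dupuis argument, and your proposal to handle it that way is the correct one. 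Note however that for the \emph{upper} bound one can bypass this reparametrization entirely by running Girsanov in the other direction (shift by $-u$ so that $W+I(u)$ becomes the free field under the new measure), which makes that half of the argument cleaner; the joint-law issue is only unavoidable in the lower-bound direction.
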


We will use several times below the Bou{\'e}--Dupuis formula~(\ref{eq:bd}) in
order to control exponential integrability of various functionals. By a
suitable choice of renormalization and a change of variables in the control
problem~(\ref{eq:contr-prob}) we were able
in~{\cite{barashkov_gubinelli_variational}} to control the functional in
Theorem~\ref{eq:th1} uniformly up to infinity:

\begin{theorem}
  \label{thm:equicoerc}There exist a sequence $(a_T, b_T)_T$ with $a_T, b_T
  \rightarrow \infty$ as $T \rightarrow \infty$, such that
  \begin{eqnarray*}
    &  & \mathbbm{E} \left[ V^f_T (W_T + I_T (u)) + \frac{1}{2} \int^T_0 \|
    u_t \|_{L^2 (\Lambda)} \mathd t \right]\\
    & = & \mathbbm{E} \left[ \Psi^f_T (W, I (u)) + \lambda \int (I_T (u))^4 +
    \frac{1}{2} \| l^T (u) \|_{\mathcal{H}}^2 \right]
  \end{eqnarray*}
  where (Recall that $I_t^{\flat} (u) = \theta (\mathD) I_t (u)$
  by~(\ref{eq:flat}))
  \begin{equation}
    l_t^T (u) \assign u_t + \lambda \mathbbm{1}_{t \leqslant T}
    \mathbbm{W}^{\langle 3 \rangle}_t + \lambda \mathbbm{1}_{t \leqslant T}
    J_t (\mathbbm{W}_t^2 \succ I_t^{\flat} (u)) \label{eq:renorm-drift}
  \end{equation}
  and the functionals $\Psi^f_T : C ([0, T], C^{\infty} (\Lambda)) \times C
  ([0, T], C^{\infty} (\Lambda)) \rightarrow \mathbbm{R}$ satisfy the
  following bound
  \[ | \Psi^f_T (W, I (u)) | \leqslant Q_T (W) + \frac{1}{4} (\| I_T (u)
     \|^4_{L^4} + \| l^T (u) \|_{\mathcal{H}}^2) \]
  where $Q_T (W)$ is a function of $W$ independent of $u$ and such that
  $\sup_T \mathbbm{E} [| Q_T (W) |] < \infty$.
\end{theorem}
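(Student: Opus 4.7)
The plan is to expand $V^f_T(W_T + I_T(u))$ binomially in $W_T + I_T(u)$, to use $(a_T, b_T)$ to Wick-order the pure powers of $W_T$ and to kill the subdivergence coming from the resonant product $\mathbbm{W}^2_T \circ I_T(u)^2$, and then to reorganize the surviving singular cross terms so that, after a completion of squares in $u$, the free quadratic $\tfrac{1}{2}\|u\|^2_{\mathcal{H}}$ becomes $\tfrac{1}{2}\|l^T(u)\|^2_{\mathcal{H}}$. Everything left over is gathered into $\Psi^f_T$ and bounded by stochastic data plus a controlled fraction of the coercive terms $\|l^T(u)\|^2_{\mathcal{H}} + \|I_T(u)\|_{L^4}^4$.

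After expanding and Wick-ordering, the only genuinely singular cross terms are $4\lambda \int_\Lambda \llbracket W_T^3 \rrbracket I_T(u) \mathd x = \lambda \int_\Lambda \mathbbm{W}^3_T I_T(u) \mathd x$ and $\tfrac{1}{2}\lambda \int_\Lambda \mathbbm{W}^2_T I_T(u)^2 \mathd x$. For the cubic term I would write $I_T(u) = \int_0^T J_s u_s \mathd s$ and integrate by parts in the scale variable $s$, using that the primitive of $s \mapsto J_s \mathbbm{W}^3_s$ in $L^2 \VV^{-1/2-}$ is the object $\mathbbm{W}^{\langle 3\rangle}$ of Table~\ref{table:reg}, to obtain $\lambda \int_\Lambda \mathbbm{W}^3_T I_T(u) \mathd x = \lambda \int_0^T \langle u_t, \mathbbm{W}^{\langle 3\rangle}_t\rangle_{L^2} \mathd t$. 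For the quadratic term I would paraproduct-decompose $\mathbbm{W}^2_T \cdot I_T(u)^2 = \mathbbm{W}^2_T \prec I_T(u)^2 + \mathbbm{W}^2_T \succ I_T(u)^2 + \mathbbm{W}^2_T \circ I_T(u)^2$: the resonant piece is renormalized by $a_T$; the high-low piece, after substituting $I_T^\flat(u)$ for $I_T(u)$ on its relevant Littlewood--Paley scales and integrating by parts in $t$, produces the contribution $\lambda \int_0^T \langle u_t, J_t(\mathbbm{W}^2_t \succ I_t^\flat(u))\rangle_{L^2} \mathd t$; the low-high piece and the various commutator remainders are regular enough to go directly into $\Psi^f_T$.

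Completing the square in $u$ against these two linear functionals produces $\tfrac{1}{2}\|l^T(u)\|^2_{\mathcal{H}}$ up to an $O(\lambda^2)$ term depending only on $W$ and $I^\flat(u)$. Together with $\lambda \int I_T(u)^4 \mathd x$ this gives the coercive terms of the statement. The remaining pieces---the pure stochastic terms, the low-high paraproduct, the integration-by-parts remainders, the cross term $4\lambda \int W_T I_T(u)^3 \mathd x$, the $O(\lambda^2)$ completion-of-square correction, and $f(W_T + I_T(u))$---constitute $\Psi^f_T$. For each I would combine the stochastic estimates from Table~\ref{table:reg}, the paraproduct continuity and commutator bounds from the appendix, the smoothing bound $\|J_t f\|_{B^{s+1-\alpha}_{p,p}} \lesssim \langle t\rangle^{-\alpha-1/2}\|f\|_{B^s_{p,p}}$, and Young's inequality to distribute every power of $I_T(u)$ in $L^4$ and of $l^T(u)$ in $\mathcal{H}$, so that at most $\tfrac{1}{4}(\|I_T(u)\|_{L^4}^4 + \|l^T(u)\|^2_{\mathcal{H}})$ appears on the right-hand side, the remainder forming the $u$-independent $Q_T(W)$.

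The main obstacle is the quadratic cross term: it sits at the borderline of regularity in three dimensions, and the cutoff $\flat$ must be combined with the smoothing gain of $J_t$ in a precisely balanced way in order to make the $a_T$-renormalization well defined and also to reproduce the exact form of $l^T(u)$ upon completing the square. A secondary but nontrivial technical point is to verify that $\mathbbm{E}[|Q_T(W)|]$ is uniformly bounded in $T$, which reduces to the $L^p(\mathbbm{P})$ bounds for the stochastic objects in Table~\ref{table:reg}.
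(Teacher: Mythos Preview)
The paper does not prove this theorem here; it is quoted from the companion paper~\cite{barashkov_gubinelli_variational} (see the sentence ``With these notations we can recall the following results of~\cite{barashkov_gubinelli_variational}'' just before Theorem~\ref{eq:th1}). Your sketch is a faithful outline of the argument in that reference: binomial expansion of $V^f_T(W_T+I_T(u))$, Wick ordering of the pure $W_T$ powers, conversion of the two singular cross terms into linear functionals of $u$ via integration by parts in the scale variable (producing the $\mathbbm{W}^{\langle 3\rangle}$ and $J_t(\mathbbm{W}^2_t\succ I^\flat_t(u))$ contributions), completion of the square to produce $\tfrac12\|l^T(u)\|^2_{\mathcal H}$, and paraproduct/commutator estimates plus Young's inequality for the remainder~$\Psi^f_T$.

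One point where your description is slightly too compressed: the role of the counterterms is more layered than ``the resonant piece is renormalized by $a_T$''. The first-order (Wick/tadpole) divergence is indeed absorbed by $a_T$, but after completing the square the $O(\lambda^2)$ correction $\tfrac{\lambda^2}{2}\|\mathbbm{W}^{\langle 3\rangle}+J(\mathbbm{W}^2\succ I^\flat(u))\|^2_{\mathcal H}$ generates further divergences: the pure $\|\mathbbm{W}^{\langle 3\rangle}\|^2_{\mathcal H}$ piece is a divergent constant (this is where $b_T$ enters), and the cross and square pieces, together with the resonant product $\mathbbm{W}^2\circ I(u)$ that you mention, produce the second-order (logarithmic) mass divergence which also feeds into~$a_T$. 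Tracking these cancellations precisely is what makes $\sup_T\mathbbm{E}[|Q_T(W)|]<\infty$ hold, and is the substantive content of the proof in~\cite{barashkov_gubinelli_variational}; your final paragraph acknowledges this but understates the bookkeeping involved.
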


As a consequence we obtain the following corollary(See corollary 1 and Lemma 6
in {\cite{barashkov_gubinelli_variational}})

\begin{corollary}
  \label{cor:tightness}For $f : \VV^{- 1 / 2 - \varepsilon} (\Lambda)
  \rightarrow \mathbbm{R}$ with linear the bound:
  \[ - C \leqslant \mathbbm{E}_{\mu_T} [e^f] \leqslant C \]
  holds, with a constant $C$ independent of $T$. In particular $\mu_T$ is
  tight on $\VV^{- 1 / 2 - \varepsilon}$.
\end{corollary}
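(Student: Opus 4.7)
The plan is to derive both bounds from the Boué--Dupuis variational representation, applied to the functional $V^{-f}_T = V_T - f$. Indeed, since $\mathbb{E}_{\mu_T}[e^f] = \mathbb{E}[e^{-V_T(W_T)+f(W_T)}] = \mathbb{E}[e^{-V^{-f}_T(W_T)}]$, Theorem~\ref{eq:th1} together with Theorem~\ref{thm:equicoerc} gives
\[
-\log \mathbb{E}_{\mu_T}[e^f] \;=\; \inf_{u \in \mathbb{H}_a} \mathbb{E}\!\left[ \Psi^{-f}_T(W, I(u)) + \lambda \int (I_T(u))^4\,\mathd x + \tfrac{1}{2}\|l^T(u)\|_{\mathcal{H}}^2 \right],
\]
so it suffices to bound this infimum from above and below by constants independent of $T$.

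For the lower bound on $-\log \mathbb{E}_{\mu_T}[e^f]$ (hence the upper bound on $\mathbb{E}_{\mu_T}[e^f]$), I would use the pointwise estimate from Theorem~\ref{thm:equicoerc}, $|\Psi^{-f}_T(W,I(u))| \le Q_T(W) + \tfrac{1}{4}(\|I_T(u)\|_{L^4}^4 + \|l^T(u)\|_{\mathcal{H}}^2)$, to absorb $\Psi^{-f}_T$ into the strictly positive coercive terms $\lambda \int (I_T(u))^4$ and $\tfrac{1}{2}\|l^T(u)\|_{\mathcal{H}}^2$. After this absorption one obtains (with $\lambda > \tfrac{1}{4}$, which can be arranged by adjusting constants, or simply by using the full $\lambda$ and choosing a smaller fraction to absorb)
\[
\mathbb{E}\!\left[\Psi^{-f}_T + \lambda \int (I_T(u))^4 + \tfrac{1}{2}\|l^T(u)\|_{\mathcal{H}}^2\right] \;\ge\; -\mathbb{E}[Q_T(W)] \;\ge\; -C,
\]
uniformly in $u$ and in $T$ by the uniform bound $\sup_T \mathbb{E}[Q_T(W)] < \infty$.

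For the upper bound on $-\log \mathbb{E}_{\mu_T}[e^f]$ I would test the infimum with the specific choice $u = 0$. Then $I_T(0) = 0$ and $l^T(0) = \lambda \mathbb{1}_{t \le T}\mathbb{W}^{\langle 3\rangle}_t$, whose $\mathcal{H}$-norm is bounded in $L^1(\mathbb{P})$ uniformly in $T$ in view of the regularity of $s \mapsto J_s \mathbb{W}_s^3$ recorded in Table~\ref{table:reg}. Bounding $\Psi^{-f}_T(W,0)$ by the same pointwise estimate yields
\[
-\log \mathbb{E}_{\mu_T}[e^f] \;\le\; \mathbb{E}[Q_T(W)] + \tfrac{5}{4}\,\mathbb{E}\|l^T(0)\|_{\mathcal{H}}^2 \;\le\; C,
\]
again uniformly in $T$. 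Together, these two estimates yield $e^{-C} \le \mathbb{E}_{\mu_T}[e^f] \le e^{C}$, which is the claimed bound.

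Finally, to deduce tightness on $\VV^{-1/2-\varepsilon}$, I would apply the exponential estimate just obtained with $f(\varphi) = \|\varphi\|_{\VV^{-1/2-\varepsilon'}}$ for some $0 < \varepsilon' < \varepsilon$ (a measurable functional of linear growth). The resulting bound $\sup_T \mathbb{E}_{\mu_T}[\exp(\|W_T\|_{\VV^{-1/2-\varepsilon'}})] \le C$, combined with Markov's inequality and the compact embedding $\VV^{-1/2-\varepsilon'} \hookrightarrow \VV^{-1/2-\varepsilon}$, provides the required compact containing sets. The main obstacle is the lower bound on the infimum: one must ensure that the linear-growth perturbation $-f(W_T + I_T(u))$ is genuinely dominated by the quartic $\|I_T(u)\|_{L^4}^4$ and quadratic $\|l^T(u)\|_{\mathcal{H}}^2$ terms uniformly over all admissible drifts $u$, which is precisely what the packaging of Theorem~\ref{thm:equicoerc} (with the bound on $\Psi^{-f}_T$ involving only a fraction of the coercive terms) is designed to deliver.
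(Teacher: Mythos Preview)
Your overall strategy---applying Theorems~\ref{eq:th1} and~\ref{thm:equicoerc} to the functional $V^{-f}_T$ and bounding the resulting variational problem from above and below---is exactly the right one, and your lower bound on the infimum (yielding the upper bound on $\mathbb{E}_{\mu_T}[e^f]$) is correct: the coercive terms absorb $\Psi^{-f}_T$ uniformly in $u$ and $T$. The paper itself does not spell out a proof here but defers to Corollary~1 and Lemma~6 of~\cite{barashkov_gubinelli_variational}, and your argument reproduces that line.

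There is, however, a genuine gap in your upper bound on the infimum. Testing with $u=0$ gives $l^T(0)_t=\lambda\mathbbm{1}_{t\le T}\mathbb{W}^{\langle 3\rangle}_t$, and you claim $\mathbb{E}\|l^T(0)\|_{\mathcal H}^2$ is bounded uniformly in $T$ by appealing to Table~\ref{table:reg}. But the table only records $s\mapsto J_s\mathbb{W}_s^3\in L^2([0,\infty];\VV^{-1/2-})$, which is strictly weaker than membership in $\mathcal H=L^2(\mathbb{R}_+\times\Lambda)$. In fact $\mathbb{E}\int_0^T\|J_s\mathbb{W}_s^3\|_{L^2}^2\,\mathd s$ grows linearly in $T$: this is precisely the divergence exploited in the singularity proof of Section~4 (see the computation $\mathbb{E}[\int_\Lambda(\mathbb{W}_t^{\theta_T,3})^2]\lesssim t^3$ combined with $\|J_t f\|_{L^2}\lesssim t^{-3/2}\|f\|_{L^2}$, which gives an $O(1)$ integrand, and the lower bound of the same order in Lemma~7). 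So $u=0$ does not give a uniform upper bound.

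The fix is to test the infimum with a drift $u$ for which $l^T(u)=0$, i.e.\ the solution of the linear equation $u_t=-\lambda\mathbbm{1}_{t\le T}\mathbb{W}^{\langle 3\rangle}_t-\lambda\mathbbm{1}_{t\le T}J_t(\mathbb{W}_t^2\succ I_t^\flat(u))$. With this choice the cost functional reduces to $\mathbb{E}[\Psi^{-f}_T(W,I(u))+\lambda\|I_T(u)\|_{L^4}^4]$, and now $|\Psi^{-f}_T|\le Q_T(W)+\tfrac14\|I_T(u)\|_{L^4}^4$ together with a uniform bound on $\mathbb{E}\|I_T(u)\|_{L^4}^4$ (which does follow from the $L^2\VV^{-1/2-}$ regularity of $J_s\mathbb{W}_s^3$ and a Gronwall argument for the paraproduct term, as in~\cite{barashkov_gubinelli_variational}) yields the desired $T$-independent constant.
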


\

\section{Construction of the drift measure}

We start now to implement the strategy discussed in the introduction:
construct a shifted measure sufficiently similar to $\Phi^4_3$. Intuitively
the $\Phi^4_3$ measure should give rise to a canonical process which is a
shift of the Gaussian free field with a drift of the form given by
eq.~(\ref{eq:renorm-drift}). Indeed this drift $u$ should be the optimal drift
in the variational formula.

\

A small twist is given by the fact that the relevant Gaussian free field
entering these considerations is not the process $W = W (X)$ but that obtained
from the shifted canonical process $X^u_t = X_t - \int_0^t u_s \mathd s$ which
we denote by
\[ W^u \assign W (X^u) = W - I (u) . \]
Moreover for technical reasons we have to modify the drift in large scales and
add some coercive term which will allow later to prove some useful estimates.
We define the functional
\begin{equation}
  \Xi_s (W, u) \assign - \lambda J_s \mathbbm{W}_s^3 - \lambda \mathbbm{1}_{\{
  s \geqslant \bar{T} \}} J_s (\mathbbm{W}_s^2 \succ I_s^{\flat} (u)) - J_s
  \langle \mathD \rangle^{- 1 / 2} (\llbracket (\langle \mathD \rangle^{- 1 /
  2} W_s)^n \rrbracket), \qquad s \geqslant 0, \label{eq:pre-pre-drift}
\end{equation}
where $\bar{T} > 0, n \in \mathbbm{N}$ are constants which will be fixed later
and where we understand all the Wick renormalizations to be given functions of
$W$. We look now for the solution $u$ of the equation
\begin{equation}
  u = \Xi (W^u, u) = \Xi (W - I (u), u) . \label{eq:pre-drift}
\end{equation}
Expanding the Wick polynomials appearing in $\Xi (W - I (u), u)$ we obtain the
equation
\begin{equation}
  \begin{array}{lll}
    u_s & = & \Xi (W - I (u), u)\\
    & = & - \lambda J_s [\mathbbm{W}_s^3 -\mathbbm{W}_s^2 I_s (u) + 12 W_s
    (I_s (u))^2 - 4 (I_s (u))^3]\\
    &  & - \lambda \mathbbm{1}_{\{ s \geqslant \bar{T} \}} J_s
    [((\mathbbm{W}_s^2 - 24 W_s I_s (u) + 12 (I_s (u))^2)) \succ I_s^{\flat}
    (u)]\\
    &  & - \sum_{i = 0}^n \binom{n}{i} J_s \langle \mathD \rangle^{- 1 / 2}
    [\llbracket (\langle \mathD \rangle^{- 1 / 2} W_s)^i \rrbracket (- \langle
    \mathD \rangle^{- 1 / 2} I_s (u))^{n - i}]
  \end{array} \label{eq:drift}
\end{equation}
for all $s \geqslant 0$. This is an integral equation for $t \mapsto u_t$ with
smooth coefficients depending smoothly on $W$ and can be solved via standard
methods. Since the coefficients are of polynomial growth we must expect
explosion in finite time, so we have to be careful. Note that for any finite
time the process $(u_s)_{s > 0}$ has bounded spectral support. As a
consequence we can solve the equation in $L^2$ and as long as $\int^t_0 \| u
\|^2_{L^2} \mathd s$ is finite we can see from the equation that $\sup_{s
\leqslant t} \| u_s \|^2_{L^2}$ is finite. Therefore by the existence of local
solutions we have that, for all $N \geqslant 0$, the stopping time
\[ \tau_N \assign \inf \left\{ t \geqslant 0 \middle| \int^t_0 \| u \|^2_{L^2}
   \mathd s \geqslant N \right\}, \]
is strictly positive $\mathbbm{P}$-almost surely and $u$ exists up to the
(explosion) time $T_{\exp} \assign \sup_{N \in \mathbbm{N}} \tau_N$. Moreover,
by construction, the process $u_t^N \assign \mathbbm{1}_{\{ t \leqslant \tau_N
\}} u_t$ satisfies Novikov's condition, so it is in $\mathbbm{H}_c$ and by
Girsanov transformation we can define the probability measure on $C \left(
\mathbbm{R}_+, \VV^{- 1 / 2 - \varepsilon} (\Lambda) \right)$ given by
\[ \mathd \mathbbm{Q}^{u^N} \assign e^{\int^{\infty}_0 u_s^N \mathd X_s -
   \frac{1}{2} \int^{\infty}_0 \| u_s^N \|^2_{L^2 (\Lambda)} \mathd s} \mathd
   \mathbbm{P}, \]
and under which $X^{u^N}_t = X_t - \int^t_0 u_s^N \mathd s$ is a cylindrical
Brownian motion. In particular $(W^{u^N}_t)_{t \geqslant 0}$, given by
$\int^t_0 J_s \mathd X^{u^N}_s$ has under $\mathbbm{Q}^{u^N}$ the same law as
$(W_t)_{t \geqslant 0}$ has under $\mathbbm{P}$. Moreover we have that
$W^{u^N}_s = W^u_s$ for $0 \leqslant s \leqslant \tau_N$ and that $u$
satisfies the equation
\begin{equation}
  u_s = - \lambda J_s \mathbbm{W}_s^{u, 3} - \lambda \mathbbm{1}_{\{ s
  \geqslant \bar{T} \}} J_s (\mathbbm{W}_s^{u, 2} \succ I_t^{\flat} (u)) - J_s
  \langle \mathD \rangle^{- 1 / 2} (\llbracket (\langle \mathD \rangle^{- 1 /
  2} W^u_s)^n \rrbracket), \label{eq:drift-uu} \qquad s \in [0, \tau_N],
\end{equation}
where we introduced the notations $\mathbbm{W}_s^{u, 3} \assign 4 \llbracket
(W_s^u)^3 \rrbracket$ and $\mathbbm{W}_s^{u, 2} \assign 12 \llbracket
(W^u_s)^2 \rrbracket$.

\

Note that here the Wick powers are still taken to be given functions of $W$,
i.e we are still taking the Wick ordering with respect to the law of $W$ under
$\mathbbm{P}$ (or the law of $W^{u^N}$ under $\mathbbm{Q}^{u^N}$).

\

If we think of the terms containing $W^u$ as given (that is, we ignore their
dependence on $u$), eq.~(\ref{eq:drift-uu}) is a linear integral equation in
$u$ which can be estimated via Gronwall-type arguments. In order to do so, let
us denote by $U : H \mapsto \hat{u}$ the solution map of the equation
\begin{equation}
  \hat{u} = \Xi (H, \hat{u}) . \label{eq:map-U}
\end{equation}
This last equation is linear and therefore has nice global solutions (let's
say in $C (\mathbbm{R}_+, L^2)$) and by uniqueness and eq.~(\ref{eq:drift-uu})
we have $u_t = U_t (W^u)$ for $t \in [0, T_{\exp})$. From this perspective the
residual dependence on $u$ will not play any role since under the shifted
measure the law of the process $W^u$ does not depend on $u$. By standard
paraproduct estimates we have
\begin{eqnarray*}
  \| I_t (u) \|_{L^{\infty}} & \lesssim & \tilde{H}_t + \int^t_0
  \mathbbm{1}_{\{ s \geqslant \bar{T} \}} \| J^2_s (\mathbbm{W}_s^{u, 2} \succ
  I_s^{\flat} (u)) \|_{L^{\infty}} \mathd s\\
  & \lesssim & \tilde{H}_t +_{} \bar{T}^{- \varepsilon} \int^t_0 \langle s
  \rangle^{- 3 / 2} \| \mathbbm{W}_s^{u, 2} \|_{\VV^{- 1 - \varepsilon}} \|
  I_s^{\flat} (u) \|_{L^{\infty}} \mathd s,
\end{eqnarray*}
where we have used the presence of the cutoff $\mathbbm{1}_{\{ s \geqslant
\bar{T} \}}$ to introduce the small factor $T^{- \varepsilon}$ and we have
employed the notation
\[ \tilde{H}_t = \int^t_0 [\|J^2_s \mathbbm{W}^{u, 3}_s \|_{L^{\infty}} +
   \| J_s \langle \mathD \rangle^{- 1 / 2} (\llbracket (\langle \mathD
   \rangle^{- 1 / 2} W^u_s)^n \rrbracket) \|_{L^{\infty}}] \mathd s \]
\[ \lesssim \int^t_0 \frac{1}{\langle s \rangle^{1 / 2 - \varepsilon}} \|J_s
   \mathbbm{W}^{u, 3}_s \|_{\VV^{- 1 / 2 - \varepsilon}} \mathd s +
   \int^t_0 \frac{1}{\langle s \rangle^{3 / 2}} \| \llbracket (\langle \mathD
   \rangle^{- 1 / 2} W^u_s)^n \rrbracket \|_{H^{- 1 / 2}} \mathd s. \]
Therefore, by Gronwall's lemma
\begin{equation}
  \begin{array}{lll}
    \sup_{t \leqslant \tau_N} \| I_t (u) \|_{L^{\infty}} & \lesssim &
    \tilde{H}_{\tau_N} \exp \left( C \bar{T}^{- \varepsilon} \int^{\tau_N}_0
    \|\mathbbm{W}_s^{u, 2} \|_{\VV^{- 1 - \varepsilon}} \frac{\mathd
    s}{\langle s \rangle^{1 + \varepsilon}} \right) .
  \end{array} \label{boundIu}
\end{equation}
Under $\mathbbm{Q}^{u^N}$, the terms in $\tilde{H}_{\tau_N}$ are in all the
$L^p$ spaces by hypercontractivity and moreover for any $p \geqslant 1$ one
can choose $\bar{T}$ large enough to that also the exponential term is in
$L^p$. Using eq.~(\ref{eq:drift-uu}) it is then not difficult to show that
$\mathbbm{E}_{\mathbbm{Q}^{u^{N_1}}} [\| u^{N_2} \|^p_{\mathcal{H}^{- 1 / 2 -
\varepsilon}}] < \infty$ for any $p > 1$ (again provided we take $\bar{T}$
large enough depending on $p$) as long as $N_1 > N_2$. By the spectral
properties of $J$ and the equation for $u$, the process $t \mapsto
\mathbbm{1}_{\{ t \leqslant T \}} u_t$ is spectrally supported in a ball of
radius $T$, so we get in particular that
\[ \mathbbm{E}_{\mathbbm{Q}^{u^{N_1}}} \left[ \int^{\tau_{N_2} \wedge T}_0 \|
   u_s \|^2_{L^2} \mathd s \right] \lesssim T^{1 + \varepsilon}, \]
uniformly for any choice of $N_1 \geqslant N_2 \geqslant 0$.

\begin{lemma}
  \label{Lemma:changeofvariables}The family $(\mathbbm{Q}^{u^N})_N$ weakly
  converges to a limit $\mathbbm{Q}^u$ on $C \left( \mathbbm{R}_+, \VV^{- 3 /
  2 - \varepsilon} \right)$. Under $\mathbbm{Q}^u$ it holds $T_{\exp} =
  \infty$ almost surely and $\tmop{Law}_{\mathbbm{Q}^u} (X^u) =
  \tmop{Law}_{\mathbbm{P}} (X)$. Moreover for any finite $T$
  \[ \frac{\mathd \mathbbm{Q}^u |_{\CF_T}}{\mathd \mathbbm{P}|_{\CF_T}} = \exp
     \left( \int^T_0 u_s \mathd X_s - \frac{1}{2} \int^T_0 \| u_s \|^2_{L^2}
     \mathd s \right) . \]
\end{lemma}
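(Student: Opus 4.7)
The plan is to construct $\mathbbm{Q}^u$ as the unique consistent extension of the stopped Girsanov measures $\mathbbm{Q}^{u^N}|_{\CF_{\tau_N}}$, derive non-explosion under the limit from the quadratic bound displayed just before the lemma, and read off both weak convergence and the Girsanov density from this construction.

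First, consistency. For $M \leqslant N$ we have $\tau_M \leqslant \tau_N$ and $u^N_s = u_s = u^M_s$ for every $s \leqslant \tau_M$, so the Girsanov exponentials
\[ D^N_t \assign \exp\left( \int_0^t u^N_s \mathd X_s - \tfrac{1}{2} \int_0^t \| u^N_s \|^2_{L^2} \mathd s \right) \]
satisfy $D^N_{\tau_M} = D^M_{\tau_M}$. Since $(D^N_t)_t$ is a true $\mathbbm{P}$-martingale by Novikov (as recalled above), optional sampling gives $\mathbbm{Q}^{u^N}|_{\CF_{\tau_M}} = \mathbbm{Q}^{u^M}|_{\CF_{\tau_M}}$, so the family is consistent on the algebra $\bigcup_N \CF_{\tau_N}$, defining a candidate probability $\mathbbm{Q}^u$ thereon.

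Second, non-explosion and the density formula. Setting $N_1 = N_2 = N$ in the displayed $\mathcal{H}$-bound and applying Markov to $\int_0^{\tau_N \wedge T} \|u_s\|^2_{L^2} \mathd s \geqslant N\, \mathbbm{1}_{\{\tau_N \leqslant T\}}$ gives $\mathbbm{Q}^{u^N}(\tau_N \leqslant T) \lesssim T^{1+\varepsilon}/N$. Since $\{\tau_N \leqslant T\} \in \CF_{\tau_N \wedge T} \subseteq \CF_{\tau_N}$, consistency transfers this to $\mathbbm{Q}^u(\tau_N \leqslant T) \to 0$ for every finite $T$, so $\tau_N \uparrow \infty$ and $T_{\exp} = \infty$ $\mathbbm{Q}^u$-a.s.; in particular the consistent family extends uniquely to a probability measure $\mathbbm{Q}^u$ on $(\Omega, \CF)$. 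For any $A \in \CF_T$, on the event $\{T \leqslant \tau_N\}$ the drifts $u$ and $u^N$ agree, so $D^N_T = D_T$ there, with $D_T$ defined by the same exponential using $u$ (well-posed on $\{T \leqslant T_{\exp}\}$); consistency and monotone convergence then yield
\[ \mathbbm{Q}^u(A) = \lim_{N\to\infty}\mathbbm{Q}^{u^N}(A\cap\{T\leqslant\tau_N\}) = \lim_{N\to\infty}\mathbbm{E}[\mathbbm{1}_A \mathbbm{1}_{\{T\leqslant\tau_N\}} D_T] = \mathbbm{E}[\mathbbm{1}_A D_T \mathbbm{1}_{\{T\leqslant T_{\exp}\}}], \]
which is the claimed Radon--Nikodym derivative on the $\mathbbm{Q}^u$-full-measure set $\{T \leqslant T_{\exp}\}$.

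Finally, weak convergence and law identification. The consistency estimate yields $|\mathbbm{Q}^{u^N}(A) - \mathbbm{Q}^u(A)| \leqslant \mathbbm{Q}^{u^N}(\tau_N < T) + \mathbbm{Q}^u(\tau_N < T) \to 0$ for every $A \in \CF_T$, hence total-variation convergence on each $\CF_T$. Combined with tightness of $(\mathbbm{Q}^{u^N})$ on the Polish space $C(\mathbbm{R}_+; \VV^{-3/2-\varepsilon})$ and the uniqueness of finite-dimensional limits, this yields weak convergence $\mathbbm{Q}^{u^N} \to \mathbbm{Q}^u$. Since $X^{u^N}$ is a cylindrical Brownian motion under $\mathbbm{Q}^{u^N}$ and coincides with $X^u$ on $[0,\tau_N]$, letting $\tau_N \uparrow \infty$ gives $\mathrm{Law}_{\mathbbm{Q}^u}(X^u) = \mathrm{Law}_{\mathbbm{P}}(X)$. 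The delicate point I foresee is the Radon--Nikodym limit: under $\mathbbm{P}$ the explosion time $T_{\exp}$ may be finite with positive probability, so $D_T$ is only defined on $\{T \leqslant T_{\exp}\}$, and the non-explosion step (ensuring this set carries all of $\mathbbm{Q}^u$) is essential to make sense of the density unambiguously.
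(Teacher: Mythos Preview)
Your proposal is correct and follows essentially the same route as the paper: consistency of the stopped Girsanov measures on $\CF_{\tau_N}$, the Markov/quadratic-bound argument for $\mathbbm{Q}^u(\tau_N\leqslant T)\lesssim T^{1+\varepsilon}/N$ to get non-explosion, and then passage to the limit in $\mathbbm{Q}^{u^N}(A\cap\{T\leqslant\tau_N\})=\mathbbm{E}[\mathbbm{1}_{A\cap\{T\leqslant\tau_N\}}D_T]$ to obtain both the density on $\CF_T$ and the law of $X^u$. Your explicit indicator $\mathbbm{1}_{\{T\leqslant T_{\exp}\}}$ in the density formula is in fact a bit more careful than the paper's write-up, which leaves this point implicit; and your flagging of tightness as an ingredient for genuine weak convergence on the path space is appropriate---the paper's proof, like yours, establishes convergence on each $\CF_T$ and constructs the limit measure without separately verifying tightness.
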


\begin{proof}
  Consider the filtration $\left( \CG_N = \CF_{\tau_N} \right)_N$ and observe
  that $\left( \nobracket \mathbbm{Q}^{u^N} |_{\CG_N} \right)_N$ is a
  consistent family of inner regular probability distributions and therefore
  there exists a unique extension $\mathbbm{Q}^u$ to $\CG_{\infty} = \vee_N
  \CG_N$. Next observe that $\{ T_{\exp} < \infty \} = \bigcup_{T \in
  \mathbbm{N}} \{ T_{\exp} < T \} \subset \bigcup_{T \in \mathbbm{N}}
  \bigcap_{N \in \mathbbm{N}} \{ \tau_N < T \}$ and that for any $N, T <
  \infty$, we have
  \[ \mathbbm{E}_{\mathbbm{Q}^u} \left[ \int^{\tau_N \wedge T}_0 \| u_s
     \|^2_{L^2} \mathd s \right] =\mathbbm{E}_{\mathbbm{Q}^{u^N}} \left[
     \int^{\tau_N \wedge T}_0 \| u_s \|^2_{L^2} \mathd s \right] \lesssim T^{1
     + \varepsilon} . \]
  On the event $\{ \tau_N \leqslant T \}$ we have
  \[ \int^{\tau_N \wedge T}_0 \| u_s \|^2_{L^2} \mathd s = N, \]
  and therefore we also have \ $\mathbbm{Q}^u (\{ \tau_N \leqslant T \})
  \leqslant C T^{1 + \varepsilon} N^{- 1}$ which in turn implies
  $\mathbbm{Q}^u (T_{\exp} < T) = 0$. This proves that $T_{\exp} = + \infty$
  under $\mathbbm{Q}^u$, almost surely. As a consequence we can extend
  $\mathbbm{Q}^u$ to all of $\CF = \vee_T \CF_T$ since for any $A \in \CF_T$
  we can set
  \[ \mathbbm{Q}^u (A) =\mathbbm{Q}^u (A \cap \{ T_{\exp} = + \infty \}) =
     \lim_N \mathbbm{Q}^u (A \cap \{ T_{\exp} = + \infty, \tau_N \geqslant T
     \}) = \lim_N \mathbbm{Q}^{u^N} (A \cap \{ \tau_N \geqslant T \}) . \]
  If $A \in \CF_T$ then
  \[ \begin{array}{lll}
       \mathbbm{E}_{\mathbbm{Q}^u} [\mathbbm{1}_A (X^u)] & = & \lim_{N
       \rightarrow \infty} \mathbbm{E}_{\mathbbm{Q}^u} [\mathbbm{1}_{A \cap \{
       T \leqslant \tau_N \}} (X^u)] = \lim_{N \rightarrow \infty}
       \mathbbm{E}_{\mathbbm{Q}^{u^N}} [\mathbbm{1}_{A \cap \{ T \leqslant
       \tau_N \}} (X^{u^N})]\\
       & = & \lim_{N \rightarrow \infty} \mathbbm{E}_{\mathbbm{P}}
       [\mathbbm{1}_{A \cap \{ T \leqslant \tau_N \}} (X)]
     \end{array} \]
  and
  \[ \lim_{N \rightarrow \infty} \mathbbm{E}_{\mathbbm{P}} [\mathbbm{1}_{\{ T
     > \tau_N \}} (X)] = \lim_{N \rightarrow \infty}
     \mathbbm{E}_{\mathbbm{Q}^{u^N}} [\mathbbm{1}_{\{ T > \tau_N \}}
     (X^{u^N})] = \lim_{N \rightarrow \infty} \mathbbm{E}_{\mathbbm{Q}^u}
     [\mathbbm{1}_{\{ T > \tau_N \}} (X^u)] \rightarrow 0. \]
  This establishes that $\tmop{Law}_{\mathbbm{Q}^u} (X^u) =
  \tmop{Law}_{\mathbbm{P}} (X)$. On the other hand if $A \in \CF_T$ we have,
  using the martingale property of the Girsanov density,
  \[ \mathbbm{E}_{\mathbbm{Q}^u} [\mathbbm{1}_A] = \lim_{N \rightarrow \infty}
     \mathbbm{E}_{\mathbbm{Q}^u} [\mathbbm{1}_{A \cap \{ T \leqslant \tau_N
     \}}] = \lim_{N \rightarrow \infty} \mathbbm{E}_{\mathbbm{Q}^{u^N}}
     [\mathbbm{1}_{A \cap \{ T \leqslant \tau_N \}}] \]
  \[ = \lim_{N \rightarrow \infty} \mathbbm{E} \left[ \mathbbm{1}_{A \cap \{ T
     \leqslant \tau_N \}} e^{\int^{\tau_N}_0 u_s \mathd X_s - \frac{1}{2}
     \int^{\tau_N}_0 \| u_s \|^2_{L^2} \mathd s} \right] \]
  \[ = \lim_{N \rightarrow \infty} \mathbbm{E} \left[ \mathbbm{1}_{A \cap \{ T
     \leqslant \tau_N \}} e^{\int^T_0 u_s \mathd X_s - \frac{1}{2} \int^T_0 \|
     u_s \|^2_{L^2} \mathd s} \right] . \]
  And also
  \[ \lim_{N \rightarrow \infty} \mathbbm{E} \left[ \mathbbm{1}_{A \cap \{ T >
     \tau_N \}} e^{\int^T_0 u_s \mathd X_s - \frac{1}{2} \int^T_0 \| u_s
     \|^2_{L^2} \mathd s} \right] \leqslant \lim_{N \rightarrow \infty}
     \mathbbm{E} \left[ \mathbbm{1}_{\{ T > \tau_N \}} e^{\int^T_0 u_s \mathd
     X_s - \frac{1}{2} \int^T_0 \| u_s \|^2_{L^2} \mathd s} \right] \]
  \[ = \lim_{N \rightarrow \infty} \mathbbm{E} \left[ \mathbbm{1}_{\{ T >
     \tau_N \}} e^{\int^{\tau_N}_0 u_s \mathd X_s - \frac{1}{2}
     \int^{\tau_N}_0 \| u_s \|^2_{L^2} \mathd s} \right] = \lim_{N \rightarrow
     \infty} \mathbbm{E}_{\mathbbm{Q}^{u^N}} [\mathbbm{1}_{\{ T > \tau_N \}}]
     = 0. \]
  As a consequence
  \[ \mathbbm{E}_{\mathbbm{Q}^u} [\mathbbm{1}_A] =\mathbbm{E} \left[
     \mathbbm{1}_A e^{\int^T_0 u_s \mathd X_s - \frac{1}{2} \int^T_0 \| u_s
     \|^2_{L^2} \mathd s} \right] \]
  and therefore
  \[ \frac{\mathd \mathbbm{Q}^u |_{\CF_T}}{\mathd \mathbbm{P}|_{\CF_T}} =
     e^{\int^T_0 u_s \mathd X_s - \frac{1}{2} \int^T_0 \| u_s \|^2_{L^2}
     \mathd s}, \]
  as claimed.
\end{proof}

The following lemma will also be useful in the sequel and it is a consequence
of the above discussion:

\begin{lemma}
  For any $p > 1$ there exists a suitable choice of $\bar{T}$ such that
  \[ \mathbbm{E}_{\mathbbm{Q}^u} [\sup_{t \geqslant 0} \| I_t (u)
     \|^p_{L^{\infty}}] < \infty . \]
\end{lemma}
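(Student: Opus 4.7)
The plan is to integrate the Gronwall estimate~(\ref{boundIu}) already at hand and convert it into an $L^p$ bound under $\mathbbm{Q}^u$. For fixed $N$, I would raise~(\ref{boundIu}) to the $p$-th power and split the right-hand side by Cauchy--Schwarz into an $L^{p q'}$ moment of $\tilde{H}_{\tau_N}$ and an $L^q$ moment of the exponential
\[ E_N \assign \exp\left( C \bar{T}^{-\varepsilon} \int_0^{\tau_N} \|\mathbbm{W}_s^{u,2}\|_{\VV^{-1-\varepsilon}} \frac{\mathd s}{\langle s\rangle^{1+\varepsilon}}\right). \]
By Lemma~\ref{Lemma:changeofvariables}, under $\mathbbm{Q}^{u^N}$ the shifted field $W^{u^N}$ has the same law as $W$ under $\mathbbm{P}$; since all the Wick powers inside $\tilde{H}$ and $E_N$ are deterministic polynomials of $W^{u^N}$, their distributions (and hence their moments) coincide with those of the stochastic objects in Table~\ref{table:reg}.

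The moments of $\tilde{H}_{\tau_N}$ are easy to control: by Minkowski's inequality and the integrability of the weights $\langle s\rangle^{-1+\varepsilon}$ and $\langle s\rangle^{-3/2}$ against the $L^2$-in-time norms of $J_s\mathbbm{W}^{u,3}_s$ and the Wick powers $\llbracket(\langle\mathD\rangle^{-1/2}W^u_s)^n\rrbracket$ (together with hypercontractivity in a fixed chaos), one gets $\sup_N\mathbbm{E}_{\mathbbm{Q}^{u^N}}[\tilde{H}_{\tau_N}^{p q'}]<\infty$ for any exponent. The delicate piece is the exponential moment. Setting $Z\assign\int_0^\infty\|\mathbbm{W}^{2}_s\|_{\VV^{-1-\varepsilon}}\langle s\rangle^{-1-\varepsilon}\mathd s$, Nelson's hypercontractivity bound for the second Wiener chaos gives $\mathbbm{E}[\|\mathbbm{W}^{2}_s\|_{\VV^{-1-\varepsilon}}^m]^{1/m}\lesssim m$ uniformly in $s$, so by Minkowski $\mathbbm{E}[Z^m]^{1/m}\lesssim m$. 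Expanding $\mathbbm{E}[\exp(\lambda Z)]$ as a power series and applying Stirling yields a finite value for all $\lambda$ below an absolute threshold $\lambda_0>0$. Choosing $\bar{T}$ large enough that $qpC\bar{T}^{-\varepsilon}<\lambda_0$, the $L^q$ moment of $E_N$ is therefore bounded uniformly in $N$.

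To remove the stopping time, I would use that $\tau_N\uparrow\infty$ almost surely under $\mathbbm{Q}^u$ (as established in the proof of Lemma~\ref{Lemma:changeofvariables}) and that $\mathbbm{Q}^u=\mathbbm{Q}^{u^N}$ on $\CG_N=\CF_{\tau_N}$, so the quantity $\sup_{t\leqslant\tau_N}\|I_t(u)\|_{L^\infty}^p$ has the same expectation under the two measures. Letting $N\to\infty$ and invoking Fatou's lemma yields the asserted bound on $\mathbbm{E}_{\mathbbm{Q}^u}[\sup_{t\geqslant 0}\|I_t(u)\|_{L^\infty}^p]$.

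The main obstacle is the sub-exponential tail estimate for $Z$: it hinges on the fact that $\mathbbm{W}^2$ lives in a fixed (second) Wiener chaos, giving linear growth of its $L^m$-norms, and on the strict integrability of the weight $\langle s\rangle^{-1-\varepsilon}$ inherited from the extra $\bar{T}^{-\varepsilon}$ factor coming from the cutoff $\mathbbm{1}_{\{s\geqslant\bar{T}\}}$ in the definition of $\Xi$. Without this cutoff, no choice of $\bar{T}$ would render $\lambda$ arbitrarily small, and the exponential moment would fail for large $p$.
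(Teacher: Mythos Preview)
Your proposal is correct and follows exactly the route the paper intends: the paper's own proof is a single sentence pointing to the Gronwall bound~(\ref{boundIu}) and the remark immediately preceding the lemma that, under $\mathbbm{Q}^{u^N}$, the factor $\tilde{H}_{\tau_N}$ has all moments by hypercontractivity and the exponential factor has $L^p$ moments once $\bar{T}$ is taken large. You have simply unpacked these two assertions---the sub-exponential tail of $Z$ via second-chaos moment growth and Stirling, and the passage from $\mathbbm{Q}^{u^N}$ to $\mathbbm{Q}^u$ via $\tau_N\uparrow\infty$ and Fatou---so there is no methodological difference to report.
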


\begin{proof}
  This follows from the bound~(\ref{boundIu}), after choosing $\bar{T}$ large
  enough. 
\end{proof}

\subsection{Proof of absolute continuity}

In this section we prove that the measure $\mu_T$ is absolutely continuous
with respect to the measure $\mathbbm{Q}^u$ we constructed in
Lemma~\ref{Lemma:changeofvariables}. First recall that the measures $\mu_T$
defined on $\Omega$ as
\[ \frac{\mathd \mu_T}{\mathd \mathbbm{P}} = e^{- V_T (W_T)} \]
can be described, using Lemma~\ref{Lemma:changeofvariables}, as a perturbation
of $\mathbbm{Q}^u$ with density $D_T$ given by
\[ D_T \assign \left. \frac{\mathd \mu_T}{\mathd \mathbbm{Q}^u}
   \right|_{\CF_T} = \left. \frac{\mathd \mu_T}{\mathd \mathbbm{P}}
   \right|_{\CF_T} \left. \frac{\mathd \mathbbm{P}}{\mathd \mathbbm{Q}^u}
   \right|_{\CF_T} = e^{- V_T (W_T) - \int^T_0 u \mathd X + \frac{1}{2}
   \int^T_0 \| u_t \|^2_{L^2} \mathd t}, \]
at least on $\CF_T$.

\begin{lemma}
  \label{lemma:localestimate}There exists a $p > 1$, such that for any $K >
  0$,
  \[ \sup_T \mathbbm{E}_{\mathbbm{Q}^u} \left[ | D_T |^p \mathbbm{1}_{\left\{
     \| W_{\infty} \|_{\VV^{- 1 / 2 - \varepsilon}} \leqslant K \right\}}
     \right] < \infty . \]
  in particular, the family $(D_T)_T$ is uniformly integrable under
  $\mathbbm{Q}^u$.
\end{lemma}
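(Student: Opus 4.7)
The strategy is to prove the stronger uniform bound $\sup_T \mathbbm{E}_{\mathbbm{Q}^u}[|D_T|^p \mathbbm{1}_A] < \infty$ for the event $A \assign \{\|W_\infty\|_{\VV^{-1/2-\varepsilon}} \leq K\}$, which immediately gives the claimed uniform integrability. The idea is to combine a pathwise upper bound for $\log D_T$ derived from the coercive decomposition of Theorem~\ref{thm:equicoerc} with a Girsanov change of measure handling the local-martingale part of the log-density.

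First I rewrite $\log D_T$: since under $\mathbbm{Q}^u$ the process $X^u = X - \int_0^\cdot u_s\,\mathd s$ is a cylindrical Brownian motion (Lemma~\ref{Lemma:changeofvariables}), substituting $\mathd X = \mathd X^u + u\,\mathd s$ yields
\[
\log D_T = -\bigl[V_T(W_T) + \tfrac{1}{2}\!\int_0^T \|u_s\|^2_{L^2}\,\mathd s\bigr] + M_T, \qquad M_T \assign -\!\int_0^T u_s\,\mathd X^u_s,
\]
where $M$ is a $\mathbbm{Q}^u$-local martingale with $\langle M\rangle_T = \int_0^T \|u\|^2\,\mathd s$. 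Since under $\mathbbm{Q}^u$ the field $W^u$ has the same law as $W$ under $\mathbbm{P}$, Theorem~\ref{thm:equicoerc} applies to the bracketed quantity and produces
\[
V_T(W_T) + \tfrac{1}{2}\!\int_0^T \|u\|^2\,\mathd s = \Psi_T(W^u, I(u)) + \lambda\!\int (I_T(u))^4 + \tfrac{1}{2}\|l^{u,T}(u)\|^2_{\mathcal{H}},
\]
where $l^{u,T}$ denotes the natural analogue of $l^T$ built from Wick powers of $W^u$. Combining with $|\Psi_T|\leq Q_T(W^u)+\tfrac{1}{4}\bigl(\|I_T(u)\|^4_{L^4}+\|l^{u,T}(u)\|^2_{\mathcal{H}}\bigr)$ gives the pathwise inequality
\[
\log D_T \leq Q_T(W^u) - c_1\|I_T(u)\|^4_{L^4} - c_2\|l^{u,T}(u)\|^2_{\mathcal{H}} + M_T, \qquad c_1,c_2>0.
\]

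For $p>1$ (to be chosen close to $1$) I split $pM_T = \log \mathcal{E}(pM)_T + \tfrac{p^2}{2}\langle M\rangle_T$. Novikov's condition for $\mathcal{E}(pM)$ follows from the drift estimates established in the proof of Lemma~\ref{Lemma:changeofvariables}, so $\mathcal{E}(pM)$ is a true $\mathbbm{Q}^u$-martingale and serves as the Radon--Nikodym density of the shifted measure $\mathbbm{Q}^{-(p-1)u}$ with respect to $\mathbbm{Q}^u$. After Girsanov this yields
\[
\mathbbm{E}_{\mathbbm{Q}^u}\bigl[D_T^p\,\mathbbm{1}_A\bigr]\ \leq\ \mathbbm{E}_{\mathbbm{Q}^{-(p-1)u}}\!\Bigl[\,e^{pQ_T(W^u)-pc_1\|I_T(u)\|^4_{L^4}-pc_2\|l^{u,T}(u)\|^2_{\mathcal{H}}+\frac{p^2}{2}\!\int_0^T\|u\|^2\mathd s}\mathbbm{1}_A\Bigr].
\]

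The crux is to absorb the positive exponent $\tfrac{p^2}{2}\!\int\|u\|^2$ into the coercive $-pc_2\|l^{u,T}\|^2_{\mathcal{H}}$. Comparing eqs.~(\ref{eq:drift-uu}) and (\ref{eq:renorm-drift}), on $[\bar T,T]$ the quantity $l^{u,T}_t(u)$ collapses to the bounded log-correction $-J_t\langle\mathD\rangle^{-1/2}\llbracket(\langle\mathD\rangle^{-1/2}W^u_t)^n\rrbracket$ from (\ref{eq:pre-pre-drift}); consequently $\|u_s\|^2_{L^2}$ is pointwise bounded by $\|l^{u,T}_s(u)\|^2_{L^2}$ plus polynomial expressions in Wick powers of $W^u$ and in $I_s(u)$, all of which lie in every $L^p$ by hypercontractivity and by the previous lemma bounding $\|I_\infty(u)\|_{L^\infty}$. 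Taking $p-1>0$ sufficiently small (depending on $\bar T$) keeps the net coefficient of $\|l^{u,T}\|^2$ negative. Finally, on $A$ the bound $\|W_\infty\|_{\VV^{-1/2-\varepsilon}}\leq K$ combined with the previous lemma traps $\|W^u_\infty\|_{\VV^{-1/2-\varepsilon}}$ in a $K$-dependent ball, making $e^{pQ_T(W^u)}$ uniformly integrable under $\mathbbm{Q}^{-(p-1)u}$. The main obstacle is precisely this absorption step: one must track the $t$-scaling of every Wick-power contribution, the role of the cutoff $\mathbbm{1}_{\{s\geq\bar T\}}$ inside the drift equation, and the effect of the Girsanov shift (which, for $p$ close to $1$, is a small perturbation of $\mathbbm{Q}^u$) on all these quantities.
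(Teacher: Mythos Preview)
Your approach diverges from the paper's: you try a direct Girsanov/exponential-martingale manipulation, whereas the paper passes through the Bou\'e--Dupuis formula. Unfortunately the absorption step you outline cannot work. The quantity $\int_0^T\|u_s\|^2_{L^2}\,\mathd s$ is \emph{not} bounded uniformly in $T$; indeed its leading contribution is $\lambda^2\int_0^T\|J_s\mathbbm{W}^{u,3}_s\|^2_{L^2}\,\mathd s$, whose expectation grows linearly in $T$ (this is exactly the computation behind the singularity proof in Section~4, and is the reason $u\notin\mathcal H$ and $\mathbbm{Q}^u\perp\mathbbm{P}$ on $\CF_\infty$). On the other hand your ``coercive'' terms $-c_1\|I_T(u)\|^4_{L^4}$ and $-c_2\|l^{u,T}(u)\|^2_{\mathcal H}$ are uniformly bounded: by your own observation $l^{u,T}_s(u)$ collapses on $[\bar T,T]$ to the auxiliary term, and the previous lemma gives $\sup_t\|I_t(u)\|_{L^\infty}\in L^p(\mathbbm{Q}^u)$. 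So the exponent $\tfrac{p^2}{2}\int_0^T\|u\|^2 - pc_2\|l^{u,T}\|^2 - pc_1\|I_T(u)\|^4$ goes to $+\infty$ for any fixed $p>1$, and no choice of $p$ close to $1$ rescues this. For the same reason Novikov's condition for $\mathcal E(pM)$ fails: $\exp\bigl(\tfrac12\int_0^T\|u\|^2\bigr)$ involves the exponential of a sixth-order Wiener chaos in $W^u$, which has no exponential moments. A secondary issue is that Theorem~\ref{thm:equicoerc} is an identity \emph{in expectation}; the pointwise version you invoke would carry extra martingale terms.

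The paper's route via Bou\'e--Dupuis sidesteps all of this. After bounding $\mathbbm{1}_A\lesssim e^{-\|W_\infty\|^n_{\VV^{-1/2-\varepsilon}}}$ and passing to $\mathbbm P$ via $\tmop{Law}_{\mathbbm{Q}^u}(W^u)=\tmop{Law}_{\mathbbm P}(W)$, the BD formula yields an infimum over drifts $w$, and the dangerous term is $\tfrac{1-p}{2}\int_0^T\|w\|^2$ with $w$ a \emph{generic} variational drift---not the specific $u$. The change of variables $h^w=w+U(W+I(w))$ gives $l^T(h^w)=w+r^w$, and a Gronwall argument (Lemma~\ref{lemma:driftgronwallestimate}) bounds $\int\|w\|^2$ by $\int\|w+r^w\|^2$ plus the localization term $\|I^\flat_T(h^w)\|^n_{\VV^{-1/2-\varepsilon}}$, the latter being controlled precisely by the inserted weight $\|W_\infty+I_\infty(h^w)\|^n$. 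This is where the auxiliary $n$-th power in the drift~\eqref{eq:pre-pre-drift} and the localization on $A$ are genuinely used.
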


\begin{proof}
  The proof of the first claim is given in Section~\ref{sec:lp-bounds} below.
  For the second claim fix $\varepsilon > 0$. Our aim is to show that there
  there exists $\delta > 0$ such that $\mathbbm{Q}^u (A) < \delta$ implies
  $\int_A D_T \mathd \mathbbm{Q}^u < \varepsilon$. From corollary
  \ref{cor:tightness} for any $\varepsilon > 0$ there exists a $K > 0$ such
  that
  \[ \varepsilon / 2 > \mu_T \left( \left\{ \| W_{\infty} \|_{\VV^{- 1 / 2 -
     \varepsilon}} \geqslant K \right\} \right) = \int_{\left\{ \| W_{\infty}
     \|_{\VV^{- 1 / 2 - \varepsilon}} \geqslant K \right\}} D_T \mathd
     \mathbbm{Q}^u . \]
  Then for any $A \in \CF$ such that $\mathbbm{Q}^u (A)^{(p - 1) / p} <
  \varepsilon / \left( 2 \sup_T \mathbbm{E}_{\mathbbm{Q}^u} \left[ | D_T |^p
  \mathbbm{1}_{\left\{ \| W_{\infty} \|_{\VV^{- 1 / 2 - \varepsilon}}
  \leqslant K \right\}} \right] \right)$
  \begin{eqnarray*}
    &  & \int_A D_T \mathd \mathbbm{Q}^u\\
    & = & \int_{A \cap \left\{ \| W_{\infty} \|_{\VV^{- 1 / 2 - \varepsilon}}
    \geqslant K \right\}} D_T \mathd \mathbbm{Q}^u + \int_{A \cap \left\{ \|
    W_{\infty} \|_{\VV^{- 1 / 2 - \varepsilon}} \leqslant K \right\}} D_T
    \mathd \mathbbm{Q}^u\\
    & \leqslant & \varepsilon / 2 + \sup_T \mathbbm{E}_{\mathbbm{Q}^u} \left[
    | D_T |^p \mathbbm{1}_{\left\{ \| W_{\infty} \|_{\VV^{- 1 / 2 -
    \varepsilon}} \leqslant K \right\}} \right] \mathbbm{Q}^u (A)^{(p - 1) /
    p}\\
    & \leqslant & \varepsilon
  \end{eqnarray*}
\end{proof}

\begin{corollary}
  The family of measures $(\mu_T)_{T \geqslant 0}$ is sequentially compact
  w.r.t. strong convergence on $\left( \Omega, \CF \right)$. Furthermore any
  accumulation point is absolutely continuous with respect to $\mathbbm{Q}^u$.
\end{corollary}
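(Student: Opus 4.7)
The plan is to combine the uniform integrability of $(D_T)_T$ from Lemma~\ref{lemma:localestimate} with the Dunford--Pettis characterization of weak sequential compactness in $L^1$.

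The first step is to extract a weakly convergent subsequence of densities. Since by Lemma~\ref{lemma:localestimate} the family $(D_T)_T$ is uniformly integrable with respect to $\mathbbm{Q}^u$, the Dunford--Pettis theorem yields, for any sequence $T_n \to \infty$, a subsequence $(T_{n_k})_k$ and a nonnegative $D_\infty \in L^1(\mathbbm{Q}^u)$ with $D_{T_{n_k}} \to D_\infty$ weakly in $L^1(\mathbbm{Q}^u)$.

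Next I would define the candidate accumulation point $\mu_\infty$ on $(\Omega, \CF)$ by $\mathd \mu_\infty/\mathd \mathbbm{Q}^u := D_\infty$, which immediately takes care of the absolute continuity claim. To verify the convergence $\mu_{T_{n_k}} \to \mu_\infty$, fix any event $A \in \CF_S$ with $S$ finite. For all $k$ large enough that $T_{n_k} \geq S$ we have $A \in \CF_{T_{n_k}}$, so the formula for $D_T$ recorded just before Lemma~\ref{lemma:localestimate} together with Lemma~\ref{Lemma:changeofvariables} give $\mu_{T_{n_k}}(A) = \mathbbm{E}_{\mathbbm{Q}^u}[D_{T_{n_k}} \mathbbm{1}_A]$. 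Weak convergence in $L^1(\mathbbm{Q}^u)$ tested against the bounded function $\mathbbm{1}_A$ then yields $\mu_{T_{n_k}}(A) \to \mathbbm{E}_{\mathbbm{Q}^u}[D_\infty \mathbbm{1}_A] = \mu_\infty(A)$. Hence setwise convergence holds on the generating algebra $\bigcup_S \CF_S$.

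The delicate point, and the main obstacle I anticipate, is upgrading this setwise convergence on $\bigcup_S \CF_S$ to setwise convergence on the full Borel $\sigma$-algebra $\CF = \bigvee_S \CF_S$. The standard route is via Vitali--Hahn--Saks: the uniform integrability of the densities implies uniform absolute continuity of $(\mu_{T_{n_k}})_k$ with respect to $\mathbbm{Q}^u$, which together with pointwise convergence on the generating algebra forces countable additivity of the limit and promotes the convergence to all of $\CF$. With this extension in hand, $\mu_\infty$ is a genuine accumulation point of $(\mu_T)_T$ and is absolutely continuous with respect to $\mathbbm{Q}^u$ by construction.
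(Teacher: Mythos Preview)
Your argument coincides with the paper's through the Dunford--Pettis extraction of a weak $L^1(\mathbbm{Q}^u)$ limit $D_\infty$ and the verification of $\mu_{T_{n_k}}(A)\to\int_A D_\infty\,\mathd\mathbbm{Q}^u$ for $A\in\CF_S$; the paper then simply asserts that this suffices because $\bigcup_S\CF_S$ generates $\CF$, without further justification.

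Your attempt to supply that justification via Vitali--Hahn--Saks contains a subtle gap. Uniform integrability of $(D_T)_T$ yields uniform $\mathbbm{Q}^u$-absolute continuity of the measures $D_T\,\mathd\mathbbm{Q}^u$ on $(\Omega,\CF)$, but these measures agree with $\mu_T$ only on $\CF_T$, not on all of $\CF$: the formula $D_T=\left.\tfrac{\mathd\mu_T}{\mathd\mathbbm{Q}^u}\right|_{\CF_T}$ is explicitly a restricted Radon--Nikodym derivative. On the full $\sigma$-algebra each $\mu_T$ is equivalent to $\mathbbm{P}$ (its density $e^{-V_T(W_T)}$ is strictly positive), and the paper itself later proves that $\mathbbm{P}$ and $\mathbbm{Q}^u$ are mutually singular on $\CF$. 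Hence every $\mu_T$ is in fact \emph{singular} with respect to $\mathbbm{Q}^u$ on $\CF$, and no uniform absolute continuity is available. Concretely, if $A\in\CF$ satisfies $\mathbbm{P}(A)=1$ and $\mathbbm{Q}^u(A)=0$, then $\mu_T(A)=\mu_T(\Omega)\not\to 0=\int_A D_\infty\,\mathd\mathbbm{Q}^u$.

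So neither your argument nor the paper's literally establishes setwise convergence on all of $\CF$; both establish it on the algebra $\bigcup_S\CF_S$, which is what the subsequent identification~\eqref{eq:Phi4def} actually requires (since events of the form $\{W_T\in A\}$ lie in $\CF_T$). Your instinct that the extension step is delicate was correct---it is in fact impossible as stated.
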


\begin{proof}
  We choose a subsequence (not relabeled) such that $D_T \rightarrow
  D_{\infty}$ weakly in $L^1 (\mathbbm{Q}^u)$, for some $D_{\infty} \in L^1
  (\mathbbm{Q}^u)$. It always exists by uniform integrability. We now claim
  that for any $A \in \CF$
  \[ \lim_{T \rightarrow \infty} \mu_T (A) = \int_A D_{\infty} \mathd
     \mathbbm{Q}^u_{} . \]
  It is enough to check this for $A \in \CF_S$ for any $S \in \mathbbm{R}_+$
  since these generate $\CF$. But there we have for $T \geqslant S$,
  \[ \mu_T (A) = \int_A D_T \mathd \mathbbm{Q}^u_{} \rightarrow \int_A
     D_{\infty} \mathd \mathbbm{Q}^u_{} \]
  by weak $L^1$ convergence. \ 
\end{proof}

Recall that the $\Phi_3^4$ measure can be defined as a weak limit of the
measures $\tilde{\mu}_T$ on $\VV^{- 1 / 2 - \varepsilon}$ given by
\[ \int f (\varphi) \tilde{\mu}_T (\mathd \varphi) = \int f (\varphi) e^{-
   V_T (\varphi)} \theta_T (\mathd \varphi) =\mathbbm{E}_{\mathbbm{P}} [f
   (W_T) e^{- V_T (W_T)}] \]
where $\theta_T$ is the gaussian measure with covariance $\rho^2_T (\mathD)
\langle \mathD \rangle^{- 2}$. From this together with the above
considerations we see that any accumulation point $\tilde{\mu}_{\infty}$ of
$\tilde{\mu}_T$ satisfies
\begin{equation}
  \tilde{\mu}_{\infty} (A) =\mathbbm{E}_{\mathbbm{Q}^u} [\mathbbm{1}_A
  (W_{\infty}) D_{\infty}] \label{eq:Phi4def}
\end{equation}
for some $D_{\infty} \in L^1 (\mathbbm{Q}^u)$.

\subsection{$L^p$ bounds}\label{sec:lp-bounds}

Now we will prove local $L^p$-bounds on the density $D_T$. In the sequel we
will denote $\tilde{W} = W^u$, with $u$ satisfying (\ref{eq:drift}), namely $u
= U (\tilde{W})$. Before we proceed let us study how the functional $U
(\tilde{W})$ behaves under shifts of $\tilde{W}$, since later we will want to
apply the Bou{\'e}--Dupuis formula and this kind of behavior will be crucial.
Let $w \in L^2 ([0, \infty) \times \Lambda)$ and denote
\[ u^w \assign U (\tilde{W} + I (w)) \quad \text{and} \qquad h^w \assign U
   (\tilde{W} + I (w)) + w = u^w + w. \]
The process $h^w$ satisfies
\[ h^w - w = u^w = \Xi (\tilde{W} + I (w), u^w) . \]
More explicitly, for all $s \geqslant 0$ we have
\[ \begin{array}{lll}
     h^w_s - w_s & = & - 4 \lambda J_s \llbracket \tilde{W}_s^3 \rrbracket -
     12 \lambda J_s \llbracket \tilde{W}_s^2 \rrbracket I_s (w) - 12 \lambda
     J_s \tilde{W}_s (I_s (w))^2 - 4 \lambda J_s (I_s (w))^3\\
     &  & - 12 \lambda \mathbbm{1}_{\{ s \geqslant \bar{T} \}} J_s
     (\llbracket \tilde{W}^2_s \rrbracket \succ_{} I_s^{\flat} (u^w)) - 24
     \lambda \mathbbm{1}_{\{ s \geqslant \bar{T} \}} (J_s (\tilde{W}_s I_s (w)
     \succ I^{\flat}_s (u^w)))\\
     &  & - 12 \lambda \mathbbm{1}_{\{ s \geqslant \bar{T} \}} J_s ((I_s
     (w))^2 \succ I^{\flat}_s (u^w))\\
     &  & - \sum_{i = 0}^n \binom{n}{i} J_s \llbracket (\langle \mathD
     \rangle^{- 1 / 2} \tilde{W}_s)^i \rrbracket (\langle \mathD \rangle^{- 1
     / 2} I_s (w))^{n - i} .
   \end{array} \]
Decomposing
\[ \llbracket \tilde{W}_s^2 \rrbracket I_s (w) = \llbracket \tilde{W}_s^2
   \rrbracket \succ \theta_s I_s (w) + \llbracket \tilde{W}_s^2 \rrbracket
   \succ (1 - \theta_s) I_s (w) + \llbracket \tilde{W}_s^2 \rrbracket \circ
   I_s (w) + \llbracket \tilde{W}_s^2 \rrbracket \prec I_s (w), \]
we can write
\begin{equation}
  u^w = U (\tilde{W} + I (w)) = - 4 \lambda J_s \llbracket \tilde{W}_s^3
  \rrbracket - 12 \lambda J_s (\llbracket \tilde{W}_s^2 \rrbracket \succ
  I_s^{\flat} (h^w)) + r_s^w, \label{eq:uw}
\end{equation}
with
\begin{equation}
  \begin{array}{lll}
    r_s^w & = & - 12 \lambda J_s \llbracket \tilde{W}_s^2 \rrbracket \succ (1
    - \theta_s) I_s (w) - 12 \lambda J_s (\llbracket \tilde{W}_s^2 \rrbracket
    \circ I_s (w)) - 12 \lambda J_s \llbracket \tilde{W}_s^2 \rrbracket \prec
    I_s (w)\\
    &  & - 12 \lambda J_s \tilde{W}_s (I_s (w))^2 - 4 \lambda J_s (I_s (w))^3
    - 24 \lambda \mathbbm{1}_{\{ s \geqslant \bar{T} \}} (J_s (\tilde{W}_s I_s
    (w) \succ \theta_s I^{\flat}_s (u^w)))\\
    &  & - 12 \lambda \mathbbm{1}_{\{ s \geqslant \bar{T} \}} J_s ((I_s
    (w))^2 \succ I^{\flat}_s (u^w)) + 12 \lambda \mathbbm{1}_{\{ s < \bar{T}
    \}} J_s (\llbracket \tilde{W}_s^2 \rrbracket \succ I_s^{\flat} (u^w))\\
    &  & - \sum_{i = 0}^n \binom{n}{i} J_s \langle \mathD \rangle^{- 1 / 2}
    [\llbracket (\langle \mathD \rangle^{- 1 / 2} W_s)^i \rrbracket (\langle
    \mathD \rangle^{- 1 / 2} I_s (w))^{n - i}_{}] .
  \end{array} \label{eq:rw}
\end{equation}
The first two terms in~(\ref{eq:uw}) will be used for renormalization while
the remainder $r^w$ contains terms of higher regularity which will have to be
estimated in the sequel.

\begin{proof*}{Proof of Lemma~\ref{lemma:localestimate}}
  Observe that
  \[ \mathbbm{1}_{\left\{ \| W_{\infty} \|_{\VV^{- 1 / 2 - \varepsilon}}
     \leqslant K \right\}} \lesssim_{K, n} \exp \left( - \| W_{\infty}
     \|^n_{\VV^{- 1 / 2 - \varepsilon}} \right) = \exp \left( - \|
     \tilde{W}_{\infty} + I_{\infty} (U (\tilde{W})) \|^n_{\VV^{- 1 / 2 -
     \varepsilon}} \right) \]
  and
  \[ | D_T |^p = e^{- p \left[ V_T (\tilde{W}_T + I (U (\tilde{W}))) +
     \int^T_0 U (\tilde{W}) \mathd \tilde{X} + \frac{1}{2} \int^T_0 \| U_t
     (\tilde{W}) \|^2_{L^2} \mathd t \right]} . \]
  Combining these two facts we have
  \[ \begin{array}{l}
       \\
       \mathbbm{E}_{\mathbbm{Q}^u} \left[ | D_T |^p \mathbbm{1}_{\left\{ \| W
       \|_{\VV^{- 1 / 2 - \varepsilon}} \leqslant K \right\}} \right]\\
       \lesssim \mathbbm{E}_{\mathbbm{Q}^u} \left[ \exp \left( - p \left( V_T
       (\tilde{W}_T + I_T (U (\tilde{W}))) + \int^T_0 U_t (\tilde{W}) \mathd
       \widetilde{X_t} + \frac{1}{2} \int^T_0 \| U_t (\tilde{W}) \|^2_{L^2}
       \mathd t \right) \right. \right.\\
       \qquad \qquad \qquad \qquad \qquad \left. \left. - \|
       \tilde{W}_{\infty} + I_{\infty} (U (\tilde{W})) \|^n_{\VV^{- 1 / 2 -
       \varepsilon}} \right) \right]\\
       =\mathbbm{E} \left[ \exp \left( - p \left( V_T (W_T + I_T (U (W))) +
       \int^T_0 U_t (W) \mathd X_t + \frac{1}{2} \int^T_0 \| U_t (W)
       \|^2_{L^2} \mathd t \right) \right. \right.\\
       \qquad \qquad \qquad \qquad \qquad \left. \left. - \| W_{\infty} +
       I_{\infty} (U (W)) \|^n_{\VV^{- 1 / 2 - \varepsilon}} \right) \right] .
     \end{array} \]
  The Bou{\'e}--Dupuis formula~(\ref{eq:bd}) provides the variational bound
  \[ \begin{array}{l}
       - \log \mathbbm{E}_{\mathbbm{Q}^u} \left[ | D_T |^p
       \mathbbm{1}_{\left\{ \| W \|_{\VV^{- 1 / 2 - \varepsilon}} \leqslant K
       \right\}} \right]\\
       \qquad \gtrsim \inf_{w \in \mathbbm{H}_a} \mathbbm{E} \left[ p \left(
       V_T (W_T + I_T (h^w)) + \frac{1}{2} \int^T_0 \| h^w \|^2_{L^2} \mathd t
       \right) \right.\\
       \qquad \qquad \hspace{3em} \left. + \frac{1 - p}{2} \int^T_0 \| w_t
       \|^2_{L^2} \mathd t + \| W_{\infty} + I_{\infty} (h^w) \|^n_{\VV^{- 1 /
       2 - \varepsilon}} + \frac{1}{2} \int^{\infty}_T \| w_t \|^2_{L^2}
       \mathd t \right]
     \end{array} \]
  where we have set $h^w = w + U (W + I (w))$ as above. Recall now that from
  Theorem~\ref{thm:equicoerc} there exists a constant $C$, independent of $T$
  , such that for each $h^w$,
  \[ \mathbbm{E} \left[ p \left( V_T (W_T + I_T (h^w)) + \frac{1}{2} \int^T_0
     \| h^w \|^2_{L^2} \mathd t \right) \right] \geqslant - C + \frac{1}{4}
     \mathbbm{E}_{\mathbbm{P}} \left[ \lambda \| I_T (h^w) \|^4_{L^4} +
     \int^T_0 \| l^T (h^w) \|_{L^2}^2 \right] \]
  where
  \[ l_t^T (h^w) = h_t^w + \lambda \mathbbm{1}_{t \leqslant T}
     \mathbbm{W}^{\langle 3 \rangle}_t + \lambda \mathbbm{1}_{t \leqslant T}
     J_t (\mathbbm{W}_t^2 \succ I_t^{\flat} (h^w)) . \]
  Using eq.~(\ref{eq:uw}) we compute
  \begin{eqnarray*}
    \mathbbm{1}_{t \leqslant T} l_t^T (h^w) & = & \mathbbm{1}_{t \leqslant T}
    h_t^w + \lambda \mathbbm{1}_{t \leqslant T} \mathbbm{W}^{\langle 3
    \rangle}_t + \lambda \mathbbm{1}_{t \leqslant T} J_t (\mathbbm{W}_t^2
    \succ I_t^{\flat} (h^w))\\
    & = & \mathbbm{1}_{t \leqslant T} (u_t^w + w_t) + \lambda \mathbbm{1}_{t
    \leqslant T} \mathbbm{W}^{\langle 3 \rangle}_t + \lambda \mathbbm{1}_{t
    \leqslant T} J_t (\mathbbm{W}_t^2 \succ I_t^{\flat} (h^w))\\
    & = & \mathbbm{1}_{t \leqslant T} (r_t^w + w_t) .
  \end{eqnarray*}
  At this point we need a lower bound for
  \[ \begin{array}{l}
       \mathbbm{E} \left[ \frac{1}{4} \left( \lambda \| I_T (h^w) \|^4_{L^4} +
       \int^T_0 \| r_t^w + w_t \|_{L^2}^2 \mathd t \right) + \frac{1 - p}{2}
       \int^T_0 \| w_t \|^2_{L^2} \mathd t \right.\\
       \qquad \qquad \qquad \qquad \qquad \left. + \| W_{\infty} + I_{\infty}
       (h^w) \|^n_{\VV^{- 1 / 2 - \varepsilon}} + \frac{1}{2} \int^{\infty}_T
       \| w_t \|^2_{L^2} \mathd t \right] - C.
     \end{array} \]
  Given that we need to take $p > 1$, this expression present a difficulty in
  the fact that the term $\int^T_0 \| w_t \|^2_{L^2} \mathd t$ appears with a
  negative coefficient. Note that this term cannot easily be controlled via
  $\int^T_0 \| r_t^w + w_t \|_{L^2}^2 \mathd t$ since the contribution $r^w$,
  see eq.~(\ref{eq:rw}), contains factors which are homogeneous in $w$ of
  order up to $3$. This is the reason we had to localize the estimate,
  introduce the ``good'' term $\| W_{\infty} + I_{\infty} (h^w) \|^n_{\VV^{- 1
  / 2 - \varepsilon}}$, and introduce the term $J_s \langle \mathD \rangle^{-
  1 / 2} (\llbracket (\langle \mathD \rangle^{- 1 / 2} W_s)^n \rrbracket)$ in
  (\ref{eq:pre-pre-drift}) which will help us to control the growth of $r^w$.
  Indeed in Lemma~\ref{lemma:driftgronwallestimate} below, a Gronwall argument
  will allow us to show that $\int^T_0 \| w_t \|^2_{L^2} \mathd t$ can be
  bounded by a combination of the other ``good'' terms as
  \[ \mathbbm{E} \left[ \int^T_0 \| w \|^2_{L^2} \mathd t \right] \lesssim
     \mathbbm{E} \left[ \| I^{\flat}_T (h) \|^4_{L^4} + \| I_T^{\flat} (h)
     \|^n_{\VV^{- 1 / 2 - \varepsilon}} + \int^T_0 \| w_t + r_t^w \|_{L^2}^2
     \mathd t + 1 \right] . \]
  This implies that for $1 < p \ll 2$,
  \[ \begin{array}{l}
       - \log \mathbbm{E}_{\mathbbm{Q}^u} \left[ | D_T |^p
       \mathbbm{1}_{\left\{ \| W \|_{\VV^{- 1 / 2 - \varepsilon}} \leqslant K
       \right\}} \right]\\
       \quad \geqslant \inf_{w \in \mathbbm{H}_a} \mathbbm{E} \left\{
       \frac{1}{4} \left[ \lambda \| I_T (h^w) \|^4_{L^4} + \int^T_0 \| l^T_t
       (h^w) \|_{L^2}^2 \mathd t \right] \right.\\
       \quad \quad \quad + (1 - p) C \left[ \| I^{\flat}_T (h^w) \|^4_{L^4} +
       \| I_T^{\flat} (h^w) \|^n_{\VV^{- 1 / 2 - \varepsilon}} + \int^T_0 \|
       l^T_t (h^w) \|_{L^2}^2 \mathd t \right]\\
       \quad \quad \quad \left. + \| W_{\infty} + I_{\infty} (h^w)
       \|^n_{\VV^{- 1 / 2 - \varepsilon}} \right\} - C\\
       \quad \geqslant - C
     \end{array} \]
  which gives the claim. Note that here we used the bound
  \[ \mathbbm{E} \| I_{\infty} (h^w) \|^n_{\VV^{- 1 / 2 - \varepsilon}}
     \lesssim \mathbbm{E} \| W_{\infty} \|^n_{\VV^{- 1 / 2 - \varepsilon}}
     +\mathbbm{E} \| W_{\infty} + I_{\infty} (h^w) \|^n_{\VV^{- 1 / 2 -
     \varepsilon}} \]
  \[ \lesssim C +\mathbbm{E} \| W_{\infty} + I_{\infty} (h^w) \|^n_{\VV^{- 1 /
     2 - \varepsilon}} \]
  as well as the fact that $\| I^{\flat}_t (h^w) \|_{\VV^{- 1 / 2 -
  \varepsilon}} \lesssim \| I_{\infty} (h^w) \|_{\VV^{- 1 / 2 - \varepsilon}}$
  to conclude.
\end{proof*}

The following lemmas complete the proof.

\begin{lemma}
  \label{lemma:driftgronwallestimate}For $n \in \mathbbm{N}$ odd and large
  enough
  \[ \mathbbm{E} \int^T_0 \| w_s \|_{L^2}^2 \mathd s \lesssim \mathbbm{E}
     \int^T_0 \| w_s + r_s^w \|^2 \mathd s +\mathbbm{E} \| I_T^{\flat} (h^w)
     \|^{n + 1}_{\VV^{- 1 / 2 - \varepsilon}} + \| I^{\flat}_T (h^w)
     \|^4_{L^4} + 1. \]
\end{lemma}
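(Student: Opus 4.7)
The plan is to start from the identity $w = (w + r^w) - r^w$, which gives
\[
\int_0^T \| w_s \|_{L^2}^2 \mathd s \leqslant 2 \int_0^T \| w_s + r_s^w \|_{L^2}^2 \mathd s + 2 \int_0^T \| r_s^w \|_{L^2}^2 \mathd s,
\]
so everything reduces to bounding $\mathbbm{E} \int_0^T \| r_s^w \|_{L^2}^2 \mathd s$ by the good terms on the right-hand side of the lemma. Inspecting~(\ref{eq:rw}), $r^w$ splits into three kinds of contribution: (a) linear-in-$I_s(w)$ paraproduct terms $J_s(\llbracket \tilde{W}_s^2 \rrbracket \diamond I_s(w))$ for $\diamond \in \{\circ, \prec, \succ(1-\theta_s)\}$, all of which gain regularity from the paraproduct ordering; (b) the polynomial nonlinearities $J_s \tilde{W}_s (I_s(w))^2$, $J_s (I_s(w))^3$, together with the terms containing $I_s^{\flat}(u^w)$; and (c) the $n$-th order Wick-power contribution.

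For (a) I would combine standard paraproduct estimates with the regularization bound $\|J_s f\|_{B^{s+1-\alpha}_{p,p}} \lesssim \langle s\rangle^{-\alpha-1/2}\|f\|_{B^s_{p,p}}$ recalled in the setup, and the hypercontractivity bound $\|\llbracket \tilde{W}_s^2\rrbracket\|_{\VV^{-1-\varepsilon}} \in L^p(\mathbbm{Q}^u)$ for every $p$. The small prefactor $\bar{T}^{-\varepsilon}$ coming from $\mathbbm{1}_{\{s \geqslant \bar{T}\}}$ then allows closing a Gronwall-type argument parallel to the one producing~(\ref{boundIu}): the linear-in-$w$ contributions to $\int_0^T \|r_s^w\|_{L^2}^2 \mathd s$ are absorbed by $\int_0^T \|w_s + r_s^w\|_{L^2}^2 \mathd s$ plus stochastic data with uniformly bounded moments.

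For (b) and (c) I would use the identity $I_s(w) = I_s(h^w) - I_s(u^w)$ together with a Gronwall estimate of the same flavor as~(\ref{boundIu}) controlling $\sup_s \|I_s(u^w)\|_{L^\infty}$ by stochastic quantities with bounded $L^p$-moments and $\sup_s \|I_s(h^w)\|_{L^\infty}$-dependent pieces. This lets me replace powers of $I_s(w)$ by powers of $I_s(h^w)$ modulo admissible remainders. The quadratic and cubic contributions are then bounded via $\|I_T^{\flat}(h^w)\|^4_{L^4}$ after interpolation, while the $n$-th order term is absorbed into $\|I_T^{\flat}(h^w)\|^{n+1}_{\VV^{-1/2-\varepsilon}}$ by Young's inequality $|x|^n \lesssim |x|^{n+1} + 1$ — precisely the purpose of the $\llbracket(\langle\mathD\rangle^{-1/2}W_s)^n\rrbracket$ term introduced in~(\ref{eq:pre-pre-drift}). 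The occurrences of $I_s^{\flat}(u^w)$ are resolved by $u^w = h^w - w$: the $h^w$ part is treated as above, while the $w$ part carries the $\bar{T}^{-\varepsilon}$ prefactor and can be absorbed on the left by taking $\bar{T}$ large.

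The main obstacle is the $n$-th order term, whose domination by $\|I_T^{\flat}(h^w)\|^{n+1}_{\VV^{-1/2-\varepsilon}}$ is tight: one has to balance the regularity losses of passing from $\VV^{-1/2-\varepsilon}$ to the $L^p$ norms of $I_s(w)$ appearing inside $\|r_s^w\|_{L^2}$ against the time integrability provided by $J_s \langle \mathD \rangle^{-1/2}$, and choose $n$ large enough that Young's inequality leaves a margin. Odd $n$ ensures that after the binomial expansion of the Wick polynomial in $W_s^u = W_s - I_s(u)$ the dominant $(I_s(w))^n$-contribution comes with the right sign to be controlled by the positive coercive term, rather than merely cancelling against it.
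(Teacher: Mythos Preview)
Your approach has a genuine gap in the treatment of the $n$-th order Wick term (your item (c)). By starting from $\|w_s\|^2 \leqslant 2\|w_s+r_s^w\|^2 + 2\|r_s^w\|^2$ you are forced to control $\int_0^T \|r_s^w\|_{L^2}^2\,\mathd s$, and in particular the square of the contribution
\[
\tmop{Aux}_s(W,w) \;=\; \sum_{i=0}^n \binom{n}{i} J_s \langle \mathD \rangle^{-1/2}\bigl[\llbracket(\langle \mathD\rangle^{-1/2}W_s)^i\rrbracket(\langle \mathD\rangle^{-1/2}I_s(w))^{n-i}\bigr].
\]
The leading piece of $\|\tmop{Aux}_s\|_{L^2}^2$ is of order $\|I_s(w)\|_{W^{-1/2,2n}}^{2n}$, i.e.\ homogeneous of degree $2n$ in $I(w)$. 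The only coercive term at your disposal is $\|I_T^{\flat}(h^w)\|_{\VV^{-1/2-\varepsilon}}^{n+1}$, of degree $n+1$. Your proposed Young inequality ``$|x|^n \lesssim |x|^{n+1}+1$'' addresses a degree-$n$ object, not a degree-$2n$ one; no interpolation against $\|I(w)\|_{H^1}^2 \lesssim \int\|w\|^2$ (degree $2$) can close this, since $2n > (n+1)$ for every $n\geqslant 2$. (Also, the linear-in-$I_s(w)$ paraproduct terms in~(\ref{eq:rw}) do \emph{not} carry the cutoff $\mathbbm{1}_{\{s\geqslant\bar T\}}$, so the $\bar T^{-\varepsilon}$ smallness you invoke for them is not available; these terms are instead handled by the extra decay in $s$ recorded in Lemma~\ref{analyticestimate1}.)

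The paper avoids squaring $\tmop{Aux}$ altogether. Writing $r_s^w=\tilde r_s^w-\tmop{Aux}_s$ and using $w^2=2(w+r)^2-4wr-2r^2-w^2$, the negative term $-2(r_s^w)^2$ is simply dropped, and the cross term is split as $-4w_s\tilde r_s^w + 4w_s\tmop{Aux}_s$. The crucial step is an It\^o/chain-rule identity
\[
\int_0^T\!\!\int_\Lambda \tmop{Aux}_s(W,w)\,w_s\,\mathd s \;=\; \overline{\tmop{Aux}}_T(W,w) + \text{martingale},
\]
with $\overline{\tmop{Aux}}_T$ of degree $n+1$ in $I_T(w)$; by Lemma~\ref{lemma:bounds2} this quantity is two-sided comparable to $\|I_T(w)\|_{W^{-1/2,n+1}}^{n+1}$ (here is where odd $n$ is used: $n+1$ even makes this an honest norm). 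One then runs Gronwall on the combined quantity $\mathbb E\bigl(\int_0^t\|w_s\|^2\,\mathd s+\overline{\tmop{Aux}}_t(W,w)\bigr)$, using Lemma~\ref{lemma:estimateremainder} to bound $\|\tilde r_t^w\|_{L^2}^2$ --- which no longer contains the high-degree $\tmop{Aux}$ piece --- by this same combined quantity times $\langle t\rangle^{-1-\varepsilon}$. The exact integration of the cross term is what keeps the homogeneity at $n+1$ and allows the estimate to close; your triangle-inequality route cannot reproduce this.
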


\begin{proof}
  Let us introduce the notation
  \[ \tmop{Aux}_s (W, w) \assign \sum_{i = 0}^n \binom{n}{i} J_s \langle
     \mathD \rangle^{- 1 / 2} (\llbracket (\langle \mathD \rangle^{- 1 / 2}
     W_s)^i \rrbracket (\langle \mathD \rangle^{- 1 / 2} I_s (w))^{n - i}_{})
     . \]
  Write $r_s^w = \tilde{r}_s^w + \tmop{Aux}_s (W, w)$ and observe that
  \begin{eqnarray*}
    w^2_s & = & 2 (w_s + r_s^w)^2 - 4 w_s r_s^w - 2 (r_s^w)^2 - w^2_s\\
    & = & 2 (w_s + r_s^w)^2 - 4 w_s \tilde{r}_s^w - 2 (r_s^w)^2 - w^2_s - 4
    \tmop{Aux}_s (W, w) w_s .
  \end{eqnarray*}
  We can apply Ito formula to obtain
  \[ \int^T_0 \int_{\Lambda} \tmop{Aux}_s (W, w) w_s \tmop{ds} =
     \overline{\tmop{Aux}}_T (W, w) + \text{martingale} \]
  where
  \[ \overline{\tmop{Aux}}_T (W, w) \assign \sum_{i = 0}^n \frac{1}{n + 1 - i}
     \binom{n}{i} \int_{\Lambda} (\llbracket (\langle \mathD \rangle^{- 1 / 2}
     W_T)^i \rrbracket (\langle \mathD \rangle^{- 1 / 2} I_T (w))^{n + 1 -
     i}_{}) . \]
  By Lemma~\ref{lemma:bounds2} below, we have constants $c, C$ and a random
  variable $Q_T (W)$ such that
  \[ \sup_T \mathbbm{E} [| Q_T (W) |] < \infty \]
  and

  \[ c \int^T_0 \| w_s \|_{L^2}^2 \mathd s + c \| I_T (w) \|^{n + 1}_{W^{- 1 /
     2, n + 1}} - Q_T (W) \leqslant \int^T_0 \| w_s \|_{L^2}^2 \mathd s +
     \overline{\tmop{Aux}}_T (W, w) \]
  \[ \leqslant C \| I_T (w) \|^{n + 1}_{W^{- 1 / 2, n + 1}} + C \int^T_0 \|
     w_s \|_{L^2}^2 \mathd s + Q_T (W) . \]
  Integrating over space we obtain
  \begin{eqnarray*}
    \frac{\mathd}{\mathd t} \mathbbm{E} \left( \int^t_0 \| w_s \|^2 \mathd s +
    \overline{\tmop{Aux}}_t (W, w) \right) & \leqslant & 2\mathbbm{E} \| w_t +
    r_t^w \|_{L^2}^2 - 4\mathbbm{E} \int w_s \tilde{r}_s^w - \| w_s
    \|_{L^2}^2\\
    & \leqslant & 2\mathbbm{E} \| w_t + r_t^w \|_{L^2}^2 + 4\mathbbm{E} \|
    \tilde{r}_s^w \|_{L^2}^2 .
  \end{eqnarray*}
  Now by Lemma~\ref{lemma:estimateremainder} below
  \[ \begin{array}{lll}
       \langle t \rangle^{1 + \varepsilon} \| \tilde{r}_t^w \|_{L^2}^2 &
       \lesssim & \int^t_0 \| w_s \|_{L^2}^2 \mathd s + \| I_t (w) \|^{n +
       1}_{W^{- 1 / 2, n + 1}} + \| I_t^{\flat} (h^w) \|^{n + 1}_{\VV^{- 1 / 2
       - \varepsilon}} + \| I^{\flat}_t (h^w) \|^4_{L^4} +_{} Q_t (W)\\
       & \lesssim & Q_t (W) + \int^t_0 \| w_s \|_{L^2}^2 \mathd s +
       \overline{\tmop{Aux}}_t (W, w) + \| I_t^{\flat} (h^w) \|^{n +
       1}_{\VV^{- 1 / 2 - \varepsilon}} + \| I^{\flat}_t (h^w) \|^4_{L^4}
     \end{array} \]
  Gathering things together we have
  \[ \begin{array}{ll}
       & \frac{\mathd}{\mathd t} \mathbbm{E} \left( \int^t_0 \| w_s \|^2
       \mathd s + \overline{\tmop{Aux}}_t (W, w) \right)\\
       \lesssim & \frac{1}{\langle t \rangle^{1 + \varepsilon}} \left(
       \int^t_0 \| w_s \|_{L^2}^2 \mathd s + \overline{\tmop{Aux}}_t (W, w)
       \right) + \frac{1}{\langle t \rangle^{1 + \varepsilon}} \left( \|
       I_t^{\flat} (h^w) \|^{n + 1}_{\VV^{- 1 / 2 - \varepsilon}} + \|
       I^{\flat}_t (h^w) \|^4_{L^4} \right)\\
       & + 2\mathbbm{E} \| w_t + r_t^w \|_{L^2}^2 .
     \end{array} \]
  Then Gronwall's lemma allows to conclude
  \[ - 1 +\mathbbm{E} \left( \int^t_0 \| w_s \|_{L^2}^2 \mathd s \right)
     \lesssim \mathbbm{E} \left( \int^t_0 \| w_s \|_{L^2}^2 \mathd s +
     \overline{\tmop{Aux}}_t (W, w) \right) \]
  \[ \lesssim \mathbbm{E} \left( \int^t_0 \| w_s + r_s^w \|_{L^2}^2 \mathd s +
     \| I_t^{\flat} (h^w) \|^{n + 1}_{\VV^{- 1 / 2 - \varepsilon}} + \|
     I^{\flat}_t (h^w) \|^4_{L^4} \right) + 1. \]
  
\end{proof}

\begin{lemma}
  \label{lemma:bounds2}There exists constants $c, C$ and a random variable
  $Q_T (W)$ such that
  \[ \sup_T \mathbbm{E} [| Q_T (W) |] < \infty, \]
  and
  \begin{eqnarray*}
    &  & - Q_T (W) + c \int^T_0 \| w_s \|_{L^2}^2 \mathd s + c \| I_T (w)
    \|^{n + 1}_{W^{- 1 / 2, n + 1}}\\
    & \leqslant & \int^T_0 \| w_s \|_{L^2}^2 \mathd s +
    \overline{\tmop{Aux}}_T (W, w)\\
    & \leqslant & C \| I_T (w) \|^{n + 1}_{W^{- 1 / 2, n + 1}} + C \int^T_0
    \| w_s \|_{L^2}^2 \mathd s + Q_T (W)
  \end{eqnarray*}
  
\end{lemma}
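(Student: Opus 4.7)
The plan is to split $\overline{\tmop{Aux}}_T(W,w) = \sum_{i=0}^n \cdots$ and exploit the fact that the $i = 0$ summand equals exactly the coercive quantity. Since $n$ is odd, $n+1$ is even, and
$$\frac{1}{n+1}\int_\Lambda (\langle \mathD \rangle^{-1/2} I_T(w))^{n+1}\, \mathd x = \frac{1}{n+1}\|I_T(w)\|_{W^{-1/2,\, n+1}}^{n+1} \geqslant 0.$$
The $\int_0^T \|w_s\|_{L^2}^2 \mathd s$ term appears identically on both sides of the desired double inequality, so the mismatch of its coefficients $1$, $c$, $C$ is inconsequential and the content of the lemma reduces to upper and lower bounds on $\overline{\tmop{Aux}}_T(W,w)$ by $\pm Q_T(W) + (\mathrm{const})\,\|I_T(w)\|_{W^{-1/2,\, n+1}}^{n+1}$.

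For each $i \in \{1,\ldots,n\}$ the corresponding summand is a pairing of the Wick power $\llbracket(\langle \mathD \rangle^{-1/2} W_T)^i\rrbracket$ (which lives in $\VV^{0-}$ but whose pointwise $L^p$-norms diverge logarithmically in $T$) with the $(n+1-i)$-th power of $\langle \mathD \rangle^{-1/2} I_T(w)$. I would estimate it via the Besov duality $|\langle f,g\rangle| \lesssim \|f\|_{B^{-\varepsilon_0}_{p,p}}\|g\|_{B^{\varepsilon_0}_{p',p'}}$ with conjugate exponents $p = (n+1)/i$ and $p' = (n+1)/(n+1-i)$ and small $\varepsilon_0 > 0$, then apply a Moser/fractional-chain-rule product estimate to bound the Besov norm of the power by $\|\langle \mathD \rangle^{-1/2} I_T(w)\|_{L^{n+1}}^{n+1-i} = \|I_T(w)\|_{W^{-1/2,\, n+1}}^{n+1-i}$. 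A Young inequality with the same conjugate exponents splits the result into $\varepsilon\|I_T(w)\|_{W^{-1/2,\, n+1}}^{n+1} + C_\varepsilon \|\llbracket(\langle \mathD \rangle^{-1/2} W_T)^i\rrbracket\|_{B^{-\varepsilon_0}_{p,p}}^{(n+1)/i}$; summing over $i$, taking $\varepsilon$ small enough to absorb into the coercive $i = 0$ contribution, and setting
$$Q_T(W) := 1 + \sum_{i=1}^n C_\varepsilon \|\llbracket(\langle \mathD \rangle^{-1/2} W_T)^i\rrbracket\|_{B^{-\varepsilon_0}_{(n+1)/i,\, (n+1)/i}}^{(n+1)/i}$$
yields both inequalities simultaneously.

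The uniform moment bound $\sup_T \mathbbm{E}[|Q_T(W)|] < \infty$ then follows from Table~\ref{table:reg} combined with standard machinery: $\llbracket(\langle \mathD \rangle^{-1/2} W_T)^i\rrbracket$ belongs to $\VV^{0-} = B^{0-}_{\infty,\infty}$ uniformly in $T$, the Besov embedding (with an arbitrarily small regularity loss) places it in $B^{-\varepsilon_0}_{(n+1)/i,\, (n+1)/i}$, and Nelson hypercontractivity in the $i$-th Wiener chaos promotes the spatial Besov bound to uniform-in-$T$ control of every $L^p(\mathbbm{P})$-moment. I expect the principal technical obstacle to be the Moser product estimate, ensuring that the $B^{\varepsilon_0}_{p',\, p'}$-norm of the power $(\langle \mathD \rangle^{-1/2} I_T(w))^{n+1-i}$ is genuinely controlled by $\|I_T(w)\|_{W^{-1/2,\, n+1}}^{n+1-i}$; the estimate closes precisely because $\varepsilon_0$ can be chosen as small as one wishes, so the regularity loss from $L^{n+1}$ to the fractional Besov scale is negligible.
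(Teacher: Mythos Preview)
Your reduction and your Moser step together contain a genuine gap. You claim that the coefficients $c<1<C$ on $\int_0^T\|w_s\|_{L^2}^2\,\mathd s$ are ``inconsequential'' and that it suffices to bound $\overline{\tmop{Aux}}_T(W,w)$ purely by $\pm Q_T(W)+(\mathrm{const})\,\|I_T(w)\|_{W^{-1/2,\,n+1}}^{n+1}$. That stronger statement is in fact false, and the reason is exactly the step you flag as the ``principal technical obstacle'': the inequality
\[
  \bigl\|(\langle\mathD\rangle^{-1/2}I_T(w))^{\,n+1-i}\bigr\|_{B^{\varepsilon_0}_{p',p'}}
  \lesssim \|I_T(w)\|_{W^{-1/2,\,n+1}}^{\,n+1-i}
\]
cannot hold for any $\varepsilon_0>0$, however small. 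A pure $L^{n+1}$ norm carries no positive regularity, so no fractional chain rule can produce a $B^{\varepsilon_0}$ bound from it alone. Concretely, take $\phi_k(x)=e^{ik\cdot x}$: then $\|\phi_k\|_{L^{n+1}}$ is independent of $k$ while $\|\phi_k^{\,m}\|_{B^{\varepsilon_0}_{p',p'}}\sim|k|^{\varepsilon_0}\to\infty$. The smallness of $\varepsilon_0$ does not make this ``negligible''.

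The paper's proof supplies the missing positive regularity by also using $\|I_T(w)\|_{H^1}\lesssim\bigl(\int_0^T\|w_s\|_{L^2}^2\,\mathd s\bigr)^{1/2}$. It pairs the Wick power in $\VV^{-\varepsilon}$ against the $(n{+}1{-}i)$-th power in $B^{\varepsilon}_{1,1}$, then interpolates $\|\langle\mathD\rangle^{-1/2}I_T(w)\|_{B^{\varepsilon}_{n,1}}$ between $\|I_T(w)\|_{W^{-1/2,n+1}}$ and $\|I_T(w)\|_{H^1}$, choosing $\varepsilon=\tfrac{1}{n(n-1)}$ so that the $H^1$ contribution carries a tiny exponent. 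After a Young inequality this yields a bound by $\|I_T(w)\|_{W^{-1/2,n+1}}^{n+1}+\|I_T(w)\|_{H^1}^2+1$, and the $H^1$ term is precisely why the lemma needs $c<1<C$: that slack absorbs a small multiple of $\int_0^T\|w_s\|_{L^2}^2\,\mathd s$. Your outline becomes correct once you allow this extra ingredient; without it the argument does not close.
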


\begin{proof}
  We recall that
  \begin{eqnarray*}
    \overline{\tmop{Aux}}_T (W, w) & = & \sum_{i = 0}^n \frac{1}{n + 1 - i}
    \binom{n}{i} \int (\llbracket (\langle \mathD \rangle^{- 1 / 2} W_T)^i
    \rrbracket^{} (\langle \mathD \rangle^{- 1 / 2} I_T (w))^{n + 1 - i}_{})\\
    & = & \sum_{i = 1}^n \frac{1}{n + 1 - i} \binom{n}{i} \int (\llbracket
    (\langle \mathD \rangle^{- 1 / 2} W_T)^i \rrbracket^{} (\langle \mathD
    \rangle^{- 1 / 2} I_T (w))^{n + 1 - i}_{})\\
    &  & + \frac{1}{n + 1} \| I_T (w) \|^{n + 1}_{W^{- 1 / 2, n + 1}}
  \end{eqnarray*}
  and since $\sup_{T < \infty} \mathbbm{E} \left[ \| \llbracket (\langle
  \mathD \rangle^{- 1 / 2} W_T)^i \rrbracket \|^p_{\VV^{- \varepsilon}}
  \right] < \infty$ for any $p < \infty$ and any $\varepsilon > 0$ it is
  enough to bound $\| (\langle \mathD \rangle^{- 1 / 2} I_T (w))^{n + 1 -
  i}_{} \|^q_{B_{1, 1}^{\varepsilon}}$ for some $q > 1$ by the terms $\| I_T
  (w) \|^{n + 1}_{W^{- 1 / 2, n + 1}}$ and $\| I_T (w) \|^2_{H^1} \lesssim
  \int^T_0 \| w_s \|_{L^2}^2 \mathd s$. By interpolation we can estimate, for
  $i \geqslant 1$,
  \[ \begin{array}{lll}
       \| (\langle \mathD \rangle^{- 1 / 2} I_T (w))^{n + 1 - i}_{} \|_{B_{1,
       1}^{\varepsilon}} & \lesssim & \| \langle \mathD \rangle^{- 1 / 2} I_T
       (w) \|^n_{B^{\varepsilon}_{n, 1}} + C\\
       \qquad & \lesssim & \| I_T (w) \|_{W^{- 1 / 2, n + 1}}^{n - \frac{1}{(n
       - 1)}} \| I_T (w) \|^{\frac{1}{n - 1}}_{H^1} + C \qquad \qquad
       \text{(let $\varepsilon = \tfrac{1}{n (n - 1)}$)}
     \end{array} \]
  Choosing $q = n / \left( n - \frac{1}{(n - 1)} \right) > 1$, we have
  \[ \left( \| I_T (w) \|_{W^{- 1 / 2, n + 1}}^{n - \frac{1}{(n - 1)}} \|
     I_T (w) \|^{\frac{1}{n - 1}}_{H^1} \right)^q = \| I_T (w) \|_{W^{- 1 / 2,
     n + 1}}^n \| I_T (w) \|^{\frac{n}{(n - 1) n - 1}}_{H^1} . \]
  Now for $n$ large enough $\frac{n}{(n - 1) n - 1} \leqslant \frac{2}{n + 1}$
  and using Young's inequality we can estimate
  \[ \begin{array}{lll}
       \| I_T (w) \|_{W^{- 1 / 2, n + 1}}^n \| I_T (w) \|^{\frac{n}{(n - 1)
       n - 1}}_{H^1} & \lesssim & \| I_T (w) \|_{W^{- 1 / 2, n + 1}}^n
       \left( \| I_T (w) \|^{\frac{2}{n + 1}}_{H^1} + 1 \right)\\
       & \lesssim & \| I_T (w) \|_{W^{- 1 / 2, n + 1}}^{n + 1} + \| I_T
       (w) \|^2_{H^1} + 1
     \end{array} \]
  
\end{proof}

\begin{lemma}
  \label{lemma:estimateremainder}Let
  \[ \begin{array}{lll}
       \tilde{r}_s^w & = & - 12 \lambda J_s \llbracket W_s^2 \rrbracket \succ
       (1 - \theta_s) I_s (w) + 12 \lambda J_s (\llbracket W_s^2 \rrbracket
       \circ I_s (w)) + 12 \lambda J_s \llbracket W_s^2 \rrbracket \prec I_s
       (w)\\
       &  & - 12 \lambda J_s W_s (I_s (w))^2 - 4 \lambda J_s (I_s (w))^3 - 24
       \lambda (J_s (W_s I_s (w) \succ \theta_s I^{\flat}_s (u^w)))\\
       &  & - 12 \lambda J_s ((I_s (w))^2 \succ \theta_s I^{\flat}_s (u^w)) +
       \lambda \mathbbm{1}_{\{ s < \bar{T} \}} J_s (\mathbbm{W}_s^2 \succ
       I_s^{\flat} (u^w)) .
     \end{array} \]
  Setting $h^w = u + w$, there exists a random variable $Q_t (W)$ such that
  $\sup_t \mathbbm{E} [| Q_t (W) |] < \infty$ and
  \[ \langle t \rangle^{1 + \varepsilon} \| \tilde{r}_t^w \|^2 \lesssim
     \int^t_0 \| w_s \|_{L^2}^2 \mathd s + \| I_t (w) \|^{n + 1}_{W^{- 1 / 2,
     n + 1}} + \|I_t^{\flat} (h^w) \|^{n + 1}_{\VV^{- 1 / 2 - \varepsilon}} +
     \| I^{\flat}_t (h^w) \|^4_{L^4} +_{} Q_t (W) . \]
\end{lemma}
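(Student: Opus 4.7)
The plan is to estimate each of the eight summands of $\tilde r_t^w$ separately in $L^2(\Lambda)$, extract the decay factor $\langle t\rangle^{-(1+\varepsilon)/2}$ from the smoothing operator $J_t$ via the bound $\|J_t f\|_{B^{\sigma+1-\alpha}_{p,p}} \lesssim \langle t\rangle^{-\alpha-1/2}\|f\|_{B^\sigma_{p,p}}$ recorded in the setup, and then control the remaining Besov norms by the four good terms on the right-hand side, collecting purely $W$-dependent contributions into $Q_t(W)$. By Table~\ref{table:reg} and hypercontractivity every Wick power of $W$ that appears has $L^p(\mathbbm{P})$ moments of every order, so after Young's inequality any polynomial expression in these Besov norms is absorbed into $Q_t(W)$.

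For the three terms linear in $I_t(w)$ with prefactor $\llbracket W_t^2 \rrbracket$ (the paraproducts $\succ(1-\theta_t)$, $\circ$ and $\prec$), I use standard paraproduct bounds together with $\llbracket W_t^2\rrbracket \in \VV^{-1-\varepsilon}$. In the $\succ$ term the low-frequency cutoff $(1-\theta_t)$ kills the high frequencies of $I_t(w)$, supplying extra smoothness; in the $\prec$ term only $\|I_t(w)\|_{L^\infty}$ is needed and is interpolated between $\|I_t(w)\|_{H^1}^2 \lesssim \int_0^t \|w_s\|_{L^2}^2\,\mathd s$ and $\|I_t(w)\|_{W^{-1/2,n+1}}$. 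After $J_t$ is applied the outputs land in $L^2$ with the required decay.

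For the purely polynomial contributions $J_t W_t (I_t(w))^2$ and $J_t(I_t(w))^3$ I use $\|W_t\|_{\VV^{-1/2-\varepsilon}} \in L^p$ together with Besov estimates for the powers of $I_t(w)$, interpolating between $H^1$ and $W^{-1/2,n+1}$ exactly in the spirit of Lemma~\ref{lemma:bounds2}. For $n$ odd and large enough the interpolation exponents close against $\|I_t(w)\|_{W^{-1/2,n+1}}^{n+1} + \int_0^t\|w_s\|_{L^2}^2\,\mathd s + Q_t(W)$ via Young's inequality.

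For the three terms containing $I_s^{\flat}(u^w)$ the decisive algebraic step is $u^w = h^w - w$, which gives $I_s^{\flat}(u^w) = I_s^{\flat}(h^w) - I_s^{\flat}(w)$. Substituting, the pieces with $I_s^{\flat}(h^w)$ are controlled via paraproducts and Young's inequality by $\|I_t^{\flat}(h^w)\|_{\VV^{-1/2-\varepsilon}}^{n+1} + \|I_t^{\flat}(h^w)\|_{L^4}^4$, while the pieces with $I_s^{\flat}(w)$ are reabsorbed into $\int_0^t \|w_s\|_{L^2}^2\,\mathd s$ using the $H^1$ smoothing of $I$ and the frequency truncation $\theta_s$. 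The term $\mathbbm{1}_{\{s<\bar T\}} J_s(\mathbbm{W}_s^2 \succ I_s^{\flat}(u^w))$ is the easiest since the cutoff makes decay in $s$ unnecessary and the same split applies. The main obstacle throughout is the resonant term $J_t(\llbracket W_t^2\rrbracket \circ I_t(w))$: the sum of regularities $-1-\varepsilon$ and $1$ is negative, so the product is not classically defined, and one must exploit the spectral support of $I_t(w)$ at scale $t$ to transfer $H^1$ regularity into fractional regularity at a polynomial cost in $t$ that is then defeated by the $J_t$ decay, with the interpolation chosen so that the ensuing Young inequality closes against the right-hand side.
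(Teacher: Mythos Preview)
Your approach is essentially the same as the paper's: estimate each summand separately, use the spectral support of $J_t$ and of the factors at scale $t$ to trade regularity for powers of $\langle t\rangle$, invoke the standard paraproduct bounds, split $I^\flat(u^w)=I^\flat(h^w)-I^\flat(w)$, and close via Young's inequality and the interpolation between $H^1$ and $W^{-1/2,n+1}$. The paper carries this out concretely through Lemmas~\ref{analyticestimate1}--\ref{analyticestimate3} and Lemma~\ref{L4estimate}.

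Two small slips to correct. First, $(1-\theta_t)$ is a \emph{high}-pass filter, not a low-pass one: since $\theta_t(\xi)=1$ for $|\xi|\leqslant t/2$, the factor $(1-\theta_t)I_t(w)$ is supported in an \emph{annulus} of radius $\sim t$, which is what yields the extra $\langle t\rangle^{-1+\varepsilon}$ decay (not ``extra smoothness''). Second, in $\llbracket W_t^2\rrbracket \prec I_t(w)$ the low-frequency factor is $\llbracket W_t^2\rrbracket$, not $I_t(w)$; the paper estimates this term using $\|I_t(w)\|_{H^{1-\varepsilon}}$ rather than $\|I_t(w)\|_{L^\infty}$, then interpolates. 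Finally, note that the $I^\flat(w)$-pieces arising from the split $u^w=h^w-w$ are \emph{not} absorbed into $\int_0^t\|w_s\|_{L^2}^2\,\mathd s$ by $H^1$-smoothing alone: one needs Lemma~\ref{L4estimate} to convert $\|I_t(w)\|_{L^4}^{4+\varepsilon}$ into a \emph{linear} combination of $\|I_t(w)\|_{H^1}^2$ and $\|I_t(w)\|_{W^{-1/2,n+1}}^{n+1}$---a direct Sobolev embedding would give a quadratic dependence on $\int_0^t\|w_s\|_{L^2}^2\,\mathd s$ and the Gronwall argument in Lemma~\ref{lemma:driftgronwallestimate} would not close. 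You invoke this interpolation for the purely polynomial terms, so applying it here as well fixes the gap.
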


\begin{proof}
  Note that
  \[ \begin{array}{lll}
       \| \mathbbm{1}_{\{ s < \bar{T} \}} J_s (\mathbbm{W}_s^2 \succ
       I_s^{\flat} (u^w)) \|_{L^2}^2 & \lesssim_{\bar{T}} & \frac{1}{\langle s
       \rangle^2} \| \mathbbm{W}_s^2 \|^2_{\VV^{- 1 - \varepsilon}} \|
       I_s^{\flat} (u^w) \|^2_{L^4}\\
       & \lesssim & \frac{1}{\langle s \rangle^2} \left( \| \mathbbm{W}_s^2
       \|^4_{\VV^{- 1 - \varepsilon}} + \| I_s^{\flat} (u^w) \|^4_{L^4}
       \right) .
     \end{array} \]
  Moreover $h^w = u^w + w$ implies
  \[ \|I^{\flat}_t (u^w) \|^{n + 1}_{\VV^{- 1 / 2 - \varepsilon}} \lesssim
     \|I^{\flat}_t (w) \|^{n + 1}_{\VV^{- 1 / 2 - \varepsilon}} +
     \|I_t^{\flat} (h^w) \|^{n + 1}_{\VV^{- 1 / 2 - \varepsilon}}, \]
  and $\| I^{\flat}_t (u^w) \|^4_{L^4} \lesssim_{} \| I^{\flat}_t (h^w)
  \|^4_{L^4} + \| I^{\flat}_t (w) \|^4_{L^4}$ . From Lemma~\ref{L4estimate} we
  get
  \[ \| I^{\flat}_t (w) \|^4_{L^4} \lesssim C + \int^t_0 \| w_s \|_{L^2}^2
     \mathd s + \| I_t (w) \|^{n + 1}_{W^{- 1 / 2, n + 1}} . \]
  The estimation for the other terms is easy but technical and postponed until
  Section~\ref{sec:analytic}.
\end{proof}

\section{Singularity of $\Phi_3^4$ w.r.t. the free field}

The goal of this section is to prove that the $\Phi_3^4$ measure is singular
with respect to the Gaussian free field. For this we have to find a set $S
\subseteq \VV^{- 1 / 2 - \varepsilon} (\Lambda)$ such that $\mathbbm{P}
(W_{\infty} \in S) = 1$ and $\mathbbm{Q}^u (W_{\infty} \in S) = 0$. Together
with~(\ref{eq:Phi4def}), this will imply singularity. We claim that setting
\[ S \assign \left\{ f \in \VV^{- 1 / 2 - \varepsilon} (\Lambda) :
   \frac{1}{T_n^{1 / 2 + \delta}} \int_{\Lambda} \llbracket (\theta_{T_n} f)^4
   \rrbracket \rightarrow 0 \right\} \]
for some suitable subsequence $T_n$, does the job. Here
\[ \llbracket (\theta_T f)^4 \rrbracket = (\theta_T f)^4 - 6\mathbbm{E}
   [(\theta_T W_{\infty} (0))^2] (\theta_T f)^2 + 3\mathbbm{E} [(\theta_T
   W_{\infty} (0))^2]^2 \]
denotes the Wick ordering with respect to the Gaussian free field. Let us
prove first that indeed $\mathbbm{P} (W_{\infty} \in S) = 1$ for some $T_n$.
For later use we define
\[ \mathbbm{W}_t^{\theta_T, 3} = 4 (\theta_T W_t)^3 - 12\mathbbm{E} [(\theta_T
   W_t (0))^2] (\theta_T W_t) \]
and
\[ \mathbbm{W}_t^{\theta_T, 2} = 12 ((\theta_T W_t)^2 -\mathbbm{E} [(\theta_T
   W_t (0))^2]) . \]
\begin{lemma}
  For any $\delta > 0$
  \[ \lim_{T \rightarrow \infty} \mathbbm{E} \left[ \left( \frac{1}{T^{(1 +
     \delta) / 2}} \int_{\Lambda} \llbracket (\theta_T W_{\infty})^4
     \rrbracket \right)^2 \right] = 0. \]
\end{lemma}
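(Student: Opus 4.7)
My plan is to compute the second moment in closed form using the joint Gaussianity of $\phi_T := \theta_T W_\infty$ and then reduce the estimate to a single Fourier-analytic bound on a four-point graph. Writing $C_T(x-y) := \mathbbm{E}[\phi_T(x)\phi_T(y)] = \sum_{n \in \mathbbm{Z}^3} \theta_T(n)^2 \langle n\rangle^{-2} e^{i\langle n, x-y\rangle}$ (so that $C_T(0) = \mathbbm{E}[(\theta_T W_\infty(0))^2]$), the orthogonality of Wick powers under a Gaussian law gives
$$\mathbbm{E}\bigl[\llbracket \phi_T(x)^4 \rrbracket \, \llbracket \phi_T(y)^4 \rrbracket \bigr] = 4! \, C_T(x-y)^4,$$
so that by translation invariance
$$\mathbbm{E}\left[\left(\int_\Lambda \llbracket \phi_T^4 \rrbracket\right)^2\right] = 24 \, |\Lambda| \int_\Lambda C_T(z)^4 \, \mathd z.$$

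What remains is to show $\int_\Lambda C_T(z)^4\, \mathd z \lesssim T$. For this I would pass to the Fourier side: by Parseval, with $\hat C_T(n) = \theta_T(n)^2/\langle n\rangle^2$,
$$\int_\Lambda C_T(z)^4\, \mathd z = (2\pi)^3 \sum_{n \in \mathbbm{Z}^3} \bigl((\hat C_T * \hat C_T)(n)\bigr)^2.$$
Since $\theta_T$ is essentially the indicator of $\{|\xi| \lesssim T\}$, the convolution $\hat C_T * \hat C_T$ is supported in $\{|n| \lesssim T\}$, and comparison with the Euclidean integral $\int_{\mathbbm{R}^3} \langle m\rangle^{-2} \langle n-m\rangle^{-2}\, \mathd m$, which scales as $|n|^{3-4} = |n|^{-1}$, yields
$$(\hat C_T * \hat C_T)(n) \lesssim \frac{\mathbbm{1}_{\{|n| \lesssim T\}}}{\langle n\rangle}.$$
Summing the square gives $\sum_n ((\hat C_T * \hat C_T)(n))^2 \lesssim \sum_{|n|\lesssim T} \langle n\rangle^{-2} \lesssim T$, which is the desired bound.

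Combining the two steps produces $\mathbbm{E}[(\int_\Lambda \llbracket \phi_T^4 \rrbracket)^2] \lesssim T$; dividing by $T^{1+\delta}$ gives $O(T^{-\delta}) \to 0$ and proves the lemma. The only non-routine ingredient is the linear divergence $\int C_T^4 \sim T$, which is precisely the superficial degree of divergence of the one-loop ``fish'' four-leg graph in three dimensions --- the same power counting responsible for the divergence of the mass counterterm $a_T$ in $V_T$. Note that no subsequence $T_n$ is needed at this stage: the statement is an $L^2$-convergence along the continuous parameter $T$, and a subsequence will only be extracted later in order to promote $L^2$-convergence to the almost-sure statement $\mathbbm{P}(W_\infty \in S) = 1$ via Borel--Cantelli.
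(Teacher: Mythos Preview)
Your proof is correct. Both arguments end with the same quantitative conclusion $\mathbbm{E}\bigl[(\int_\Lambda \llbracket (\theta_T W_\infty)^4\rrbracket)^2\bigr]\lesssim T$, but the routes differ. The paper represents $\int_\Lambda \llbracket (\theta_T W_\infty)^4\rrbracket$ as an It\^o integral $\int_0^\infty \int_\Lambda \theta_T J_t \mathbbm{W}_t^{\theta_T,3}\,\mathd X_t$, applies It\^o isometry, and bounds the resulting time integral by combining $\mathbbm{E}\|\mathbbm{W}_t^{\theta_T,3}\|_{L^2}^2\lesssim t^3$ with the smoothing $\|J_t f\|_{L^2}\lesssim t^{-3/2}\|f\|_{L^2}$, so that the integrand is $O(1)$ on $[0,T]$. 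Your approach bypasses the scale variable entirely: you use the Wick-chaos identity $\mathbbm{E}[\llbracket\phi_T(x)^4\rrbracket\llbracket\phi_T(y)^4\rrbracket]=24\,C_T(x-y)^4$ and then a direct Fourier estimate of $\int_\Lambda C_T^4$, which is exactly the sunset/fish power counting you mention. Your version is more elementary and self-contained; the paper's version has the advantage that the intermediate quantity $\theta_T J_t \mathbbm{W}_t^{\theta_T,3}$ reappears in the subsequent lemmas (in particular in the decomposition of $\int_\Lambda \llbracket(\theta_T W_\infty)^4\rrbracket$ under $\mathbbm{Q}^u$), so the stochastic-integral representation is doing double duty there. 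Your closing remark about the subsequence is also accurate and matches how the paper proceeds.
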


\begin{proof}
  Wick products corresponds to iterated Ito integrals. Introducing the
  notation
  \[ \mathd w_t^{\theta_T} = \theta_T J_t \mathd X_t, \]
  we can verify by Ito formula that
  \[ \int_{\Lambda} \llbracket \theta_T W_{\infty}^4 \rrbracket =
     \int^{\infty}_0 \int_{\Lambda} \mathbbm{W}_t^{\theta_T, 3} \mathd
     w^{\theta_T}_t = \int^{\infty}_0 \int_{\Lambda} \theta_T J_t
     \mathbbm{W}_t^{\theta_T, 3} \mathd X_t . \]
  Since $\theta_T J_t = 0$ for $t \geqslant T$, Ito isometry gives
  \[ \mathbbm{E} \left| \int^{\infty}_0 \int_{\Lambda} \theta_T J_t
     \mathbbm{W}_t^{\theta_T, 3} \mathd X_t \right|^2 =\mathbbm{E} \int^T_0
     \int_{\Lambda} (\theta_T J_t \mathbbm{W}_t^{\theta_T, 3})^2 \mathd t. \]
  Then, again by Ito formula the expectation on the r.h.s. can be estimated as
  \[\begin{array}{lll}
    \mathbbm{E} \left[ \int_{\Lambda} (\mathbbm{W}_t^{\theta_T, 3})^2 \right]
    & = & 4\mathbbm{E} \left[ \left| \sum_{k_1, k_2, k_3} \int^t_0
    \int^{s_1}_0 \int^{s_2}_0 \mathd w_{s_1}^{\theta_T} (k_1) \mathd
    w_{s_2}^{\theta_T} (k_2) \mathd w^{\theta_T}_{s_3} (k_3) \right|^2
    \right]\\
    & = & 24\mathbbm{E} \left[ \sum_{k_1, k_2, k_3} \int^t_0 \int^{s_1}_0
    \int^{s_2}_0 \frac{\theta^2_T (k_1) \sigma^2_{s_1} (k_1)_{}}{\langle k_1
    \rangle^2} \frac{\theta_T^2 (k_2) \sigma^2_{s_2} (k_2)}{\langle k_2
    \rangle^2} \frac{\theta^2_T (k_3) \sigma^2_{s_3} (k_3)}{\langle k_3
    \rangle^2} \mathd s_1 \mathd s_2 \mathd s_3 \right]\\
    & \leqslant & 24\mathbbm{E} \left[ \sum_{k_1, k_2, k_3} \int^t_0 \int^t_0
    \int^t_0 \frac{\sigma^2_{s_1} (k_1)_{}}{\langle k_1 \rangle^2}
    \frac{\sigma^2_{s_2} (k_2)}{\langle k_2 \rangle^2} \frac{\sigma^2_{s_3}
    (k_3)}{\langle k_3 \rangle^2} \mathd s_1 \mathd s_2 \mathd s_3 \right]\\
    & \lesssim & t^3
  \end{array}
  \]
  Now recall that $\| J_t f \|_{L^2 (\Lambda)} \lesssim \langle t \rangle^{- 3
  / 2} \| f \|_{L^2 (\Lambda)}$ to conclude:
  \[ \mathbbm{E} \left[ \frac{1}{T^{1 + \delta}} \int^T_0 \int_{\Lambda}
     (\theta_T J_t \mathbbm{W}_t^{\theta_T, 3})^2 \mathd t \right] \leqslant
     \frac{1}{T^{1 + \delta}} \int^T_0 \frac{1}{t^3} \mathbbm{E} [\| (\theta_T
     \mathbbm{W}_t^{\theta_T, 3}) \|_{L^2 (\Lambda)}^2] \mathd t \rightarrow 0
  \]
\end{proof}

The lemma implies that $\frac{1}{T^{(1 + \delta) / 2}} \int_{\Lambda}
\llbracket (\theta_T W_{\infty})^4 \rrbracket \rightarrow 0$ in $L^2
(\mathbbm{P})$. So there exists a subsequence $T_n$ such that
$\frac{1}{T_n^{(1 + \delta) / 2}} \int_{\Lambda} \llbracket (\theta_{T_n}
W_{\infty})^4 \rrbracket \rightarrow 0$ almost surely.

\

The next step of the proof is to check that $\mathbbm{Q}^u (W_{\infty} \in S)
= 0$. More concretely we will show that for a subsequence of $T_n$ (not
relabeled)
\[ \frac{1}{T_n^{1 - \delta}} \int_{\Lambda} \llbracket (\theta_{T_n}
   W_{\infty})^4 \rrbracket \rightarrow - \infty, \]
$\mathbbm{Q}^u$ almost surely. Observe that
\[ \begin{array}{lll}
     \int_{\Lambda} \llbracket (\theta_T W_{\infty})^4 \rrbracket & = &
     \int^{\infty}_0 \int_{\Lambda} \theta_T J_t \mathbbm{W}_t^{\theta_T, 3}
     \mathd X_t\\
     & = & \int^{\infty}_0 \int_{\Lambda} \theta_T J_t
     \mathbbm{W}_t^{\theta_T, 3} \mathd X^u_t + \int^{\infty}_0 \int_{\Lambda}
     \theta_T J_t \mathbbm{W}_t^{\theta_T, 3} u_t \mathd t\\
     & = & \int^{\infty}_0 \int_{\Lambda} \theta_T J_t
     \mathbbm{W}_t^{\theta_T, 3} \mathd X^u_t - \lambda \int^{\infty}_0
     \int_{\Lambda} (\theta_T J_t \mathbbm{W}_t^{\theta_T, 3}) J_t
     \mathbbm{W}_t^{u, 3} \mathd t\\
     &  & - \lambda \int^{\infty}_{\bar{T}} \int_{\Lambda} (\theta_T J_t
     \mathbbm{W}_t^{\theta_T, 3}) J_t (\mathbbm{W}_t^{u, 2} \succ I^{\flat}_t
     (u)) \mathd t\\
     &  & - \int^{\infty}_0 \int_{\Lambda} (\theta_T J_t
     \mathbbm{W}_t^{\theta_T, 3}) J_t \langle D \rangle^{- 1 / 2} \llbracket
     (\langle D \rangle^{- 1 / 2} W^u_t)^n \rrbracket \mathd t.
   \end{array} \]
\[ \  \]
We expect the term
\[ \int^{\infty}_0 \int_{\Lambda} (\theta_T J_t \mathbbm{W}_t^{\theta_T, 3})
   J_t \mathbbm{W}_t^{u, 3} \mathd t \]
to go to infinity faster than $T^{1 - \delta}$, $\mathbbm{Q}^u$-almost surely.
To actually prove it, we start by a computation in average.

\begin{lemma}
  It holds
  \[ \lim_{T \rightarrow \infty} \frac{1}{T^{1 - \delta}} \mathbbm{E} \left[
     \int^{\infty}_0 \int_{\Lambda} (\theta_T J_t \mathbbm{W}_t^{\theta_T, 3})
     J_t \mathbbm{W}_t^3 \mathd t \right] = \infty . \]
\end{lemma}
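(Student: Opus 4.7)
The plan is to compute the expectation explicitly in Fourier via Wick's theorem, recognise the resulting integrand as a termwise non-negative sum over frequency triples, and then extract the claimed divergence by restricting to a single, well-chosen dyadic block. Since $\mathbbm{W}_t^{\theta_T, 3} = 4 \llbracket (\theta_T W_t)^3 \rrbracket$ and $\mathbbm{W}_t^3 = 4 \llbracket W_t^3 \rrbracket$ are cubic Wick polynomials of the jointly centered Gaussian pair $(\theta_T W_t, W_t)$, the Wick isometry gives
\[ \mathbbm{E}[\mathbbm{W}_t^{\theta_T, 3}(x)\, \mathbbm{W}_t^3(y)] = 96\, C_t^{\theta_T}(x-y)^3, \qquad C_t^{\theta_T}(z) \assign \sum_{n \in \mathbbm{Z}^3} \theta_T(n) \frac{\rho_t^2(n)}{\langle n \rangle^2} e^{i \langle n, z \rangle}. \]

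Applying $\theta_T J_t$ in $x$ and $J_t$ in $y$ to this identity, setting $y = x$, integrating over $\Lambda$ (translation invariance) and then in $t$ by Fubini (everything is non-negative), one obtains, for some explicit constant $c > 0$,
\[ \mathbbm{E}\Bigl[\int_0^\infty \!\!\int_\Lambda (\theta_T J_t \mathbbm{W}_t^{\theta_T,3})(x)(J_t \mathbbm{W}_t^3)(x)\,\mathd x\,\mathd t \Bigr] = c\!\sum_{n_1, n_2, n_3 \in \mathbbm{Z}^3}\!\frac{\theta_T(n_1)\theta_T(n_2)\theta_T(n_3)\theta_T(k)}{\langle n_1 \rangle^2\langle n_2 \rangle^2\langle n_3 \rangle^2\langle k \rangle^2}\, I(n,k), \]
where $k \assign n_1+n_2+n_3$ and $I(n,k) \assign \int_0^\infty \rho_t^2(n_1)\rho_t^2(n_2)\rho_t^2(n_3)\sigma_t^2(k)\,\mathd t \geqslant 0$. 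Since every term is non-negative, any sub-sum provides a valid lower bound.

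To obtain a lower bound of order $T$, I restrict to the set $\mathcal{S}_T \assign \{(n_1, n_2, n_3) \in (\mathbbm{Z}^3)^3 : |n_i| \in [T/10, T/9]\ \text{for each } i,\ |n_1+n_2+n_3| \in [T/5, T/4]\}$. On $\mathcal{S}_T$ all frequencies lie comfortably below $T/2$, so $\theta_T(n_i) = \theta_T(k) = 1$ and $\langle n_i \rangle, \langle k \rangle \sim T$. The support of $\sigma_t^2(k)$ in $t$ is contained in $[\langle k \rangle, 10\langle k \rangle/9]$, which sits above $10\langle n_i \rangle/9$, hence $\rho_t^2(n_i) = 1$ on this time window; consequently $I(n,k) = \int_0^\infty \sigma_t^2(k)\,\mathd t = \rho_\infty^2(k) - \rho_0^2(k) = 1$, and each term in $\mathcal{S}_T$ contributes a quantity of order $T^{-8}$. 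A scaling argument ($n_i = T m_i$) combined with the observation that the analogous real set at $T = 1$ is open and non-empty (e.g.\ it contains a neighbourhood of $m_1 = m_2 = (1/10, 0, 0)$, $m_3 = (-0.02, 0.1, 0)$) shows $|\mathcal{S}_T| \gtrsim T^9$. Combining, the left-hand side is $\gtrsim T^9 \cdot T^{-8} = T$, so after division by $T^{1-\delta}$ the quantity grows like $T^\delta \to \infty$, which is the claim.

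The substantive step is the Wick identity together with the verification that on $\mathcal{S}_T$ the support conditions on $\sigma_t$, $\rho_t$ and $\theta_T$ are geometrically compatible so that the $t$-integral collapses to $1$; the pointwise size bounds and lattice point count are then elementary, the underlying picture being three dyadic shells in $\mathbbm{Z}^3$ whose Minkowski sum meets a further dyadic shell of comparable scale with positive density.
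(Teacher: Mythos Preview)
Your argument is correct and follows essentially the same route as the paper: both compute the expectation as a non-negative Fourier sum (the paper via iterated It\^o integrals and It\^o isometry, you via the Wick contraction formula directly) and then lower-bound by restricting to a single dyadic block of frequencies where all cutoffs trivialise, producing a bound of order $T$. One cosmetic point: your exhibited sample point $m_1=m_2=(1/10,0,0)$ has $|m_1|=1/10$ on the boundary of the annulus, so perturb it slightly inward before claiming a full neighbourhood lies in the set.
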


\begin{proof}
  Recall that $\mathd w_t^{\theta_T} = \theta_T J_t \mathd X_t$. With a slight
  abuse of notation we can write
  
  \[ \begin{array}{lll}
       &  & \int^{\infty}_0 \int_{\Lambda} (\theta_T J_t
       \mathbbm{W}_t^{\theta_T, 3}) J_t \mathbbm{W}_t^3 \mathd t\\
       & = & 16 \int^{\infty}_0 \sum_k \frac{\theta_T (k) \sigma^2_t
       (k)}{\langle k \rangle^2} \left( \sum_{k_1 + k_2 + k_3 = k} \int^t_0
       \int^{s_1}_0 \int^{s_2}_0 \mathd w_{s_1}^{\theta_T} (k_1) \mathd
       w_{s_2}^{\theta_T} (k_2) \mathd w^{\theta_T}_{s_3} (k_3) \right.\\
       &  & \times \left. \sum_{k_1 + k_2 + k_3 = k} \int^t_0 \int^{s_1}_0
       \int^{s_2}_0 \mathd w_{s_1} (k_1) \mathd w_{s_2} (k_2) \mathd w_{s_3}
       (k_3)  \right) \mathd t
     \end{array}  \]
  
  and by Ito isometry
  \[ \begin{array}{lll}
       &  & \mathbbm{E} \left. \Biggl[ \sum_{k_1 + k_2 + k_3 = k} \int^t_0
       \int^{s_1}_0 \int^{s_2}_0 \mathd w_{s_1}^{\theta_T} (k_1) \mathd
       w_{s_2}^{\theta_T} (k_2) \mathd w^{\theta_T}_{s_3} (k_3)  \right.\\
       &  & \times \sum_{k_1 + k_2 + k_3 = k} \left. \int^t_0 \int^{s_1}_0
       \int^{s_2}_0 \mathd w_{s_1} (k_1) \mathd w_{s_2} (k_2) \mathd w_{s_3}
       (k_3) \right]\\
       & = & 6 \sum_{k_1 + k_2 + k_3 = k} \int^t_0 \int^{s_1}_0 \int^{s_2}_0
       \frac{\theta_T (k_1) \sigma^2_{s_1} (k_1)}{\langle k_1 \rangle^2}
       \frac{\theta_T (k_2) \sigma^2_{s_2} (k_2)}{\langle k_2 \rangle^2}
       \frac{\theta_T (k_3) \sigma^2_{s_3} (k_3)}{\langle k_3 \rangle^2}
       \mathd s_1 \mathd s_2 \mathd s_3
     \end{array} \]
  For $T$ large enough and since $\sigma^2$ and $\theta$ are positive, we have
 {\footnotesize
 \[ \begin{array}{lll}
    &  & \int^{\infty}_0 \sum_k \frac{\theta_T (k) \sigma^2_t (k)}{\langle k
    \rangle^2} \sum_{k_1 + k_2 + k_3 = k} \int^t_0 \int^{s_1}_0 \int^{s_2}_0
    \frac{\theta_T (k_1) \sigma^2_{s_1} (k_1)}{\langle k_1 \rangle^2}
    \frac{\theta_T (k_2) \sigma^2_{s_2} (k_2)}{\langle k_2 \rangle^2}
    \frac{\theta_T (k_3) \sigma^2_{s_3} (k_3)}{\langle k_3 \rangle^2} \mathd
    s_1 \mathd s_2 \mathd s_3 \mathd t\\
    & \geqslant & \int^{T / 2}_{T / 8} \sum_k \frac{\sigma^2_t (k)}{\langle k
    \rangle^2} \sum_{k_1 + k_2 + k_3 = k} \int^{T / 8}_0 \int^{s_1}_0
    \int^{s_2}_0 \frac{\sigma^2_{s_1} (k_1)}{\langle k_1 \rangle^2}
    \frac{\sigma^2_{s_2} (k_2)}{\langle k_2 \rangle^2} \frac{\sigma^2_{s_3}
    (k_3)}{\langle k_3 \rangle^2} \mathd s_1 \mathd s_2 \mathd s_3 \mathd t
  \end{array}\]}
  Introduce the notation $\mathbbm{Z}_+^3 = \{ n \in \mathbbm{Z}^3 : n = (n_1,
  n_2, n_3) \tmop{with} n_i \geqslant 0 \}$. After restricting the sum to
  $(\mathbbm{Z}_+^3)^3$ we get the bound
   {\footnotesize \[
  \begin{array}{ll}
       \geqslant & \int^{T / 2}_{T / 8} \sum_k \frac{\sigma^2_t (k)}{\langle k
       \rangle^2} \sum_{\tmscript{\begin{array}{l}
         k_1, k_2, k_3 \in \mathbbm{Z}_+^3\\
         k_1 + k_2 + k_3 = k
       \end{array}}} \int^{T / 8}_{3 T / 32} \int^{s_1}_{3 T / 32}
       \int^{s_2}_{3 T / 32} \frac{\sigma^2_{s_1} (k_1)}{\langle k_1
       \rangle^2} \frac{\sigma^2_{s_2} (k_2)}{\langle k_2 \rangle^2}
       \frac{\sigma^2_{s_3} (k_3)}{\langle k_3 \rangle^2} \mathd s_1 \mathd
       s_2 \mathd s_3 \mathd t\\
       \gtrsim & \frac{1}{T^2} \sum_{k \in \mathbbm{Z}_+^3} (\rho_{T / 2} (k)
       - \rho_{T / 8} (k)) \sum_{\tmscript{\begin{array}{l}
         k_1, k_2, k_3 \in \mathbbm{Z}_+^3\\
         k_1 + k_2 + k_3 = k
       \end{array}}} \int^{T / 8}_{3 T / 32} \int^{s_1}_{3 T / 32}
       \int^{s_2}_{3 T / 32} \frac{\sigma^2_{s_1} (k_1)}{\langle k_1
       \rangle^2} \frac{\sigma^2_{s_2} (k_2)}{\langle k_2 \rangle^2}
       \frac{\sigma^2_{s_3} (k_3)}{\langle k_3 \rangle^2} \mathd s_1 \mathd
       s_2 \mathd s_3
     \end{array} \]}
  Now, for large enough $T$ if $k_1 + k_2 + k_3 = k$ and $\langle k_i \rangle
  \leqslant T / 8$ then $\langle k \rangle \leqslant T / 2 \times 0.9$. \
  Furthermore if $T$ large enough and $k_1, k_2, k_3 \in \mathbbm{Z}_+^3$ and
  $k_1 + k_2 + k_3 = k$, while $\langle k_i \rangle \geqslant (3 T / 32)
  \times 0.9$ (recall that if $\langle k_i \rangle < (3 T / 32) \times
  0.9$\quad and $s > 3 T / 32$ then $\sigma_s (k_1) = 0$) we have \ $\langle k
  \rangle \geqslant T / 8$. So for any $k$ for which the integral is nonzero
  we have \ $\rho_{T / 2} (k) - \rho_{T / 8} (k) = 1$ (recall that $\rho = 1$
  on $B (0, 9 / 10)$ and $\rho = 0$ outside of $B (0, 1)$). This implies
  {\footnotesize
  \begin{eqnarray*}
    &  & \frac{1}{T^2} \sum_{k \in \mathbbm{Z}_+^3} (\rho_{T / 2} (k) -
    \rho_{T / 8} (k)) \sum_{\tmscript{\begin{array}{l}
      k_1, k_2, k_3 \in \mathbbm{Z}_+^3\\
      k_1 + k_2 + k_3 = k
    \end{array}}} \int^{T / 8}_{3 T / 32} \int^{s_1}_{3 T / 32} \int^{s_2}_{3
    T / 32} \frac{\sigma^2_{s_1} (k_1)}{\langle k_1 \rangle^2}
    \frac{\sigma^2_{s_2} (k_2)}{\langle k_2 \rangle^2} \frac{\sigma^2_{s_3}
    (k_3)}{\langle k_3 \rangle^2} \mathd s_1 \mathd s_2 \mathd s_3\\
    & = & \frac{1}{T^2} \sum_{\tmscript{\begin{array}{l}
      k_1, k_2, k_3 \in \mathbbm{Z}_+^3
    \end{array}}} \int^{T / 8}_{3 T / 32} \int^{s_1}_{3 T / 32} \int^{s_2}_{3
    T / 32} \frac{\sigma^2_{s_1} (k_1)}{\langle k_1 \rangle^2}
    \frac{\sigma^2_{s_2} (k_2)}{\langle k_2 \rangle^2} \frac{\sigma^2_{s_3}
    (k_3)}{\langle k_3 \rangle^2} \mathd s_1 \mathd s_2 \mathd s_3\\
    & \gtrsim & T
  \end{eqnarray*}}
  \end{proof}

Next we upgrade this bound to almost sure divergence.

\begin{lemma}
  There exists a $\delta_0 > 0$ such that for any $\delta_0 \geqslant \delta >
  0$, ,there exists a sequence $(T_n)_n$ such that $\mathbbm{P}- \tmop{almost}
  \tmop{surely}$
  \[ \frac{1}{T_n^{1 - \delta}} \int^{\infty}_0 \int_{\Lambda} \left(
     \theta_{T_n} J_t \mathbbm{W}_t^{\theta_{T_n}, 3} \right) J_t
     \mathbbm{W}_t^3 \mathd t \rightarrow \infty . \]
\end{lemma}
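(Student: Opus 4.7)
The plan is to upgrade the in-mean lower bound of the preceding lemma to an almost sure statement along a subsequence via a second-moment plus Borel--Cantelli argument. Set
\[ Y_T \assign \int^{\infty}_0 \int_{\Lambda} \left( \theta_T J_t \mathbbm{W}_t^{\theta_T, 3} \right) J_t \mathbbm{W}_t^3 \mathd t, \]
so that $\mathbbm{E}[Y_T] \gtrsim T$ by the preceding lemma. Since both $\mathbbm{W}_t^{\theta_T,3}$ and $\mathbbm{W}_t^3$ lie in the third Wiener chaos, $Y_T$ is a polynomial of degree at most $6$ in the underlying Gaussians $(B^n)_n$, hence belongs to a finite sum of Wiener chaoses on which all $L^p$ norms are comparable by hypercontractivity. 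In particular it is enough to work with the second moment.

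The core step I would carry out is a variance estimate of the form
\[ \mathrm{Var}(Y_T) \;=\; \mathbbm{E}[Y_T^2] - \mathbbm{E}[Y_T]^2 \;\lesssim\; T^{2 - c_0} \]
for some fixed $c_0 > 0$. To obtain this I would expand $Y_T^2$ via Wick's theorem and the It\^o isometry: the result is a sum over all pairings of the six It\^o integrators appearing in the first copy of $Y_T$ with those in the second. The unique ``disconnected'' pairing, in which each copy is contracted internally exactly as in the computation of $\mathbbm{E}[Y_T]$ done in the previous lemma, reproduces $\mathbbm{E}[Y_T]^2 \sim T^2$. Every other pairing involves at least one cross-contraction between the two copies, which identifies two momenta across copies and thus imposes at least one extra linear constraint on the momentum variables appearing in the resulting sum
\[ \sum_{k_1,\dots,k_m} \int \prod_j \frac{\theta_T^{\alpha_j}(k_j)\,\sigma^2_{s_j}(k_j)}{\langle k_j\rangle^2}\,\mathd s_j . \]
Since each free momentum sum can contribute at most a factor of $T$ under the cutoffs $|k| \lesssim s \lesssim T$ inherited from the supports of $\sigma^2_s$ and $\theta_T$ (this is exactly the power-counting used to derive the lower bound $\mathbbm{E}[Y_T] \gtrsim T$), every such ``connected'' diagram is bounded by $T^{2 - c_0}$ for a $c_0>0$ depending only on the combinatorics, not on $T$.

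This power-counting over the Wick pairings---tracking the $\sigma_s^2$ time supports and the frequency cutoffs $\theta_T$ across all admissible diagrams to extract a uniform gain of a fixed positive power of $T$---is the main technical obstacle. It is of the usual connected-diagram type familiar from perturbative Gaussian computations, but requires a careful case analysis over the possible pairings (two through six cross-contractions). Once the variance bound is in hand, the conclusion is routine: Chebyshev's inequality gives
\[ \mathbbm{P}\bigl( Y_T \leqslant \tfrac{1}{2}\mathbbm{E}[Y_T] \bigr) \;\leqslant\; \frac{4\,\mathrm{Var}(Y_T)}{\mathbbm{E}[Y_T]^2} \;\lesssim\; T^{-c_0}, \]
so fixing any $\delta_0 \in (0, c_0)$ and choosing $T_n$ so that $\sum_n T_n^{-c_0} < \infty$ (e.g.\ $T_n = 2^n$), the first Borel--Cantelli lemma yields $Y_{T_n} \geqslant \tfrac{1}{2}\mathbbm{E}[Y_{T_n}] \gtrsim T_n$ eventually, $\mathbbm{P}$-a.s. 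Hence for every $\delta \in (0,\delta_0]$ one has $T_n^{-(1-\delta)}Y_{T_n} \gtrsim T_n^{\delta} \to \infty$ almost surely, and a further thinning allows us to match the subsequence already fixed earlier in the singularity argument.
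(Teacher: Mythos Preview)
Your approach is genuinely different from the paper's. The paper does not compute a variance at all; instead it introduces the auxiliary functional
\[ G_T \assign \frac{1}{T^{1-\delta}}\, Y_T + \sup_{t<\infty} \|W_t\|^K_{\VV^{-1/2-\varepsilon}} \]
and shows $\mathbbm{E}[e^{-G_T}] \to 0$ via the Bou\'e--Dupuis formula. The variational lower bound on $-\log\mathbbm{E}[e^{-G_T}]$ produces, after expanding the Wick cubes of $W+I(v)$, cross-terms $\int A^i_t B^j_t$ that are absorbed by the coercive pieces $\tfrac12\int \|v_t\|^2_{L^2}$ and $\sup_t\|I_t(v)\|^K_{\VV^{-1/2-\varepsilon}}$ (using the analytic Lemmas~\ref{analyticestimate2} and~\ref{analyticestimate4}), leaving exactly the diverging deterministic contribution $T^{-(1-\delta)}\mathbbm{E}[Y_T]$. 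A subsequence with $e^{-G_{T_n}}\to 0$ a.s.\ then gives the claim.

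Your second-moment route is the natural classical alternative, and if the bound $\mathrm{Var}(Y_T)\lesssim T^{2-c_0}$ holds then Chebyshev plus Borel--Cantelli finishes the argument cleanly. But the variance bound is exactly where the work lies, and your sketch does not establish it. The heuristic ``a cross-contraction imposes an extra linear constraint, hence saves a power of $T$'' is not automatic: the momentum sums already carry the constraints $k_1+k_2+k_3=k$ from each cube together with the identification of the two $k$'s from the spatial integral, and cross-contractions between the two copies of $Y_T$ interact with these in ways that must be checked diagram by diagram (some pairings may yield only logarithmic gains, or push the divergence into the $t$-integrals through the supports of $\sigma^2_s$). You acknowledge this (``requires a careful case analysis''), but that case analysis \emph{is} the proof, and the appeal to hypercontractivity only equates $L^p$ norms and does not by itself give the needed gap between $\mathbbm{E}[Y_T^2]$ and $\mathbbm{E}[Y_T]^2$. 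What the paper's Bou\'e--Dupuis argument buys is precisely the avoidance of this combinatorial bookkeeping: it recycles analytic estimates already proved for the absolute-continuity part, at the cost of a less elementary variational machinery.
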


\begin{proof}
  Define
  \[ G_T \assign \frac{1}{T^{1 - \delta}} \int^{\infty}_0 \int_{\Lambda}
     (\theta_T J_t \mathbbm{W}_t^{\theta_T, 3}) J_t \mathbbm{W}_t^3 \mathd t +
     \sup_{t < \infty} \| W_t \|^K_{\VV^{- 1 / 2 - \varepsilon}} . \]
  We will show that $e^{- G_T} \rightarrow 0$ in $L^1 (\mathbbm{P})$, which
  implies that there exists a subsequence $T_n$ such that $e^{- G_{T_n}}
  \rightarrow 0$ almost surely. From this our statement follows. By the
  Bou{\'e}--Dupuis formula
  \begin{eqnarray*}
    &  & \\
    - \log \mathbbm{E} [e^{- G_T}] & = & \inf_{v \in \mathbbm{H}_a}
    \mathbbm{E} \left[ \frac{1}{T^{1 - \delta}} 16 \int^{\infty}_0
    \int_{\Lambda} (\theta_T J_t \llbracket \theta_T ((W_t + I_t (v))^3)
    \rrbracket) J_t \llbracket (W_t + I_t (v))^3 \rrbracket \mathd t +
    \right.\\
    &  & \qquad \qquad \left. + \sup_{t < \infty} \| W_t + I_t (v)
    \|^K_{\VV^{- 1 / 2 - \varepsilon}} + \frac{1}{2} \int^{\infty}_0 \| v_t
    \|^2_{L^2} \mathd t \right]\\
    & = & \inf_{v \in \mathbbm{H}_a} \mathbbm{E} \left[ \frac{1}{T^{1 -
    \delta}} \int^{\infty}_0 \int_{\Lambda} (\theta_T J_t
    \mathbbm{W}_t^{\theta_T, 3}) J_t \mathbbm{W}_t^3 \mathd t \right. +\\
    &  & \qquad \qquad + \frac{1}{T^{1 - \delta}} \sum_{(i, j) \in \{ 0, 1,
    2, 3 \}^2 \setminus (0, 0)} \int^T_0 \int_{\Lambda} A_t^i B_t^j \mathd t\\
    &  & \qquad \qquad \left. + \sup_{t < \infty} \| W_t + I_t (v)
    \|^K_{\VV^{- 1 / 2 - \varepsilon}} + \frac{1}{2} \int^{\infty}_0 \| v_t
    \|^2_{L^2} \mathd t \right]\\
    & \geqslant & \inf_{v \in \mathbbm{H}_a} \mathbbm{E} \left[ \frac{1}{T^{1
    - \delta}} \int^{\infty}_0 \int_{\Lambda} (\theta_T J_t
    \mathbbm{W}_t^{\theta_T, 3}) J_t \mathbbm{W}_t^3 \mathd t \right.\\
    &  & \qquad \qquad + \frac{1}{T^{1 - \delta}} \sum_{(i, j) \in \{ 0, 1,
    2, 3 \}^2 \setminus (0, 0)} \int^T_0 \int_{\Lambda} A_t^i B_t^j \mathd t\\
    &  & \qquad \qquad \left. + \frac{1}{2} \sup_{t < \infty} \| I_t (v)
    \|^K_{\VV^{- 1 / 2 - \varepsilon}} - C \sup_{t < \infty} \| W_t
    \|^K_{\VV^{- 1 / 2 - \varepsilon}} + \frac{1}{2} \int^{\infty}_0 \| v_t
    \|^2_{L^2} \mathd t \right]
  \end{eqnarray*}
  where where have used that $\theta_T J_t = 0$ for $t \geqslant T$ and
  introduced the notations, for $0 \leqslant i \leqslant 3$,
  \[ A_t^i \assign 4 \binom{3}{i} J_t \theta_T (\llbracket (\theta_T W_t)^{3 -
     i} \rrbracket (\theta_T I_t (v))^i), \]
  and
  \[ B^i_t \assign 4 \binom{3}{i} J_t (\llbracket W_t^{3 - i} \rrbracket (I_t
     (v))^i) . \]
  Our aim now to prove that the last three terms are bounded below uniformly
  as $T \rightarrow \infty$ (while we already know that the first one
  diverges). For $i \in \{ 1, 2, 3 \}$
  \[ \| A_t^i \|^2_{L^2} + \| B_t^i \|^2_{L^2} \lesssim \langle t \rangle^{- 1
     + \delta} \left( \| I_t (u) \|^K_{\VV^{- 1 / 2 - \varepsilon}} + \| I_t
     (u) \|^2_{H^1} + Q_t (W) \right) \]
  by Lemmas~\ref{analyticestimate2} and~\ref{analyticestimate4}. Here $Q_t
  (W)$ is a random variable only depending on $W$ such that $\sup_t
  \mathbbm{E} [| Q_t (W) |^p] < \infty$ for any $p < \infty$. Then
  \[ \begin{array}{ll}
       & \frac{1}{T^{1 - \delta}} \sum_{(i, j) \in \{ 0, 1, 2, 3 \}^2
       \setminus (0, 0)} \int^T_0 \int_{\Lambda} | A_t^i B_t^j | \mathd t\\
       \leqslant & \frac{1}{T^{1 - \delta}} \sum_{(i, j) \in \{ 1, 2, 3 \}^2}
       \int^T_0 \| A_t^i \|^2_{L^2} + \| B_t^j \|^2_{L^2} \mathd t\\
       & + \frac{1}{T^{1 - \delta}} \sum_{i \in \{ 1, 2, 3 \}} \int^T_0 \|
       A_t^0 \|_{L^2}  \| B_t^i \|_{L^2} \mathd t + \frac{1}{T^{1 - \delta}}
       \sum_{i \in \{ 1, 2, 3 \}} \int^T_0 \| A_t^i \|_{L^2}  \| B_t^0
       \|_{L^2} \mathd t.
     \end{array} \]
  Now for the first term we obtain
  \[ \begin{array}{ll}
       & \mathbbm{E} \left[ \frac{1}{T^{1 - \delta}} \sum_{(i, j) \in \{ 1,
       2, 3 \}^2} \int^T_0 \| A_t^i \|^2_{L^2} + \| B_t^j \|^2_{L^2} \mathd t
       \right]\\
       = & \mathbbm{E} \left[ \frac{1}{T^{1 - \delta}} \sum_{(i, j) \in \{ 1,
       2, 3 \}^2} \int^T_0 \langle t \rangle^{- 1 + \delta} \left( \| I_t (v)
       \|^K_{\VV^{- 1 / 2 - \varepsilon}} + \| I_t (v) \|^2_{H^1} + Q_t (W)
       \right) \mathd t \right]\\
       = & \frac{C}{T^{1 - 2 \delta}} \mathbbm{E} \left[ \sup_t \left( \| I_t
       (v) \|^K_{\VV^{- 1 / 2 - \varepsilon}} + \| I_t (v) \|^2_{H^1} \right)
       \right] + \frac{C}{T^{1 - 2 \delta}} .
     \end{array} \]
  For the second term we use that $\| A_t^0 \|_{L^2} \leqslant Q_t (W)$ so
  \begin{eqnarray*}
    &  & \frac{1}{T^{1 - \delta}} \mathbbm{E} \left[ \int^T_0 \| A_t^0
    \|_{L^2}  \| B_t^i \|_{L^2} \mathd t \right]\\
    & \leqslant & \frac{1}{T^{1 - \delta}} \mathbbm{E} \left[ \int^T_0
    \langle t \rangle^{- 1 / 2} \| A_t^0 \|^2_{L^2} \mathd t + \int^T_0
    \langle t \rangle^{1 / 2} \| B_t^i \|^2_{L^2} \mathd t \right]\\
    & \lesssim & \frac{1}{T^{1 - \delta}} \mathbbm{E} \left[ \int^T_0 \langle
    t \rangle^{- 1 / 2} \| A_t^0 \|^2_{L^2} \mathd t \right]\\
    &  & + \frac{1}{T^{1 - \delta}} \mathbbm{E} \left[ \int^T_0 \langle t
    \rangle^{- 1 / 2 + \delta} \left( \| I_t (v) \|^K_{\VV^{- 1 / 2 -
    \varepsilon}} + \| I_t (v) \|^2_{H^1} + Q_t (W) \right) \mathd t \right]\\
    & \lesssim & \frac{C}{T^{1 / 2 - 2 \delta}} \mathbbm{E} \left[ \sup_t
    \left( \| I_t (v) \|^K_{\VV^{- 1 / 2 - \varepsilon}} + \| I_t (v)
    \|^2_{H^1} \right) \right] + \frac{C}{T^{1 / 2 - 2 \delta}}
  \end{eqnarray*}
  Since $\sup_t \| I_t (v) \|^2_{H^1} \lesssim \int^{\infty}_0 \| v_t
  \|^2_{L^2} \mathd t$ in total we obtain for $T$ large enough. The third term
  is estimated analogously.
  \begin{eqnarray*}
    &  & - \log \mathbbm{E} [e^{- G_T}]\\
    & \geqslant & \inf_{v \in \mathbbm{H}_a} \mathbbm{E} \left[ \frac{1}{T^{1
    - \delta}} \int^{\infty}_0 \int_{\Lambda} (\theta_T J_t
    \mathbbm{W}_t^{\theta_T, 3}) J_t \mathbbm{W}_t^3 \mathd t + \left(
    \frac{1}{2} - \frac{C}{T^{1 / 2 - 2 \delta}} \right) \sup_{t < \infty} \|
    I_t (v) \|^K_{\VV^{- 1 / 2 - \varepsilon}} \right.\\
    &  & \qquad \left. - C \sup_{t < \infty} \| W_t \|^K_{\VV^{- 1 / 2 -
    \varepsilon}} + \left( \frac{1}{2} - \frac{C}{T^{1 / 2 - 2 \delta}}
    \right) \int^{\infty}_0 \| v_t \|^2_{L^2} \mathd t - \frac{C}{T^{1 / 2 - 2
    \delta}} \right]\\
    & \geqslant & \mathbbm{E} \left[ \frac{1}{T^{1 - \delta}} \int^{\infty}_0
    \int_{\Lambda} (\theta_T J_t \mathbbm{W}_t^{\theta_T, 3}) J_t
    \mathbbm{W}_t^3 \mathd t \right] - C \rightarrow \infty
  \end{eqnarray*}
  
\end{proof}

Next we prove an estimate which will help with the proof of the main theorem.

\begin{lemma}
  \label{Qcubeestimate}We have
  \[ \sup_T \mathbbm{E}_{\mathbbm{Q}^u} \left[ \int^{\infty}_0 \int_{\Lambda}
     \frac{1}{t^{1 + \delta}} (\theta_T J_t \mathbbm{W}_t^{\theta_T, 3})^2
     \mathd t \right] < \infty . \]
  Furthermore, there exists a (deterministic) subsequence $(T_n)_n$ such that
  \
  \[ \frac{1}{T_n^{1 / 2 + \delta}} \left| \int^{\infty}_0 \int_{\Lambda}
     \theta_{T_n} J_t \mathbbm{W}_t^{\theta_{T_n}, 3} \mathd X^u_t \right|
     \rightarrow 0 \]
  $\mathbbm{Q}^u$ almost surely.
\end{lemma}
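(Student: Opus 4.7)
The plan is to establish the two claims in sequence: the first is a uniform $L^1(\mathbbm{Q}^u)$-type bound, and the almost-sure statement follows via It\^o's isometry under $\mathbbm{Q}^u$ together with an easy subsequence extraction.

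For the first inequality, I would decompose $W = \tilde W + I(u)$, where $\tilde W = W^u$ has under $\mathbbm{Q}^u$ the same law as $W$ under $\mathbbm{P}$ by Lemma~\ref{Lemma:changeofvariables}. Expanding the Wick cube polynomially yields
\[
\mathbbm{W}_t^{\theta_T,3} = \tilde{\mathbbm{W}}_t^{\theta_T,3} + \tilde{\mathbbm{W}}_t^{\theta_T,2}\,\theta_T I_t(u) + 12\,\theta_T \tilde W_t\,(\theta_T I_t(u))^2 + 4(\theta_T I_t(u))^3,
\]
where the tilded Wick powers refer to those of $\tilde W$ with the same renormalization constants (the match being possible thanks to the law identity). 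For the purely Gaussian leading contribution, a Fourier-space computation analogous to the one already carried out for $\mathbbm{E}\int_\Lambda(\mathbbm{W}_t^{\theta_T,3})^2$ earlier in the singularity section gives, uniformly in $T$,
\[
\mathbbm{E}\int_0^\infty \frac{\|\theta_T J_t \tilde{\mathbbm{W}}_t^{\theta_T,3}\|_{L^2}^2}{t^{1+\delta}}\,dt \lesssim \sum_k \frac{1}{\langle k\rangle^{3+\delta}}\sum_{k_1+k_2+k_3=k}\prod_i \frac{1}{\langle k_i\rangle^2} < \infty,
\]
where the key ingredient is the identity $\sigma_t^2\,dt = d\rho_t^2$, which after integration by parts bounds the time integral for each fixed $k$ by $\langle k\rangle^{-1-\delta}$. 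The three remainder terms above each contain a factor of $I_t(u)$ and are controlled by the crude multiplier estimate $\|\theta_T J_t f\|_{L^2}\lesssim \langle t\rangle^{-3/2}\|f\|_{L^2}$, explicit second moment bounds $\mathbbm{E}\|\tilde{\mathbbm{W}}_t^{\theta_T,2}\|_{L^2}^2 \lesssim \langle t\rangle^2$ and $\mathbbm{E}\|\tilde W_t\|_{L^2}^2 \lesssim \langle t\rangle$, together with the uniformly $\mathbbm{Q}^u$-integrable bound on $\sup_t\|I_t(u)\|_{L^\infty}$ provided by the preceding lemma.

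For the almost-sure statement, the key observation is that $\theta_T J_t \equiv 0$ whenever $t \geq T$ by property~(\ref{theta}), so the time-integrand is supported on $[0,T]$, where $1 \leq T^{1+\delta}\,t^{-1-\delta}$. Combining this with the first part yields
\[
\mathbbm{E}_{\mathbbm{Q}^u}\int_0^\infty \|\theta_T J_t \mathbbm{W}_t^{\theta_T,3}\|_{L^2}^2\, dt \lesssim T^{1+\delta}.
\]
Since $X^u$ is a cylindrical Brownian motion under $\mathbbm{Q}^u$, It\^o's isometry then gives
\[
\mathbbm{E}_{\mathbbm{Q}^u}\left|\frac{1}{T^{1/2+\delta}}\int_0^\infty\!\int_\Lambda \theta_T J_t \mathbbm{W}_t^{\theta_T,3}\,\mathd X_t^u\right|^2 \lesssim \frac{T^{1+\delta}}{T^{1+2\delta}} = T^{-\delta}\to 0,
\]
so the random variables converge to zero in $L^2(\mathbbm{Q}^u)$ and, by a standard diagonal extraction (or Borel--Cantelli along a subsequence for which $T_n^{-\delta}$ is summable), almost surely along a subsequence $(T_n)$.

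The main obstacle is the bookkeeping of the remainder contributions in the Wick expansion: after combining the polynomial $L^2$-growth of the Wick powers of $\tilde W$, the $\langle t\rangle^{-3/2}$ decay from $J_t$, the $\mathbbm{Q}^u$-integrability of $\sup_t\|I_t(u)\|_{L^\infty}$, and the $t^{-1-\delta}$ weight, one must verify that each contribution produces an expression integrable at both endpoints $t=0$ (where the processes vanish owing to the absence of active Fourier modes) and $t=\infty$. Once this accounting is in hand, only the Gaussian leading term requires a genuine computation, and for that the $\langle k\rangle^{-1-\delta}$ surplus coming from the $t^{-1-\delta}$ weight is exactly what is needed to ensure summability over $k\in\mathbbm{Z}^3$.
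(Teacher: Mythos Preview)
Your proposal is correct and follows the same strategy as the paper: expand the Wick cube via $W = W^u + I(u)$ under $\mathbbm{Q}^u$ and bound each resulting piece uniformly in $t$ and $T$. The paper's proof is terser---it cites Lemmas~\ref{analyticestimate2} and~\ref{analyticestimate4} to obtain $\mathbbm{E}_{\mathbbm{Q}^u}[\|A_t^i\|_{L^2}^2] \leqslant C$ for the cross terms rather than using the $\sup_t\|I_t(u)\|_{L^\infty}$ moment bound, and it leaves the second claim (It\^o isometry plus subsequence extraction, exactly as you wrote) entirely implicit---but the content is the same.
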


\begin{proof}
  Recall that under $\mathbbm{Q}^u$ we have $W_t = W_t^u + I_t (u)$ where $u$
  is defined above by (\ref{eq:drift}) and $\tmop{Law}_{\mathbbm{Q}^u} (W^u) =
  \tmop{Law}_{\mathbbm{P}} (W)$. With this in mind we compute
  \[ \int^T_0 \int_{\Lambda} \frac{1}{t^{1 + \delta}} (\theta_T J_t
     \mathbbm{W}_t^{\theta_T, 3})^2 \mathd t = \sum_{i, j \leqslant 3}
     \int^T_0 \int_{\Lambda} \frac{1}{t^{1 + \delta}} A_t^i A_t^j \mathd t, \]
  where, as above,
  \[ A_t^i = 4 \binom{3}{i} J_t \theta_T (\llbracket (\theta_T W^u_t)^{3 - i}
     \rrbracket (\theta_T I_t (u))^i) . \]
  By Lemmas~\ref{analyticestimate2} and~\ref{analyticestimate4} we have that
  $\mathbbm{E}_{\mathbbm{Q}^u} [\| A_t^i \|^2_{L^2}] \leqslant C$ so the
  Cauchy--Schwartz inequality gives the result.
\end{proof}

\begin{theorem}
  There exists a sequence $(T_n)_n$ such that, $\mathbbm{Q}^u$ almost surely,
  \[ \frac{1}{T^{1 - \delta}_n} \int_{\Lambda} \llbracket (\theta_{T_n}
     W_{\infty})^4 \rrbracket \rightarrow - \infty . \]
\end{theorem}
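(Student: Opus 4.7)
The plan is to start from the decomposition of $\int_\Lambda \llbracket(\theta_T W_\infty)^4\rrbracket$ displayed just before Lemma~\ref{Qcubeestimate}, and show that after dividing by $T^{1-\delta}$ only the term $-\lambda \int(\theta_T J_t \mathbbm{W}_t^{\theta_T,3}) J_t \mathbbm{W}_t^{u,3}\,\mathd t$ survives and diverges to $+\infty$ (so that multiplication by $-\lambda<0$ yields the required $-\infty$), while the other three terms are negligible $\mathbbm{Q}^u$-almost surely along a suitable subsequence $(T_n)_n$.

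For the stochastic integral $\int \theta_T J_t \mathbbm{W}_t^{\theta_T,3}\,\mathd X^u_t$, the second part of Lemma~\ref{Qcubeestimate} already gives decay like $o(T^{1/2+\delta})$ along a subsequence, which is amply sufficient. The paraproduct counterterm $\int(\theta_T J_t \mathbbm{W}_t^{\theta_T,3}) J_t(\mathbbm{W}_t^{u,2}\succ I^\flat_t(u))\,\mathd t$ and the auxiliary $\langle\mathD\rangle^{-1/2}$-counterterm are controlled by Cauchy--Schwarz in time, together with Table~\ref{table:reg}, the first part of Lemma~\ref{Qcubeestimate}, and the uniform $L^\infty$-moment bound on $I(u)$ provided by the lemma following Lemma~\ref{Lemma:changeofvariables}. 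Each of these yields an $O(T^{1/2+\varepsilon})$ contribution in $L^p(\mathbbm{Q}^u)$, hence vanishes after division by $T^{1-\delta}$ along a further subsequence.

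The heart of the proof is to show $\mathbbm{Q}^u$-almost sure divergence of the main drift term. I would mimic the $e^{-G_T}$ strategy of the preceding almost-sure-divergence lemma under $\mathbbm{Q}^u$: set
\[ G_T \assign \frac{1}{T^{1-\delta}}\int_0^\infty\int_\Lambda(\theta_T J_t \mathbbm{W}_t^{\theta_T,3}) J_t \mathbbm{W}_t^{u,3}\,\mathd t + \sup_{t<\infty}\|W_t\|^K_{\VV^{-1/2-\varepsilon}} \]
and prove $\mathbbm{E}_{\mathbbm{Q}^u}[e^{-G_T}]\to 0$. Using $\tmop{Law}_{\mathbbm{Q}^u}(W^u)=\tmop{Law}_{\mathbbm{P}}(W)$ to recast the expectation as a functional of $W^u$, and expanding $W=W^u+I(u)$ with $u=U(W^u)$, one is reduced to a Bou\'e--Dupuis variational problem under $\mathbbm{P}$. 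The $+\infty$ in-average divergence established in the lemma immediately before Lemma~\ref{Qcubeestimate} provides the leading term, while the cross terms of type $A^i_t B^j_t$ with $(i,j)\neq(0,0)$ are absorbed exactly as in the preceding lemma by means of Lemmas~\ref{analyticestimate2} and~\ref{analyticestimate4}, against the coercive regularizers $\sup_t\|I_t(v)\|^K_{\VV^{-1/2-\varepsilon}}$ and $\int\|v\|^2_{L^2}\,\mathd t$.

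The main obstacle is bookkeeping: one must carefully expand $\mathbbm{W}_t^{\theta_T,3}$ into a polynomial in $\theta_T W^u$ and $\theta_T I(u)$, and verify that each additional cross term arising from the $I(u)$ part is of order strictly less than $T^{1-\delta}$. This relies on the finite $L^p$-moments of $\sup_t\|I_t(u)\|_\infty$ under $\mathbbm{Q}^u$ and on the time-decay of $J_t$. Once $\mathbbm{E}_{\mathbbm{Q}^u}[e^{-G_T}]\to 0$ is established, $e^{-G_T}\to 0$ in $L^1(\mathbbm{Q}^u)$ yields a subsequence along which $G_{T_n}\to +\infty$ almost surely, and combining this with the earlier steps produces the desired $(T_n)_n$.
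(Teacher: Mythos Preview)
Your overall decomposition and your treatment of the three subordinate terms (the stochastic integral, the paraproduct counterterm, and the auxiliary $\langle\mathD\rangle^{-1/2}$ term) match the paper's proof essentially line for line.

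Where you diverge from the paper is in the handling of the main drift term $\int(\theta_T J_t\mathbbm{W}_t^{\theta_T,3})J_t\mathbbm{W}_t^{u,3}\,\mathd t$. You propose to re-run the $e^{-G_T}$ Bou\'e--Dupuis argument under $\mathbbm{Q}^u$, recasting it as a $\mathbbm{P}$-expectation via $W^u\mapsto W$ and then optimizing over a drift $v$. The paper does something simpler and more direct: it expands $\mathbbm{W}_t^{\theta_T,3}$ as a Wick polynomial in $\theta_T W^u$ and $\theta_T I_t(u)$, writes the leading contribution as
\[
\frac{1}{T^{1-\delta}}\int_0^T\int_\Lambda \theta_T J_t\,\mathbbm{W}_t^{\theta_T,u,3}\,J_t\mathbbm{W}_t^{u,3}\,\mathd t,
\]
and observes that this is a functional of $W^u$ \emph{alone}. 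Since $\tmop{Law}_{\mathbbm{Q}^u}(W^u)=\tmop{Law}_{\mathbbm{P}}(W)$, its $\mathbbm{Q}^u$-almost-sure divergence along a subsequence is \emph{literally the same statement in law} as the $\mathbbm{P}$-almost-sure divergence already proved in the preceding lemma; no new Bou\'e--Dupuis argument is needed. The remaining cross terms $\int A_t^i\,J_t\mathbbm{W}_t^{u,3}\,\mathd t$ for $i\geqslant 1$ are then bounded directly in $L^1(\mathbbm{Q}^u)$ using the moment bounds on $\|I_t(u)\|_{L^\infty}$ and the decay of $J_t$.

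Your route is not wrong in spirit, but it carries a real risk: once you recast as a $\mathbbm{P}$-expectation and introduce a Bou\'e--Dupuis drift $v$, the expanded functional involves $I(U(W+I(v)))$ through the relation $u=U(W^u)$. The cross terms are therefore \emph{not} ``of type $A_t^i B_t^j$ exactly as in the preceding lemma'': they contain an extra nonlinear dependence on $v$ via the map $U$, and your appeal to $L^p(\mathbbm{Q}^u)$-moments of $I(u)$ does not directly control these, since inside the variational problem you are under $\mathbbm{P}$ with an arbitrary $v$, not under $\mathbbm{Q}^u$. This can presumably be pushed through with a Gronwall estimate on $U$ as in~(\ref{boundIu}), but your proposal does not address it. The paper's shortcut---isolate the pure-$W^u$ part and transfer the existing almost-sure result by equality in law---sidesteps this entirely.
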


\begin{proof}
  We have
  \[ \int_{\Lambda} \llbracket (\theta_T W_{\infty})^4 \rrbracket =
     \int^{\infty}_0 \int_{\Lambda} \theta_T J_t \mathbbm{W}_t^{\theta_T, 3}
     \mathd X_t . \]
  Now since $\mathd X_t = \mathd X_t^u + u_t \mathd t$ we have
  \[ \begin{array}{ll}
       & \frac{1}{T^{1 - \delta}} \int^{\infty}_0 \int_{\Lambda} \theta_T J_t
       \mathbbm{W}_t^{\theta_T, 3} \mathd X_t\\
       = & \frac{1}{T^{1 - \delta}} \int^{\infty}_0 \int_{\Lambda} \theta_T
       J_t \mathbbm{W}_t^{\theta_T, 3} \mathd X^u_t + \frac{1}{T^{1 - \delta}}
       \int^{\infty}_0 \int_{\Lambda} \theta_T J_t \mathbbm{W}_t^{\theta_T, 3}
       u_t \mathd t\\
       = & \frac{1}{T^{1 - \delta}} \int^{\infty}_0 \int_{\Lambda} \theta_T
       J_t \mathbbm{W}_t^{\theta_T, 3} \mathd X^u_t - \frac{\lambda}{T^{1 -
       \delta}} \int^{\infty}_0 \int_{\Lambda} \theta_T J_t
       \mathbbm{W}_t^{\theta_T, 3} J_t \mathbbm{W}_t^{u, 3} \mathd t\\
       & - \frac{\lambda}{T^{1 - \delta}} \int^{\infty}_{\bar{T}}
       \int_{\Lambda} \theta_T J_t \mathbbm{W}_t^{\theta_T, 3} J_t
       (\mathbbm{W}_t^{u, 2} \succ I^{\flat}_t (u)) \mathd t\\
       & - \frac{1}{T^{1 - \delta}} \int^{\infty}_0 \int_{\Lambda} \theta_T
       J_t \mathbbm{W}_t^{\theta_T, 3} J_t \langle D \rangle^{- 1 / 2}
       \llbracket (\langle D \rangle^{- 1 / 2} W^u_t)^n \rrbracket \mathd t.
     \end{array} \]
  The first term goes to $0$ $\mathbbm{Q}^u$-almost surely by
  Lemma~\ref{Qcubeestimate}. To analyze the third term we estimate
  \begin{equation}
    \begin{array}{ll}
      & \frac{1}{T^{1 - \delta}} \int^{\infty}_{\bar{T}} \int_{\Lambda}
      \theta_T J_t \mathbbm{W}_t^{\theta_T, 3} J_t (\mathbbm{W}_t^{u, 2} \succ
      I^{\flat}_t (u)) \mathd t\\
      = & \frac{1}{T^{1 - \delta}} \int^T_{\bar{T}} \int_{\Lambda} \theta_T
      J_t \mathbbm{W}_t^{\theta_T, 3} J_t (\mathbbm{W}_t^{u, 2} \succ
      I^{\flat}_t (u)) \mathd t\\
      \leqslant & \frac{1}{T^{1 - \delta}} \int^T_{\bar{T}} \| \theta_T J_t
      \mathbbm{W}_t^{\theta_T, 3} \|_{L^2} \| J_t (\mathbbm{W}_t^{u, 2} \succ
      I^{\flat}_t (u)) \|_{L^2} \mathd t\\
      \lesssim & \frac{1}{T^{1 - \delta}} \int^T_{\bar{T}} t^{- 1 / 2 + \delta
      / 2} \| \theta_T J_t \mathbbm{W}_t^{\theta_T, 3} \|_{L^2}  \|
      \mathbbm{W}_t^{u, 2} \|_{\mathcal{C}^{- 1 - \delta / 2}} \| I_t (u)
      \|_{L^2} \mathd t\\
      \leqslant & T^{- 1 / 2 - 2 \delta} \left( \int^T_{\bar{T}} \| \theta_T
      J_t \mathbbm{W}_t^{\theta_T, 3} \|^2_{L^2} \mathd t \right)^{1 / 2}\\
      & \qquad \times T^{- 1 / 2 + 2 \delta} \left( \int^T_{\bar{T}} t^{- 1 +
      \delta} (\| \mathbbm{W}_t^{u, 2} \|_{\mathcal{C}^{- 1 - \delta / 2}} \|
      I_t (u) \|_{L^2})^2 \right)^{1 / 2}
    \end{array} \label{bcross1}
  \end{equation}
  By the computation from Lemma~\ref{Qcubeestimate} we have
  \[ \mathbbm{E}_{\mathbbm{Q}^u} \left[ T^{- 1 / 2 - 2 \delta} \left(
     \int^T_{\bar{T}} \| \theta_T J_t \mathbbm{W}_t^{\theta_T, 3} \|^2_{L^2}
     \mathd t \right)^{1 / 2} \right] \rightarrow 0, \]
  and $\sup_t \mathbbm{E}_{\mathbbm{Q}^u} [(\| \mathbbm{W}_t^{u, 2}
  \|_{\mathcal{C}^{- 1 - \delta / 2}} \| I_t (u) \|_{L^2})^2] < \infty$, so
  (\ref{bcross1}) converges to $0$ in $L^1 (\mathbbm{Q}^u)$. For the fourth
  term we proceed in the same way:
  \begin{eqnarray*}
    &  & \int^{\infty}_0 \int_{\Lambda} \theta_T J_t \mathbbm{W}_t^{\theta_T,
    3} J_t \langle D \rangle^{- 1 / 2} \llbracket (\langle D \rangle^{- 1 / 2}
    W^u_t)^n \rrbracket \mathd t\\
    & = & \int^T_0 \int_{\Lambda} \theta_T J_t \mathbbm{W}_t^{\theta_T, 3}
    J_t \langle D \rangle^{- 1 / 2} \llbracket (\langle D \rangle^{- 1 / 2}
    W^u_t)^n \rrbracket \mathd t\\
    & \leqslant & \int^T_0 \| \theta_T J_t \mathbbm{W}_t^{\theta_T, 3}
    \|_{L^2} \| J_t \langle D \rangle^{- 1 / 2} \llbracket (\langle D
    \rangle^{- 1 / 2} W^u_t)^n \rrbracket \|_{L^2} \mathd t\\
    & \lesssim & \int^T_0 (\| \theta_T J_t \mathbbm{W}_t^{\theta_T, 3}
    \|_{L^2}) t^{- 2 + \delta} \| \llbracket (\langle D \rangle^{- 1 / 2}
    W^u_t)^n \rrbracket \|_{H^{- \delta}} \mathd t\\
    & \leqslant & \left( \int^T_0 t^{- 2 (1 - \delta)} (\| \theta_T J_t
    \mathbbm{W}_t^{\theta_T, 3} \|_{L^2})^2 \right)^{1 / 2} \left( \int^T_0
    t^{- 2 (1 - \delta)} \| \llbracket (\langle D \rangle^{- 1 / 2} W^u_t)^n
    \rrbracket \|^2_{H^{- \delta}} \right)^{1 / 2}
  \end{eqnarray*}
  which is bounded in expectation uniformly in $T$, so the fourth term goes to
  $0$ in $L^1 (\mathbbm{Q}^u)$ as well. It remains to analyze the second term.
  Again introducing the notation
  \[ A_t^i = 4 \binom{3}{i} J_t \theta_T (\llbracket (\theta_T W^u_t)^{3 - i}
     \rrbracket (\theta_T I_t (u))^i), \]
  \[ \mathbbm{W}_t^{\theta_T, u, 3} = 4 \llbracket (\theta_T W^u_t)^3
     \rrbracket, \]
  we have
  \begin{eqnarray*}
    &  & \frac{1}{T^{1 - \delta}} \int^{\infty}_0 \int_{\Lambda} \theta_T J_t
    \mathbbm{W}_t^{\theta_T, 3} J_t \mathbbm{W}_t^{u, 3} \mathd t\\
    & = & \frac{1}{T^{1 - \delta}} \int^T_0 \int_{\Lambda} \theta_T J_t
    \mathbbm{W}_t^{\theta_T, u, 3} J_t \mathbbm{W}_t^{u, 3} \mathd t + \sum_{1
    \leqslant i \leqslant 3} \frac{1}{T^{1 - \delta}} \int^T_0 \int_{\Lambda}
    A_t^i J_t \mathbbm{W}_t^{u, 3} \mathd t.
  \end{eqnarray*}
  Now observe that
  \[ \frac{1}{T^{1 - \delta}} \int^T_0 \int_{\Lambda} \theta_T J_t
     \mathbbm{W}_t^{\theta_T, u, 3} J_t \mathbbm{W}_t^{u, 3} \mathd t_{\mathbbm{Q}^u} \sim_{\mathbbm{P} \quad} \frac{1}{T^{1 - \delta}}
     \int^T_0 \int_{\Lambda} \theta_T J_t \mathbbm{W}_t^{\theta_T, 3} J_t
     \mathbbm{W}_t^3 \mathd t_{}, \]
  so the $\limsup$of this is $\infty$ almost surely. To estimate the sum we
  again observe that for $i \geqslant 3$ $\mathbbm{E}_{\mathbbm{Q}^u} [\|
  A_t^i \|^2_{L^2}] \lesssim \langle t \rangle^{- 1 + \delta}$ and by Young's
  inequality
  \[ \begin{array}{lll}
       \int^T_0 \int_{\Lambda} A_t^i J_t \mathbbm{W}_t^{u, 3} \mathd t &
       \leqslant & \int^T_0 \int_{\Lambda} \| A_t^i \|_{L^2} \| J_t
       \mathbbm{W}_t^{u, 3} \|_{L^2} \mathd t\\
       & \leqslant & \int^T_0 \int_{\Lambda} \langle t \rangle^{1 / 3} \|
       A_t^i \|_{L^2} \langle t \rangle^{- 1 / 3} \| J_t \mathbbm{W}_t^{u, 3}
       \|_{L^2} \mathd t\\
       & \leqslant & \int^T_0 \int_{\Lambda} \langle t \rangle^{2 / 3} \|
       A_t^i \|^2_{L^2} + \int^T_0 \int_{\Lambda} \langle t \rangle^{- 2 / 3}
       \| J_t \mathbbm{W}_t^{u, 3} \|^2_{L^2} \mathd t.
     \end{array} \]
  Taking expectation we obtain
  \begin{eqnarray*}
    &  & \frac{1}{T^{1 - \delta}} \mathbbm{E} \left[ \int^T_0 \int_{\Lambda}
    A_t^i J_t \mathbbm{W}_t^{u, 3} \mathd t \right]\\
    & \leqslant & \frac{1}{T^{1 - \delta}} \mathbbm{E} \left[ \int^T_0
    \int_{\Lambda} \langle t \rangle^{2 / 3} \| A_t^i \|^2_{L^2} \right] +
    \frac{1}{T^{1 - \delta}} \mathbbm{E} \left[ \int^T_0 \int_{\Lambda}
    \langle t \rangle^{- 2 / 3} \| J_t \mathbbm{W}_t^{u, 3} \|^2_{L^2} \mathd
    t \right]\\
    & \lesssim & \frac{1}{T^{1 - \delta}} \int^T_0 \int_{\Lambda} \langle t
    \rangle^{- 1 / 3 + \delta} + \frac{1}{T^{1 - \delta}} \int^T_0
    \int_{\Lambda} \langle t \rangle^{- 2 / 3} \mathd t \rightarrow 0
  \end{eqnarray*}
  We have deduced that
  \[ \frac{1}{T^{1 - \delta}} \int_{\Lambda} \llbracket (\theta_T
     W_{\infty})^4 \rrbracket = - \frac{1}{T^{1 - \delta}} \int^T_0
     \int_{\Lambda} \theta_T J_t \mathbbm{W}_t^{\theta_T, u, 3} J_t
     \mathbbm{W}_t^{u, 3} \mathd t_{} + R_T, \]
  where $R_T \rightarrow 0$ in $L^1 (\mathbbm{Q}^u)$. We can conclude by
  selecting a subsequence $(T_n)_n$ such that
  \[ \frac{1}{T_n^{1 - \delta}} \int^{T_n}_0 \int_{\Lambda} \theta_T J_t
     \mathbbm{W}_t^{\theta_{T_n}, u, 3} J_t \mathbbm{W}_t^{u, 3} \mathd t
     \rightarrow \infty \]
  $\mathbbm{Q}^u$-almost surely and $R_{T_n} \rightarrow 0$,
  $\mathbbm{Q}^u$-almost surely.
\end{proof}

\section{Some analytic estimates }\label{sec:analytic}

We collect in this final section various technical estimates needed to
complete the proof of Lemma~\ref{lemma:estimateremainder}.

\begin{proposition}
  \label{FractionalLeibniz}Let $1 < p < \infty$ and $p_1, p_2, p_1', p_2' > 1$
  such that $\frac{1}{p_1} + \frac{1}{p_2} = \frac{1}{p_1'} + \frac{1}{p_2'} =
  \frac{1}{p}$. Then for every $s, \alpha \geqslant 0$ \
  \[ \| \langle \mathD \rangle^s (f g) \|_{L^p} \lesssim \| \langle \mathD
     \rangle^{s + \alpha} f \|_{L^{p_2}}  \| \langle \mathD \rangle^{- \alpha}
     g \|_{L^{p_1}} + \| \langle \mathD \rangle^{s + \alpha} g \|_{L^{p_1'}} 
     \| \langle \mathD \rangle^{- \alpha} f \|_{L^{p_2'}} . \]
\end{proposition}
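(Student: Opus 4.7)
The plan is to prove this Kato--Ponce type fractional Leibniz rule via the Littlewood--Paley paraproduct decomposition $fg = f\prec g + f\circ g + f\succ g$, estimating each piece separately in $W^{s,p} = \| \langle \mathD \rangle^s \cdot\|_{L^p}$. The asymmetry of the two terms in the inequality will reflect the asymmetry of the paraproducts: $f \succ g$ and the resonant $f\circ g$ will contribute the first term, while $f\prec g$ will contribute the second.

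For the high-low piece $f\succ g = \sum_j \Delta_j f \cdot S_{j-1} g$, each summand has Fourier support in an annulus of size $\sim 2^j$, so $\langle \mathD\rangle^s$ acts on the $j$th block essentially as multiplication by $2^{js}$. Hölder with $1/p = 1/p_1 + 1/p_2$ gives, at each dyadic scale,
\[
2^{js}\|\Delta_j f \cdot S_{j-1} g\|_{L^p} \lesssim \|\langle \mathD\rangle^{s+\alpha} \Delta_j f\|_{L^{p_2}} \cdot 2^{-j\alpha}\|S_{j-1} g\|_{L^{p_1}} \lesssim \|\langle \mathD\rangle^{s+\alpha} \Delta_j f\|_{L^{p_2}} \cdot \|\langle \mathD\rangle^{-\alpha} g\|_{L^{p_1}},
\]
where Bernstein's inequality provides the factor $2^{-j\alpha}$ on the low-frequency part of $g$. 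Packaging this as an $\ell^2$ square function in $j$ and invoking the $L^p$-boundedness of the Littlewood--Paley square function (valid since $1 < p < \infty$), together with the Fefferman--Stein vector-valued maximal inequality to exchange the order of $\ell^2$ and $L^p$, yields the first term of the inequality. The symmetric argument for $f\prec g$, with exponents $p_1', p_2'$, gives the second term.

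For the resonant piece $f \circ g = \sum_{|j-k|\leqslant 1} \Delta_j f \cdot \Delta_k g$ both factors live at comparable frequency $\sim 2^j$, but their product can have Fourier support all the way down to frequency zero. Consequently $\|\langle \mathD\rangle^s(f\circ g)\|_{L^p}$ has to be bounded by summing $2^{\ell s}\|\Delta_\ell(\Delta_j f \cdot \Delta_j g)\|_{L^p}$ over $\ell \leqslant j + O(1)$. Hölder plus Bernstein gives each such $j$-block a bound compatible with the first term of the inequality, and the geometric series over $\ell$ with ratio $2^{-s}$ converges precisely because $s \geqslant 0$, so this piece is absorbed into the first term.

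The main delicate point is exactly this resonant contribution: in the off-diagonal cases the dyadic blocks in the output are essentially disjoint, so the Littlewood--Paley reassembly is immediate, but the resonant piece requires summing over all output frequencies $\ell \leqslant j + O(1)$, and the hypothesis $s \geqslant 0$ is what makes the resulting geometric series converge without any further assumption. Everything else reduces to routine Bernstein and Hölder estimates at each dyadic scale; no endpoint-type subtlety arises since both $p$ and the auxiliary exponents $p_1, p_2, p_1', p_2'$ are strictly between $1$ and $\infty$.
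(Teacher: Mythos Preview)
The paper does not prove this proposition at all: its ``proof'' is the single line ``See~\cite{gulisashvili_exact_1996}.'' So your proposal already goes well beyond what the paper offers, and the paraproduct/Littlewood--Paley strategy you outline is exactly the standard route to Kato--Ponce type inequalities of this form. The decomposition into $f\succ g$, $f\prec g$, $f\circ g$, the use of H\"older at each dyadic scale, Bernstein to shuffle the $\alpha$ derivatives, and the square-function/Fefferman--Stein machinery to reassemble are all correct and are precisely how one would reconstruct the cited result.

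One small slip: in the resonant piece you say the geometric series in $\ell$ with ratio $2^{-s}$ ``converges precisely because $s\geqslant 0$''. For $s=0$ the ratio is $1$ and that sum does not converge, so your argument as written only covers $s>0$. The endpoint $s=0$ either needs a separate (trivial, when $\alpha=0$; or a slightly more careful square-function treatment when $\alpha>0$) argument, or one simply notes that in the paper the proposition is only invoked with $s=1/2>0$ (in Lemma~\ref{L4estimate}), so the issue is irrelevant for the application.
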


\begin{proof}
  See~{\cite{gulisashvili_exact_1996}}.
\end{proof}

\begin{lemma}
  There exists $\varepsilon > 0, n \in \mathbbm{N}$ such that for any $\delta
  > 0$ there exists $C_{\delta} < \infty$ for which the following inequality
  holds for any $\phi \in H^1 (\Lambda)$ \label{L4estimate}
  \[ \| \phi \|^{4 + \varepsilon}_{L^4} \leqslant C \| \phi \|^{n + 1}_{W^{- 1
     / 2, n + 1}} + \delta \| \phi \|^2_{H^1} + C_{\delta} . \]
\end{lemma}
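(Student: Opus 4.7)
The inequality is a Sobolev-type interpolation bound on the three-torus. I would argue by (i) interpolating between the Besov realizations of $W^{-1/2, n+1}$ and $H^1 = B^1_{2, 2}$, (ii) invoking the resulting Besov--Sobolev embedding into $L^{4+\varepsilon}(\Lambda)$, and (iii) concluding with Young's inequality.

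Concretely, for $\theta \in (0, 1)$, standard real or complex interpolation gives
\[
\|\phi\|_{B^{s_\theta}_{p_\theta, q_\theta}} \lesssim \|\phi\|_{W^{-1/2, n+1}}^{1-\theta} \, \|\phi\|_{H^1}^\theta,
\]
with $s_\theta = -\tfrac12 + \tfrac{3\theta}{2}$ and $1/p_\theta = (1-\theta)/(n+1) + \theta/2$. I would then choose $n \in \mathbbm{N}$ and $\theta \in (0,1)$ so that, first, the Besov--Sobolev embedding $B^{s_\theta}_{p_\theta, q_\theta}(\Lambda) \hookrightarrow L^{4+\varepsilon}(\Lambda)$ holds for some $\varepsilon > 0$ --- the sufficient condition being $s_\theta \geq 3/p_\theta - 3/(4+\varepsilon)$ --- and, second, the Young conjugacy identity
\[
\frac{(1-\theta)(4+\varepsilon)}{n+1} + \frac{\theta(4+\varepsilon)}{2} = 1
\]
is satisfied. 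A short calculation gives $\theta = 2(n-3-\varepsilon)/[(4+\varepsilon)(n-1)]$ and an embedding condition of the form $(2-\varepsilon)(n-1) \geq 6(2+\varepsilon)$, both of which can be arranged simultaneously by taking $n$ large enough (e.g.\ $n \geq 8$, with $\varepsilon < 2/13$ at $n=8$).

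Raising the interpolation bound to the power $4+\varepsilon$ and using $\|\phi\|_{L^4}^{4+\varepsilon} \lesssim \|\phi\|_{L^{4+\varepsilon}}^{4+\varepsilon}$ on the bounded torus yields
\[
\|\phi\|_{L^4}^{4+\varepsilon} \lesssim \|\phi\|_{W^{-1/2, n+1}}^{(1-\theta)(4+\varepsilon)} \, \|\phi\|_{H^1}^{\theta(4+\varepsilon)}.
\]
A final weighted Young inequality applied with conjugate exponents $(n+1)/[(1-\theta)(4+\varepsilon)]$ and $2/[\theta(4+\varepsilon)]$, tuned so that the coefficient of $\|\phi\|_{H^1}^2$ is at most $\delta$, converts the right-hand side into $C_\delta \|\phi\|_{W^{-1/2, n+1}}^{n+1} + \delta \|\phi\|_{H^1}^2$. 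Absorbing the additive constants into $C_\delta$ gives the claimed inequality.

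\textbf{Main obstacle.} There is no serious obstacle beyond the careful bookkeeping of exponents; the plan above is really a matter of solving two linear relations in $n$ and $\theta$ and checking that a strict inequality can be achieved in the Sobolev embedding. The one delicate point is to keep $n$ strictly above the critical Sobolev threshold so that the embedding into $L^{4+\varepsilon}$ is non-borderline (avoiding any restriction on the Besov summation index $q_\theta$); once that is secured, both the interpolation and the Young steps are automatic. A small additional care is needed to identify $W^{-1/2, n+1}$ with a suitable Triebel--Lizorkin or Besov space, using the standard chain of inclusions $W^{-1/2, n+1} = F^{-1/2}_{n+1, 2} \hookrightarrow B^{-1/2}_{n+1, n+1}$ for $n+1 \geq 2$, so that real/complex interpolation against $H^1 = B^1_{2,2}$ applies directly.
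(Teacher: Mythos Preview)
Your approach is correct and genuinely different from the paper's. You argue by abstract Besov interpolation between $W^{-1/2,n+1}$ and $H^1$, then invoke a single Sobolev embedding into $L^{4+\varepsilon}$ and close with Young. The paper instead runs a bootstrap on $\|\phi\|_{L^4}^4$: it writes $\int \phi^4 \leqslant \|\langle \mathD\rangle^{-1/2}\phi\|_{L^8}\,\|\langle \mathD\rangle^{1/2}\phi^3\|_{L^{8/7}}$, applies the fractional Leibniz rule (Proposition~\ref{FractionalLeibniz}) to peel off two factors of $\|\phi\|_{L^4}$, estimates $\|\langle \mathD\rangle^{1/2}\phi\|_{L^{8/3}}$ by $\|\phi\|_{H^1}^{1/2}\|\phi\|_{L^4}^{1/2}$, and then raises both sides to the power $21/20$ so that the residual $\|\phi\|_{L^4}$ factor can be absorbed into the left-hand side after a three-term Young inequality. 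The paper's argument is more elementary---it uses only duality and the Kato--Ponce estimate already quoted in the paper---while yours is cleaner but relies on identifying $W^{-1/2,n+1}$ with a Triebel--Lizorkin space and invoking complex interpolation of Besov scales. One small point: your Young step naturally yields $C_\delta\,\|\phi\|_{W^{-1/2,n+1}}^{n+1}+\delta\,\|\phi\|_{H^1}^2$ with the $\delta$-dependent constant multiplying the first term, whereas the stated inequality has a $\delta$-independent $C$ there and an additive $C_\delta$. The paper handles this by deliberately overshooting the exponent (replacing $168/5$ by $34$) and absorbing the mismatch via a second Young inequality into the additive constant; you should spell out the analogous step rather than gesture at ``absorbing the additive constants''.
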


\begin{proof}
  
  \begin{eqnarray*}
    \int \phi^4 \mathd x & \leqslant & \| \langle \mathD \rangle^{- 1 / 2}
    \phi \|_{L^8} \| \langle \mathD \rangle^{1 / 2} \phi^3 \|_{L^{8 / 7}}\\
    & \leqslant & \| \langle \mathD \rangle^{- 1 / 2} \phi \|_{L^8} \|
    \langle \mathD \rangle^{1 / 2} \phi \|_{L^{8 / 3}} \| \phi \|^2_{L^4}\\
    & \leqslant & \| \langle \mathD \rangle^{- 1 / 2} \phi \|_{L^8} \| \phi
    \|^{1 / 2}_{H^1} \| \phi \|^{5 / 2}_{L^4}
  \end{eqnarray*}
  So
  \begin{eqnarray*}
    (\| \phi \|^4_{L^4})^{21 / 20} & \leqslant & \| \langle \mathD \rangle^{-
    1 / 2} \phi \|^{21 / 20}_{L^8} \| \phi \|^{21 / 40}_{H^1} \| \phi \|^{104
    / 40}_{L^4}
  \end{eqnarray*}
  and applying Young's inequality with the exponents $(32, 32 / 9, 32 / 22)$,
  we obtain
  \[ \begin{array}{lll}
       \| \langle \mathD \rangle^{- 1 / 2} \phi \|^{21 / 20}_{L^8} \| \phi
       \|^{21 / 40}_{H^1} \| \phi \|^{104 / 40}_{L^4} & \leqslant & C_{\delta}
       \| \langle \mathD \rangle^{- 1 / 2} \phi \|^{168 / 5}_{L^8} + \delta \|
       \phi \|^{16 / 9}_{H^1} + \delta \| \phi \|^{208 / 55}_{L^4}\\
       & \leqslant & \| \langle \mathD \rangle^{- 1 / 2} \phi \|^{34}_{L^8} +
       \delta \| \phi \|^2_{H^1} + \delta (\| \phi \|^4_{L^4})^{21 / 20} +
       C_{\delta}
     \end{array} \]
  and subtracting $\delta (\| \phi \|^4_{L^4})^{21 / 20}$ on both sides of the
  inequality gives the result. 
\end{proof}

\begin{lemma}
  The following estimates hold with $\varepsilon > 0$ small
  enough\label{analyticestimate1}
  \[ \| J_t (\llbracket W_t^2 \rrbracket \succ (1 - \theta_t) I_t (w))
     \|^2_{L^2} \lesssim \frac{1}{\langle t \rangle^{1 + \varepsilon}} \left(
     \int^t_0 \| w_s \|^2 \mathd s + \| I_t (w) \|^n_{W^{- 1 / 2, n + 1}} + \|
     \llbracket W_t^2 \rrbracket \|^n_{\VV^{- 1 - \varepsilon}} \right) \]
  \[ \| J_t (\llbracket W_t^2 \rrbracket \circ I_t (w)) \|^2_{L^2} \lesssim
     \frac{1}{\langle t \rangle^{1 + \varepsilon}} \left( \int^t_0 \| w_s \|^2
     \mathd s + \| I_t (w) \|^n_{W^{- 1 / 2, n + 1}} + \| \llbracket W_t^2
     \rrbracket \|^n_{\VV^{- 1 - \varepsilon}} \right) \]
  \[ \| J_t \llbracket W_t^2 \rrbracket \prec I_t (w) \|^2_{L^2} \lesssim
     \frac{1}{\langle t \rangle^{1 + \varepsilon}} \left( \int^t_0 \| w_s \|^2
     \mathd s + \| I_t (w) \|^n_{W^{- 1 / 2, n + 1}} + \| \llbracket W_t^2
     \rrbracket \|^n_{\VV^{- 1 - \varepsilon}} \right) \]
\end{lemma}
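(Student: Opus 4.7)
The plan is to treat each of the three paraproducts separately by combining a Bony-type paraproduct/resonance bound on the product with the smoothing estimate $\| J_t f \|_{B^{s+1-\alpha}_{p,p}} \lesssim \langle t\rangle^{-\alpha-1/2} \| f \|_{B^s_{p,p}}$ stated in Section~2, and then converting the resulting bilinear bound into the additive form on the right hand side via Young's inequality and interpolation.

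For the first term the key observation is that $(1 - \theta_t) I_t(w)$ is spectrally supported at frequencies $\gtrsim t/2$ by construction of $\theta_t$, so $\| (1 - \theta_t) I_t(w) \|_{L^2} \lesssim t^{-1} \| I_t(w) \|_{H^1}$. The paraproduct bound $\| \llbracket W_t^2 \rrbracket \succ g \|_{B^{-1-\varepsilon}_{2,\infty}} \lesssim \| \llbracket W_t^2 \rrbracket \|_{\VV^{-1-\varepsilon}} \| g \|_{L^2}$ composed with $J_t : B^{-1-\varepsilon}_{2,\infty} \to L^2$ (which costs $\langle t\rangle^{\varepsilon - 1/2}$ via $\alpha = -\varepsilon$) gives
\[
 \| J_t(\llbracket W_t^2 \rrbracket \succ (1-\theta_t) I_t(w)) \|_{L^2}^2 \lesssim \langle t\rangle^{-3+2\varepsilon} \| \llbracket W_t^2 \rrbracket \|_{\VV^{-1-\varepsilon}}^2 \| I_t(w) \|_{H^1}^2,
\]
which has strictly more decay than required. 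The third term is handled by the low–high Bony estimate $\| f \prec g \|_{B^{s+\alpha}_{2,2}} \lesssim \| f \|_{B^\alpha_{\infty,\infty}} \| g \|_{B^s_{2,2}}$ for $\alpha = -1-\varepsilon < 0$ with $s = 1$, and then $J_t : B^{-\varepsilon}_{2,2} \to L^2$ contributing a factor $\langle t\rangle^{-3/2 + \varepsilon}$, yielding the same structural bound.

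The resonance (second) term is the most delicate step because the sum of regularities $-1-\varepsilon + 1 = -\varepsilon$ is \emph{negative}, so the pointwise product is not \emph{a priori} defined. The plan is to exploit the spectral localisation of $I_t(w)$ at frequencies $\lesssim t$ to embed it in $B^{1+2\varepsilon}_{2,2}$ at the price of a factor $t^{2\varepsilon}$, whereupon the standard resonance estimate $\| f \circ g \|_{L^2} \lesssim \| f \|_{\VV^{-1-\varepsilon}} \| g \|_{B^{1+2\varepsilon}_{2,2}}$ applies and the analysis proceeds as before, yielding the bound $\langle t\rangle^{-3 + 4\varepsilon} \| \llbracket W_t^2 \rrbracket \|^2_{\VV^{-1-\varepsilon}} \| I_t(w) \|^2_{H^1}$.

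In all three cases the bilinear estimate has the form $\langle t\rangle^{-1-\varepsilon-\gamma} \| \llbracket W_t^2 \rrbracket \|^2_{\VV^{-1-\varepsilon}} \| I_t(w) \|^2_{H^1}$ for some $\gamma > 0$. The final step is Young's inequality with exponents $(n/2, n/(n-2))$ to split the product as $\| \llbracket W_t^2 \rrbracket \|^n_{\VV^{-1-\varepsilon}} + \| I_t(w) \|^{2n/(n-2)}_{H^1}$; the small excess exponent $4/(n-2)$ on the Sobolev factor is absorbed, for $n$ large enough, by interpolating $\| I_t(w) \|_{H^1}$ between $W^{-1/2,n+1}$ and $H^1$ itself and then a second Young's inequality, using that $\| I_t(w) \|^2_{H^1} \lesssim \int_0^t \| w_s \|_{L^2}^2 \,\mathd s$. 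The \textbf{main analytic obstacle} is the resonance estimate and, more importantly, the careful bookkeeping in this last Young/interpolation step to ensure that the excess power on the $H^1$ factor can be redistributed into $\| I_t(w) \|^n_{W^{-1/2,n+1}}$ and a linear-in-time contribution, uniformly in $t$.
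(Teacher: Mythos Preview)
Your bilinear estimates for each of the three pieces are essentially correct and agree with the paper's approach (the paper handles the resonant term by lifting the regularity of $\llbracket W_t^2\rrbracket$ rather than of $I_t(w)$, but this is symmetric since both factors have spectral support in a ball of radius $\sim t$). The gap is in the final conversion to additive form.

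After Young's inequality with exponents $(n/2, n/(n-2))$ you are left with $\|I_t(w)\|_{H^1}^{2 + 4/(n-2)}$, and you propose to absorb the excess $4/(n-2)$ by ``interpolating $\|I_t(w)\|_{H^1}$ between $W^{-1/2,n+1}$ and $H^1$ itself''. This does not work: any interpolation between a space and itself is trivial, and more to the point $W^{-1/2,n+1}$ is strictly \emph{weaker} than $H^1$ (by Sobolev embedding on the torus $H^1 \hookrightarrow W^{-1/2,q}$ for all $q\geqslant 2$), so no combination of $\|I_t(w)\|_{W^{-1/2,n+1}}^n$ and $\|I_t(w)\|_{H^1}^2$ can control $\|I_t(w)\|_{H^1}^{2+\delta}$ uniformly. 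Take $I_t(w) = M e^{i k\cdot x}/|k|$ with $|k|$ large: the $H^1$ norm is $\sim M$ while the $W^{-1/2,n+1}$ norm is $\sim M|k|^{-3/2}$, so the right hand side can be made $\sim M^2$ while the left is $\sim M^{2+\delta}$.

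The paper avoids this by \emph{not} going all the way to $H^1$ in the bilinear step: it estimates each piece in terms of $\|I_t(w)\|_{H^{1-\varepsilon}}$ (this costs nothing extra thanks to the spectral support), then interpolates $\|I_t(w)\|_{H^{1-\varepsilon}} \lesssim \|I_t(w)\|_{H^1}^{1-\varepsilon}\|I_t(w)\|_{L^4}^{\varepsilon}$. With the specific choice $\varepsilon = 2/7$, a three-factor Young inequality yields exactly
\[
\|\llbracket W_t^2\rrbracket\|_{\VV^{-1-\varepsilon}}^{14} + \|I_t(w)\|_{H^1}^{2} + \|I_t(w)\|_{L^4}^{4},
\]
with the $H^1$ exponent landing precisely on $2$. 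The remaining $\|I_t(w)\|_{L^4}^4$ is then controlled by Lemma~\ref{L4estimate}, which gives $\|\phi\|_{L^4}^{4+\varepsilon} \lesssim \|\phi\|_{W^{-1/2,n+1}}^{n+1} + \delta\|\phi\|_{H^1}^2 + C_\delta$. This $L^4$ interpolation lemma is the missing ingredient in your plan: it is what allows the $W^{-1/2,n+1}$ norm to enter the picture, and it cannot be replaced by a direct $H^1$/$W^{-1/2,n+1}$ interpolation.
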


\begin{proof}
  We observe that since $\llbracket W_t^2 \rrbracket$ is spectrally supported
  in a ball or radius $\thicksim t$
  \[ \| \llbracket W_t^2 \rrbracket \|_{\VV^{- 1 + \varepsilon}} \lesssim
     \langle t \rangle^{2 \varepsilon} \| \llbracket W_t^2 \rrbracket
     \|_{\VV^{- 1 - \varepsilon}} . \]
  For the first estimate we know that $(1 - \theta_t) I_t (w)$ is supported in
  an annulus of radius $\sim t$, so $\| (1 - \theta_t) I_t (w) \|_{L^2}
  \lesssim \langle t \rangle^{- 1 + \varepsilon} \| I_t (w) \|_{H^{1 -
  \varepsilon}}$ and furthermore by interpolation $\| I_t (w) \|_{H^{1 -
  \varepsilon}} \lesssim \| I_t (w) \|^{1 - \varepsilon}_{H^1} \| I_t (w)
  \|^{\varepsilon}_{L^2} \lesssim \| I_t (w) \|^{1 - \varepsilon}_{H^1} \| I_t
  (w) \|^{\varepsilon}_{L^4}$. By definition $\langle t \rangle^{1 / 2} J_t$
  is a uniformly bounded Fourier multiplier regularizing by 1, and putting
  everything together, by paraproduct estimates
  \begin{eqnarray*}
    \| J_t (\llbracket W_t^2 \rrbracket \succ (1 - \theta_t) I_t (w))
    \|^2_{L^2} & \lesssim & \langle t \rangle^{- 1} \langle t \rangle^{2
    \varepsilon} \langle t \rangle^{- 2 + 2 \varepsilon} \| \llbracket W_t^2
    \rrbracket \|^2_{\VV^{- 1 - \varepsilon}} \| I_t (w) \|^2_{H^{1 -
    \varepsilon}}\\
    & \lesssim & \langle t \rangle^{- 1} \langle t \rangle^{2 \varepsilon}
    \langle t \rangle^{- 2 + 2 \varepsilon} \| \llbracket W_t^2 \rrbracket
    \|^2_{\VV^{- 1 - \varepsilon}} \| I_t (w) \|^{2 - 2 \varepsilon}_{H^1} \|
    I_t (w) \|^{2 \varepsilon}_{L^4}\\
    \small{(\varepsilon = 2 / 7)} \qquad & \lesssim & \langle t \rangle^{- 3 /
    2} \left( \| \llbracket W_t^2 \rrbracket \|^{14}_{\VV^{- 1 - \varepsilon}}
    + \| I_t (w) \|^2_{H^1} + \| I_t (w) \|^4_{L^4} \right)\\
    & \lesssim & \langle t \rangle^{- 3 / 2} \left( \int^t_0 \| w \|^2 \mathd
    s + \| I_t (w) \|^n_{W^{- 1 / 2, n + 1}} + \| \llbracket W_t^2 \rrbracket
    \|^{14}_{\VV^{- 1 - \varepsilon}} \right)
  \end{eqnarray*}
  For the second term in addition observe that the function $\langle t
  \rangle^{1 / 2} J_t$ is spectrally supported in an annulus of radius~$\sim
  t$, and regularizes by $1$ so again by estimates for the resonant product
  \[ \begin{array}{lll}
       \| J_t (\llbracket W_t^2 \rrbracket \circ I_t (w)) \|^2_{L^2} &
       \lesssim & \langle t \rangle^{- 3} \| \llbracket W_t^2 \rrbracket
       \|^2_{\VV^{- 1 + 2 \varepsilon}} \| I_t (w) \|^2_{H^{1 -
       \varepsilon}}\\
       & \lesssim & \langle t \rangle^{- 3} \langle t \rangle^{6 \varepsilon}
       \| \llbracket W_t^2 \rrbracket \|^2_{\VV^{- 1 - \varepsilon}} \| I_t
       (w) \|^2_{H^{1 - \varepsilon}}
     \end{array} \]
  For the third estimate again applying paraproduct estimates and the
  properties of $J$,
  \[ \| J_t (\llbracket W_s^2 \rrbracket \prec I_t (w)) \|^2_{L^2} \lesssim
     \langle t \rangle^{- 3 + 4 \varepsilon} \| \llbracket W_s^2 \rrbracket
     \|^2_{\VV^{- 1 - \varepsilon}} \| I_t (w) \|^2_{H^{1 - \varepsilon}} . \]
  Now, the claim follows from interpolation and Young's inequality
  \begin{eqnarray*}
    &  & \| \llbracket W_t^2 \rrbracket \|^2_{\VV^{- 1 - \varepsilon}} \| I_t
    (w) \|^2_{H^{1 - \varepsilon}}\\
    & \lesssim & \| \llbracket W_t^2 \rrbracket \|^2_{\VV^{- 1 -
    \varepsilon}} \| I_t (w) \|^{2 - 2 \varepsilon}_{H^1} \| I_t (w) \|^{2
    \varepsilon}_{L^4}\\
    \small{(\varepsilon = 2 / 7)} \qquad & \lesssim & \| \llbracket W_t^2
    \rrbracket \|^{14}_{\VV^{- 1 - \varepsilon}} + \| I_t (w) \|^2_{H^1} + \|
    I_t (w) \|^4_{L^4}\\
    & \lesssim & \left( \int^t_0 \| w_s \|_{L^2}^2 \mathd s + \| I_t (w)
    \|^n_{W^{- 1 / 2, n + 1}} + \| \llbracket W_t^2 \rrbracket \|^{14}_{\VV^{-
    1 - \varepsilon}} \right) .
  \end{eqnarray*}
  
\end{proof}

\begin{lemma}
  \label{analyticestimate2}Let $f \in C \left( [0, \infty], \VV^{- 1 / 2 -
  \varepsilon} \right)$ and $g \in C ([0, \infty], H^1)$ such that $f_t, g_t$
  have spectral support in a ball of radius proportional to $t$. There exists
  $n \in \mathbbm{N}$ such that the following estimates hold:
  \[ \| J_t (f_t g_t^2) \|^2_{L^2} \lesssim \langle t \rangle^{- 3 / 2} \| f_t
     \|_{\VV^{- 1 / 2 - \delta}}^2 \| g_t \|^4_{L^4}, \]
  \[ \| J_t (f_t g_t^2) \|^2_{L^2} \lesssim \langle t \rangle^{- 3 / 2} \left(
     \| f_t \|^n_{\VV^{- 1 / 2 - \delta}} + \| g_t \|^2_{H^1} + \| g_t
     \|_{W^{- 1 / 2, n}}^n \right), \]
  and
  \[ \| J_t (g_t^3) \|^2_{L^2} \lesssim \langle t \rangle^{- 3 / 2} (\| g_t
     \|_{H^1}^2 + \| g_t \|_{W^{- 1 / 2, n}}^n) . \]
\end{lemma}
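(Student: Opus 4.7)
The three inequalities share a common engine: the smoothing bound $\|J_t h\|_{L^2}\lesssim \langle t\rangle^{-3/2}\|h\|_{L^2}$ (specialize the estimate stated just after equation~(\ref{eq:def-Y}) to $\alpha=1$, $s=0$, $p=2$), combined with Bernstein's inequality applied to the functions $f_t,g_t$, which by assumption have spectral support in a ball of radius $\sim t$. The $\delta>0$ in the estimates will be chosen small enough (and $n$ large enough) to make all intermediate exponents fit.

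For the first estimate I would combine H\"older with Bernstein: $\|f_t g_t^2\|_{L^2}\le \|f_t\|_{L^\infty}\|g_t\|_{L^4}^2$ and $\|f_t\|_{L^\infty}\lesssim \langle t\rangle^{1/2+\delta}\|f_t\|_{\VV^{-1/2-\delta}}$ for the spectrally localized $f_t$. Combined with the $J_t$ bound this produces $\|J_t(f_t g_t^2)\|_{L^2}^2\lesssim \langle t\rangle^{-2+2\delta}\|f_t\|_{\VV^{-1/2-\delta}}^2\|g_t\|_{L^4}^4$, which is stronger than the claimed $\langle t\rangle^{-3/2}$ rate as soon as $\delta<1/4$. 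The second estimate then follows from the first by applying Young's inequality to $\|f_t\|^2\|g_t\|^4$ with dual exponents $p=n/2$, $q=n/(n-2)$, which yields $\|f_t\|^n_{\VV^{-1/2-\delta}}+\|g_t\|_{L^4}^{4n/(n-2)}$; for $n$ large enough $4n/(n-2)\le 4+\varepsilon$, with $\varepsilon$ the exponent furnished by Lemma~\ref{L4estimate}, and that lemma bounds $\|g_t\|_{L^4}^{4+\varepsilon}$ by $\|g_t\|_{W^{-1/2,n+1}}^{n+1}+\|g_t\|_{H^1}^2+C_\delta$, giving the claim after renaming indices.

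The third estimate is the most delicate. Bernstein on the spectrally localized $g_t^3$ gives $\|g_t^3\|_{L^2}=\|g_t\|_{L^6}^3\lesssim \langle t\rangle^{3/4}\|g_t\|_{L^4}^3$, so the $J_t$ bound produces $\|J_t(g_t^3)\|_{L^2}^2\lesssim \langle t\rangle^{-3/2}\|g_t\|_{L^4}^6$. The remaining task is to bound $\|g_t\|_{L^4}^6$ by $\|g_t\|_{H^1}^2+\|g_t\|_{W^{-1/2,n}}^n$ (modulo additive constants absorbed into the implicit $\lesssim$). For this I would follow the pattern of Lemma~\ref{L4estimate}: the intermediate estimate $\|\phi\|_{L^4}^{3/2}\lesssim \|\langle\mathD\rangle^{-1/2}\phi\|_{L^8}\|\phi\|_{H^1}^{1/2}$ obtained there via fractional Leibniz (Proposition~\ref{FractionalLeibniz}) and interpolation yields $\|\phi\|_{L^4}^6\lesssim \|\langle\mathD\rangle^{-1/2}\phi\|_{L^8}^4\|\phi\|_{H^1}^2$; using $L^n\hookrightarrow L^8$ on the compact $\Lambda$ for $n\ge 8$ one bounds $\|\langle\mathD\rangle^{-1/2}\phi\|_{L^8}\lesssim \|\phi\|_{W^{-1/2,n}}$, and a final Young step, combined with the spectral localization of $g_t$ (which allows to re-express the $\|g_t\|_{H^1}$ prefactor via Bernstein) absorbs the $\|g_t\|_{W^{-1/2,n}}^4$ into a single $n$-th power for $n$ chosen sufficiently large.

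The main obstacle is precisely this last balancing of Young exponents in the third estimate: the natural Young split of $\|g_t\|_{W^{-1/2,n}}^4\|g_t\|_{H^1}^2$ does not preserve the exact power $2$ on $\|g_t\|_{H^1}$ without exploiting spectral localization, which is why the lemma requires $n$ to be large and why the assumption that $f_t,g_t$ have spectral support in a ball of radius $\sim t$ is indispensable. Once $n$ is fixed large enough to make all exponents fit, all three inequalities follow, completing the analytic input needed for Lemma~\ref{lemma:estimateremainder}.
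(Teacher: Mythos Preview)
Your treatment of the first two estimates is essentially the paper's: smooth with $J_t$, use Bernstein on the spectrally localized $f_t$ to pass from $L^\infty$ to $\VV^{-1/2-\delta}$, then Young and Lemma~\ref{L4estimate}. That part is fine.

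The third estimate, however, is where you take an unnecessary detour and leave a real gap. You bound $\|J_t(g_t^3)\|_{L^2}^2$ by $\langle t\rangle^{-3/2}\|g_t\|_{L^4}^6$ via Bernstein on $g_t$, and then try to control $\|g_t\|_{L^4}^6$ by $\|g_t\|_{W^{-1/2,n}}^4\|g_t\|_{H^1}^2$. At that point you cannot split by Young while keeping the exponent on $\|g_t\|_{H^1}$ equal to $2$, and your suggested fix --- ``re-express the $\|g_t\|_{H^1}$ prefactor via Bernstein'' --- would trade $\|g_t\|_{H^1}$ for a positive power of $\langle t\rangle$ times $\|g_t\|_{W^{-1/2,n}}$, destroying the $\langle t\rangle^{-3/2}$ rate. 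You flag this obstacle yourself but do not actually resolve it.

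The paper sidesteps the issue entirely: the third estimate is simply the second estimate with $f=g$. Taking $f=g$ gives
\[
\|J_t(g_t^3)\|_{L^2}^2 \lesssim \langle t\rangle^{-3/2}\bigl(\|g_t\|_{\VV^{-1/2-\delta}}^n + \|g_t\|_{H^1}^2 + \|g_t\|_{W^{-1/2,n}}^n\bigr),
\]
and then one only needs the Besov embedding $W^{-1/2,n}\hookrightarrow \VV^{-1/2-\delta}$ on $\Lambda$, valid once $n\ge 3/\delta$, to absorb the first term into the last. No separate $L^6$ or Bernstein step is needed, and the exponent $2$ on $\|g_t\|_{H^1}$ is inherited directly from Lemma~\ref{L4estimate} rather than recovered by a Young argument.
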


\begin{proof}
  By the spectral properties of $J_t$,
  \[ \| J_t (f_t g_t^2) \|^2_{L^2} \lesssim \langle t \rangle^{- 3} \| f_t
     \|^2_{L^{\infty}} \| g_t \|^4_{L^4} \lesssim \langle t \rangle^{- 3 / 2}
     \| f_t \|^2_{\VV^{- 1 / 2 - \delta}} \| g_t \|_{L^4}^4 . \]
  Applying Young's inequality with exponents $\left( \frac{n}{2}, \frac{n /
  2}{(n / 2 - 1)} \right)$ with $n$ such that $\frac{2 n}{(n / 2 - 1)}
  \leqslant 4 + \varepsilon$ where $\varepsilon$ is chosen as in Lemma
  \ref{L4estimate} we have
  \begin{eqnarray*}
    \langle t \rangle^{- 3 / 2} \| f_t \|^2_{\VV^{- 1 / 2 - \delta}} \| g_t
    \|_{L^4}^4 & \leqslant & \langle t \rangle^{- 3 / 2} \left( \| f_t
    \|^n_{\VV^{- 1 / 2 - \delta}} + \| g_t \|_{L^4}^{4 + \varepsilon}
    \right)\\
    & \leqslant & \langle t \rangle^{- 3 / 2} \left( \| f_t \|^n_{\VV^{- 1 /
    2 - \delta}} + \| g_t \|_{W^{- 1 / 2, n}}^n + \| g_t \|^2_{H^1} \right)
  \end{eqnarray*}
  Now the second estimate follows from $\tmop{chosing}$ $n$ large enough
  (depending on $\delta$) and using Besov embedding after taking $f = g$.
\end{proof}

\begin{lemma}
  The following estimates hold\label{analyticestimate3}
  \[ \langle t \rangle^{1 + \varepsilon} \| J_s (W_s I_t (w) \succ_{}
     I^{\flat}_t (u)) \|^2_{L^2} \lesssim \| I_t (w) \|^{4 +
     \varepsilon}_{L^4} + \| I^{\flat}_t (u) \|^4_{L^4} + \| W_t \|^n_{\VV^{-
     1 / 2 - \varepsilon}}, \]
  \[ \langle t \rangle^{1 + \varepsilon} \| J_s ((I_s (w))^2 \succ I^{\flat}_s
     (u)) \|^2_{L^2} \lesssim \| I_t (w) \|^{4 + \varepsilon}_{L^4} + \|
     I^{\flat}_t (u) \|^n_{\VV^{- 1 / 2 - \varepsilon}} . \]
\end{lemma}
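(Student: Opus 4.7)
The plan is to follow the same pattern as in Lemma~\ref{analyticestimate1}: combine the smoothing of $J_t$, namely $\|J_t f\|_{L^2} \lesssim \langle t \rangle^{-\alpha-1/2}\|f\|_{H^{-1+\alpha}}$ for any $\alpha \in \mathbbm{R}$, with paraproduct estimates and Young's inequality. Choosing $\alpha = 1/2 - \varepsilon$, the prefactor $\langle t \rangle^{1+\varepsilon}$ on the left-hand side is absorbed after applying the $J_t$ bound, leaving a small negative power of $t$ to spare.

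For the second estimate, apply the standard Besov paraproduct bound
\[ \|(I_s(w))^2 \succ I_s^{\flat}(u)\|_{H^{-1/2-\varepsilon}} \lesssim \|(I_s(w))^2\|_{L^2}\,\|I_s^{\flat}(u)\|_{\VV^{-1/2-\varepsilon}} \lesssim \|I_s(w)\|_{L^4}^{2}\,\|I_s^{\flat}(u)\|_{\VV^{-1/2-\varepsilon}} \]
together with the $J_s$ bound above to obtain
\[ \langle s\rangle^{1+\varepsilon}\|J_s((I_s(w))^2 \succ I_s^{\flat}(u))\|_{L^2}^2 \lesssim \|I_s(w)\|_{L^4}^{4}\,\|I_s^{\flat}(u)\|_{\VV^{-1/2-\varepsilon}}^2. \]
Young's inequality with exponents $(1+\varepsilon/4,\,1+4/\varepsilon)$ then produces the desired bound by $\|I_s(w)\|_{L^4}^{4+\varepsilon} + \|I_s^{\flat}(u)\|_{\VV^{-1/2-\varepsilon}}^n$ for any integer $n \geq 2+8/\varepsilon$.

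The first estimate follows an analogous route, but the additional rough factor $W_t$ forces one further paraproduct decomposition. Use the $L^p$-paraproduct estimate with H\"older exponents $\tfrac{1}{2} = \tfrac{1}{4} + \tfrac{1}{4}$ to isolate the $\|I_t^{\flat}(u)\|_{L^4}$ factor, reducing the problem to bounding $\|W_t\,I_t(w)\|_{B^{-1/2-\varepsilon}_{4,4}}$. A Bony decomposition
\[ W_t\,I_t(w) = W_t \prec I_t(w) + W_t \succ I_t(w) + W_t \circ I_t(w) \]
then lets each piece be estimated by $\|W_t\|_{\VV^{-1/2-\varepsilon}}\,\|I_t(w)\|_{L^4}$: the off-diagonal parts via paraproduct-in-Besov and the embedding $L^4 \hookrightarrow B^{0}_{4,4}$, and the resonant part by exploiting the spectral localization of $I_t(w)$ in a ball of radius $\lesssim t$ to get, via Bernstein, $\|I_t(w)\|_{H^{1/2+\varepsilon+\delta}} \lesssim t^{1/2+\varepsilon+\delta}\|I_t(w)\|_{L^4}$; the resulting small power of $t$ is absorbed by choosing $\alpha$ slightly larger in the $J_t$ bound. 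The resulting trilinear estimate $\|W_t\|_{\VV^{-1/2-\varepsilon}}^{2}\,\|I_t(w)\|_{L^4}^{2}\,\|I_t^{\flat}(u)\|_{L^4}^{2}$ is then converted by Young's inequality with three unequal exponents (summing to $1$) into the three summands with powers $n$, $4+\varepsilon$, $4$ on the right-hand side, provided $n$ is taken large enough (depending on $\varepsilon$, concretely $n \geq 4 + 16/\varepsilon$).

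The main obstacle lies in handling the resonant piece $W_t \circ I_t(w)$: since $I_t(w)$ has no a priori positive regularity, its spectral localization in a ball of radius $\lesssim t$ must be used, at the cost of a power of $t$ that has to be precisely compensated by the smoothing gain from $J_t$. Once this bookkeeping is in place, both estimates reduce to routine applications of Besov paraproduct estimates and Young's inequality.
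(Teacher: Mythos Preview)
Your treatment of the second inequality is essentially the paper's: the paraproduct bound $\|(I_s(w))^2 \succ I_s^{\flat}(u)\|_{H^{-1/2-\varepsilon}} \lesssim \|I_s(w)\|_{L^4}^2\|I_s^{\flat}(u)\|_{\VV^{-1/2-\varepsilon}}$, the smoothing of $J_t$, and Young's inequality are exactly what the paper does.

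For the first inequality your route is correct but substantially more complicated than the paper's. You Bony-decompose $W_t I_t(w)$ and then identify the resonant piece $W_t \circ I_t(w)$ as ``the main obstacle'', handling it by Bernstein on $I_t(w)$. The paper bypasses all of this: since $W_t$ itself is spectrally supported in a ball of radius $\sim t$, one simply uses Bernstein on $W_t$ to get $\|W_t\|_{L^\infty}\lesssim \langle t\rangle^{1/2+2\varepsilon}\|W_t\|_{\VV^{-1/2-\varepsilon}}$, and then H\"older gives $\|W_t I_t(w)\|_{L^4}\le \|W_t\|_{L^\infty}\|I_t(w)\|_{L^4}$. Combining with the paraproduct bound $\|f\succ g\|_{L^2}\lesssim\|f\|_{L^4}\|g\|_{L^4}$ and $\|J_t\,\cdot\,\|_{L^2}^2\lesssim\langle t\rangle^{-3}\|\cdot\|_{L^2}^2$ yields
\[
\|J_t(W_tI_t(w)\succ I_t^{\flat}(u))\|_{L^2}^2\lesssim \langle t\rangle^{-3}\langle t\rangle^{1+4\varepsilon}\|W_t\|_{\VV^{-1/2-\varepsilon}}^2\|I_t(w)\|_{L^4}^2\|I_t^{\flat}(u)\|_{L^4}^2,
\]
and after multiplying by $\langle t\rangle^{1+\varepsilon}$ the net power of $t$ is negative; Young's inequality finishes. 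No Bony decomposition, no resonant obstacle.

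A small caveat on your argument as written: in the resonant step you invoke $\|I_t(w)\|_{H^{1/2+\varepsilon+\delta}}$, an $L^2$-based norm, but you previously set up the estimate in $B^{-1/2-\varepsilon}_{4,4}$ in order to pair with $\|I_t^{\flat}(u)\|_{L^4}$. The resonant product $W_t\circ I_t(w)$ estimated via $\VV^{-1/2-\varepsilon}\times H^{1/2+\varepsilon+\delta}$ lands in $L^2$, not in an $L^4$-based space, so the integrability indices do not match as stated. This is repairable (run the resonant estimate directly in $B_{4,4}$ with Bernstein giving $\|I_t(w)\|_{B^{1/2+2\varepsilon}_{4,4}}\lesssim t^{1/2+2\varepsilon}\|I_t(w)\|_{L^4}$), but it underscores that the paper's one-line route via $\|W_t\|_{L^\infty}$ is both simpler and cleaner.
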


\begin{proof}
  For the first estimate we again use the spectral properties of $W, I,$ and
  $J$ and obtain by paraproduct estimate
  \begin{eqnarray*}
    \| J_s (W_t I_t (w) \succ_{} I^{\flat}_t (u)) \|^2_{L^2} & \lesssim &
    \langle t \rangle^{- 3} \| W_t \|^2_{L^{\infty}} \| I_t (w) \|_{L^4}^2 \|
    I^{\flat}_t (u) \|^2_{L^4}\\
    & \lesssim & \langle t \rangle^{- 3} \langle t \rangle^{1 + 4
    \varepsilon} \| W_t \|^2_{\VV^{- 1 / 2 - \varepsilon}} \| I_t (w)
    \|_{L^4}^2 \| I^{\flat}_t (u) \|^2_{L^4}
  \end{eqnarray*}
  and the claim follows by Young's inequality. For the second
  \[ \| J_s ((I_s (w))^2 \succ I^{\flat}_s (u)) \|^2_{L^2} \lesssim \langle t
     \rangle^{2 - 2 \varepsilon} \| (I_s (w)) \|_{L^4}^4 \| I^{\flat}_t (u)
     \|^2_{\VV^{- 1 / 2 - \varepsilon}}, \]
  and the claim follows again by Young's inequality. 
\end{proof}

\begin{lemma}
  \label{analyticestimate4}Let $f_t \in C \left( [0, \infty], \VV^{- 1 / 2 -
  \delta} \right)$ and $g_t \in C ([0, \infty], H^1)$ such that $f_t, g_t$
  have spectral support in a ball of radius proportional to $t$. Then the
  following estimates hold
  \[ \| (J_t (f_t g_t)) \|_{L^2}^2 \lesssim \langle t \rangle^{- 1 + 2
     \delta} \| f_t \|_{\VV^{- 1 - \delta}}^2 \| g_t \|^2_{L^2} \]
  \[ \| (J_t (f_t g_t)) \|_{L^2}^2 \lesssim \langle t \rangle^{- 1 + 2 \delta}
     \left( \| f_t \|^8_{\VV^{- 1 - \delta}} + \| g_t \|^4_{H^{- 1}} + \| g_t
     \|^2_{H^1} \right) \]
\end{lemma}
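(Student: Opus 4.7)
The plan is to combine a Bernstein-type gain from the spectral localization of $f_t$ with the sharp $L^2 \to L^2$ decay of $J_t$, and then derive the second bound from the first by interpolation plus Young's inequality.

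First I would establish the $L^2$-product bound
\[
\|f_t g_t\|_{L^2} \leqslant \|f_t\|_{L^\infty}\|g_t\|_{L^2} \lesssim \langle t\rangle^{1+\delta}\|f_t\|_{\VV^{-1-\delta}}\|g_t\|_{L^2},
\]
where the second inequality uses Bernstein's inequality together with the hypothesis that $f_t$ has spectral support in a ball of radius $\sim t$: one pays a factor $\langle t\rangle^{1+\delta}$ to pass from the H\"older--Besov norm $\VV^{-1-\delta}$ to $L^\infty$. Next I would use the mapping property $\|J_t h\|_{L^2} \lesssim \langle t\rangle^{-3/2}\|h\|_{L^2}$, which follows from the general estimate for $J_t$ recalled at the beginning of Section~2 (with $s=0$ and $\alpha=1$). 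Combining these two facts gives
\[
\|J_t(f_t g_t)\|_{L^2}^2 \lesssim \langle t\rangle^{-3}\langle t\rangle^{2+2\delta}\|f_t\|_{\VV^{-1-\delta}}^2\|g_t\|_{L^2}^2 = \langle t\rangle^{-1+2\delta}\|f_t\|_{\VV^{-1-\delta}}^2\|g_t\|_{L^2}^2,
\]
which is exactly the first inequality.

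To deduce the second inequality I would use the standard interpolation bound $\|g_t\|_{L^2}^2 \leqslant \|g_t\|_{H^{-1}}\|g_t\|_{H^1}$, yielding
\[
\|f_t\|_{\VV^{-1-\delta}}^2\|g_t\|_{L^2}^2 \leqslant \|f_t\|_{\VV^{-1-\delta}}^2 \|g_t\|_{H^{-1}}\|g_t\|_{H^1}.
\]
Then Young's inequality with conjugate exponents $(4,4,2)$ (so that the three reciprocals sum to $1$) gives the pointwise bound
\[
\|f_t\|_{\VV^{-1-\delta}}^2 \|g_t\|_{H^{-1}}\|g_t\|_{H^1} \lesssim \|f_t\|_{\VV^{-1-\delta}}^8 + \|g_t\|_{H^{-1}}^4 + \|g_t\|_{H^1}^2,
\]
and multiplying by $\langle t\rangle^{-1+2\delta}$ yields the second claimed estimate.

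There is no real obstacle: once the spectral support assumption is used to convert $\VV^{-1-\delta}$ into $L^\infty$ via Bernstein and the decay of $J_t$ in $L^2$ is invoked, both estimates are essentially immediate. The only thing to verify carefully is the exponent accounting — in particular that the Bernstein loss $\langle t\rangle^{2+2\delta}$ is more than offset by the $J_t$ gain $\langle t\rangle^{-3}$, producing the desired $\langle t\rangle^{-1+2\delta}$ prefactor, and that the Young exponents $(4,4,2)$ are chosen so as to reproduce the specific powers $8$, $4$, $2$ in the statement.
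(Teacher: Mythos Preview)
Your proposal is correct and follows essentially the same approach as the paper: first bound $\|J_t(f_t g_t)\|_{L^2}^2 \lesssim \langle t\rangle^{-3}\|f_t\|_{L^\infty}^2\|g_t\|_{L^2}^2$ via the $L^2$ decay of $J_t$, then use Bernstein on the spectrally supported $f_t$ to trade $\|f_t\|_{L^\infty}$ for $\langle t\rangle^{1+\delta}\|f_t\|_{\VV^{-1-\delta}}$, and finally obtain the second estimate by the interpolation $\|g_t\|_{L^2}^2 \leqslant \|g_t\|_{H^{-1}}\|g_t\|_{H^1}$ followed by Young's inequality with exponents $(4,4,2)$. The exponent bookkeeping you describe matches the paper's computation exactly.
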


\begin{proof}
  
  \[ \| (J_t (f_t g_t)) \|_{L^2}^2 \lesssim \langle t \rangle^{- 3} \| f_t
     \|_{L^{\infty}}^2 \| g_t \|^2_{L^2} \lesssim \langle t \rangle^{- 1 + 2
     \delta} \| f_t \|_{\VV^{- 1 - \delta}}^2 \| g_t \|^2_{L^2} . \]
  This proves the first estimate. For the second we continue
  \begin{eqnarray*}
    \langle t \rangle^{- 1 + 2 \delta} \| f_t \|_{\VV^{- 1 - \delta}}^2 \| g_t
    \|^2_{L^2} & \lesssim & \langle t \rangle^{- 1 + 2 \delta} \| f_t
    \|_{\VV^{- 1 - \delta}}^2 \| g_t \|_{H^1} \| g_t \|_{H^{- 1}}\\
    & \lesssim & \langle t \rangle^{- 1 + 2 \delta} \left( \| f_t \|_{\VV^{-
    1 - \delta}}^8 + \| g_t \|^4_{H^{- 1}} + \| g_t \|^2_{H^{- 1}} \right) .
  \end{eqnarray*}
  
\end{proof}

\begin{lemma}
  \label{analyticestimate5}It holds
  \[ \int^T_0 \int_{\Lambda} (J_t (\mathbbm{W}_t^2 \succ I^{\flat}_t (w)))^2
     \lesssim T^{3 \delta} \left( \sup_t \| \mathbbm{W}_t^2 \|^2_{\VV^{- 1 -
     \delta}} \right) (\sup_t \| I_t (w) \|^2_{L^2}), \]
  and
  \[ \int^T_0 \int_{\Lambda} (J_t (\mathbbm{W}_t^2 \succ I^{\flat}_t (w)))^2
     \lesssim T^{3 \delta} \left( \sup_t \| I_t (w) \|^4_{H^{- 1}} + \int^T_0
     \| w_t \|^2_{L^2} \mathd t_{} + \sup_t \| \mathbbm{W}_t^2 \|^8_{\VV^{- 1
     - \delta}} \right) . \]
  
\end{lemma}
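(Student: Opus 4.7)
The plan is to first establish a pointwise-in-$t$ bound of the form
\[
\| J_t(\mathbbm{W}_t^2 \succ I^{\flat}_t(w)) \|_{L^2}^2 \lesssim \langle t \rangle^{-1+2\delta} \| \mathbbm{W}_t^2 \|_{\VV^{-1-\delta}}^2 \| I_t(w) \|_{L^2}^2,
\]
and then integrate in $t$ to deduce both claims. The pointwise bound will be obtained by exploiting spectral localization: since $J_t = \sigma_t(\mathD)\langle \mathD \rangle^{-1}$ has symbol supported at frequencies $|k| \sim t$ with $\|\sigma_t\|_{L^\infty} \lesssim t^{-1/2}$, one has $\| J_t h\|_{L^2}^2 \lesssim \langle t \rangle^{-3} \| \Pi_{|k|\sim t} h \|_{L^2}^2$, where $\Pi_{|k|\sim t}$ denotes the spectral projector onto a dyadic annulus around $t$. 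Expanding the paraproduct as $\mathbbm{W}_t^2 \succ I^{\flat}_t(w) = \sum_j S_{j-1}(I^{\flat}_t(w))\, \Delta_j \mathbbm{W}_t^2$, only the blocks with $2^j \sim t$ survive the projection, and there are $O(1)$ of them. For these we estimate
\[
\| S_{j-1}(I^{\flat}_t(w))\, \Delta_j \mathbbm{W}_t^2 \|_{L^2} \leq \| I_t(w) \|_{L^2}\, \| \Delta_j \mathbbm{W}_t^2 \|_{L^\infty} \lesssim t^{1+\delta} \| I_t(w) \|_{L^2} \| \mathbbm{W}_t^2 \|_{\VV^{-1-\delta}},
\]
using $\| I^{\flat}_t(w)\|_{L^2} \leq \|I_t(w)\|_{L^2}$ and the definition of the Besov norm $\VV^{-1-\delta}$. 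Combining with the $\langle t\rangle^{-3}$ prefactor from $J_t$ yields the pointwise bound, and integrating $\int_0^T \langle t\rangle^{-1+2\delta}\,\mathd t \lesssim T^{2\delta} \leq T^{3\delta}$ gives the first estimate.

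For the second estimate we interpolate $\| I_t(w) \|_{L^2}^2 \leq \| I_t(w) \|_{H^{-1}} \| I_t(w) \|_{H^1}$ and use the bound $\sup_{t\leq T} \| I_t(w) \|_{H^1}^2 \leq \int_0^T \|w_s\|_{L^2}^2\, \mathd s$, which follows from $I_t(w) = \int_0^t J_s w_s\, \mathd s$ together with the identity $\int_0^t \sigma_s^2(k)\, \mathd s = \rho_t^2(k) \leq 1$ and Cauchy--Schwarz in $s$. Plugging this into the first estimate produces an upper bound of the form
\[
\int_0^T \| J_t(\mathbbm{W}_t^2 \succ I^{\flat}_t(w)) \|_{L^2}^2\, \mathd t \lesssim T^{2\delta} \cdot A \cdot B^{1/2} \cdot C^{1/2},
\]
with $A = \sup_t \|\mathbbm{W}_t^2\|_{\VV^{-1-\delta}}^2$, $B = \int_0^T \|w_s\|_{L^2}^2\, \mathd s$, and $C = \sup_t \|I_t(w)\|_{H^{-1}}^2$. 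Young's inequality with exponents $(4, 4, 2)$ applied to $A \cdot B^{1/2} \cdot C^{1/2}$ produces the three required terms $\sup_t \|\mathbbm{W}_t^2\|_{\VV^{-1-\delta}}^8$, $\int_0^T \|w_s\|^2 \mathd s$, and $\sup_t \|I_t(w)\|_{H^{-1}}^4$, completing the second claim.

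The main technical point is the pointwise bound itself: the delicate step is aligning the spectral localization of $J_t$ with the paraproduct structure so that the Bernstein loss $t^{1+\delta}$ produced by the negative regularity of $\mathbbm{W}_t^2$ is exactly compensated by the $t^{-3/2}$ decay from $J_t$, leaving the integrable weight $\langle t \rangle^{-1+2\delta}$. Any looser estimate (for instance, bounding $\|I^{\flat}_t(w)\|_{L^\infty}$ by Bernstein from its $L^2$ norm) would produce a positive power of $t$ and destroy the $T^{3\delta}$ scaling.
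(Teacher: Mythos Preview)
Your proof is correct and follows the same strategy as the paper, which simply refers back to Lemma~\ref{analyticestimate4}: a pointwise-in-$t$ bound of the form $\langle t\rangle^{-1+2\delta}\|\mathbbm{W}_t^2\|_{\VV^{-1-\delta}}^2\|I_t(w)\|_{L^2}^2$ via spectral localization and Bernstein, then integration in $t$, interpolation $\|\cdot\|_{L^2}^2\leqslant\|\cdot\|_{H^1}\|\cdot\|_{H^{-1}}$, and Young's inequality. One small slip in labeling: to obtain $A^4+B+C^2$ from $A\cdot B^{1/2}\cdot C^{1/2}$ the Young exponents are $(4,2,4)$ rather than $(4,4,2)$; your stated conclusion is nonetheless the correct one.
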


\begin{proof}
  This follows in the same fashion as Lemma~\ref{analyticestimate4} .
\end{proof}

\appendix\section{Besov spaces and paraproducts}\label{sec:appendix-para}

In this section we will recall some well known results about Besov spaces,
embeddings, Fourier multipliers and paraproducts. The reader can find full
details and proofs
in~{\cite{bahouri_fourier_2011,gubinelli_paracontrolled_2015}}.

\

First recall the definition of Littlewood--Paley blocks. Let $\chi, \varphi$
be smooth radial functions $\mathbbm{R}^d \rightarrow \mathbbm{R}$ such that
\begin{itemize}
  \item $\tmop{supp} \chi \subseteq B (0, R)$, $\tmop{supp} \varphi \subseteq
  B (0, 2 R) \setminus B (0, R)$;
  
  \item $0 \leqslant \chi, \varphi \leqslant 1$, $\chi (\xi) + \sum_{j \geq 0}
  \varphi (2^{- j} \xi) = 1$ for any $\xi \in \mathbbm{R}^d$;
  
  \item $\tmop{supp} \varphi (2^{- j} \cdot) \cap \tmop{supp} \varphi (2^{- i}
  \cdot) = \varnothing$ if $| i - j | > 1$.
\end{itemize}
Introduce the notations $\varphi_{- 1} = \chi$, $\varphi_j = \varphi (2^{- j}
\cdot)$ for $j \geqslant 0$. For any $f \in \CS' (\Lambda)$ we define the
operators $\Delta_j f \assign \mathcal{\CF}^{- 1}_{\xi} (\varphi_j (\xi)
\hat{f} (\xi))$, $j \geqslant - 1$.

\begin{definition}
  Let $s \in \mathbbm{R}, p, q \in [1, \infty]$. For a Schwarz distribution $f
  \in \CS' (\Lambda)$ define the norm
  \[ \| f \|_{B_{p, q}^s} \assign \| (2^{j s} \| \Delta_j f \|_{L^p})_{j
     \geqslant - 1} \|_{\ell^q} . \]
  Then the space $B^s_{p, q}$ is the closure of Schwarz distributions under
  this norm. We denote $\VV^{\alpha} = B_{\infty, \infty}^{\alpha}$ the
  Besov--H{\"o}lder space and $H^{\alpha} = B^{\alpha}_{2, 2}$ the Sobolev
  spaces. \ 
\end{definition}

\begin{proposition}
  \label{besovembedding}Let $1 \leqslant p_1 \leqslant p_2 \leqslant \infty$
  and $1 \leqslant q_1 \leqslant q_2 \leqslant \infty$. Then $B^s_{p_1, q_1}$
  is continuously embedded in $B_{p_2, q_2}^{s - d \left( \frac{1}{p_1} -
  \frac{1}{p_2} \right)}$.
\end{proposition}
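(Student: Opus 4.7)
The plan is to deduce the statement from two classical ingredients: Bernstein's inequality for functions with localized Fourier support, and the monotonicity of $\ell^q$ norms.

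First I would recall Bernstein's inequality in the form needed here. If $g \in \CS'(\Lambda)$ has Fourier support in a ball of radius $\lambda \geqslant 1$, then for any $1 \leqslant p_1 \leqslant p_2 \leqslant \infty$ one has
\[
\| g \|_{L^{p_2}} \lesssim \lambda^{d(1/p_1 - 1/p_2)} \| g \|_{L^{p_1}}.
\]
This is proved by writing $g = \psi_\lambda \ast g$ for some Schwartz $\psi_\lambda$ whose Fourier transform equals $1$ on the spectral support of $g$, noting that $\psi_\lambda(x) = \lambda^d \psi_1(\lambda x)$, and applying Young's convolution inequality with the exponent $r$ satisfying $1 + 1/p_2 = 1/r + 1/p_1$.

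Next I would apply this to each Littlewood--Paley block. Since $\Delta_j f$ has Fourier support in a ball of radius $\sim 2^j$ for $j \geqslant -1$, Bernstein gives
\[
\| \Delta_j f \|_{L^{p_2}} \lesssim 2^{j d(1/p_1 - 1/p_2)} \| \Delta_j f \|_{L^{p_1}}.
\]
Setting $s' \assign s - d(1/p_1 - 1/p_2)$, this rearranges to
\[
2^{j s'} \| \Delta_j f \|_{L^{p_2}} \lesssim 2^{j s} \| \Delta_j f \|_{L^{p_1}}.
\]

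Finally, I would take the $\ell^{q_2}$-norm in $j$ of both sides. Since $q_1 \leqslant q_2$, the embedding $\ell^{q_1} \hookrightarrow \ell^{q_2}$ yields
\[
\| f \|_{B^{s'}_{p_2, q_2}} = \bigl\| (2^{j s'} \| \Delta_j f \|_{L^{p_2}})_j \bigr\|_{\ell^{q_2}} \lesssim \bigl\| (2^{j s} \| \Delta_j f \|_{L^{p_1}})_j \bigr\|_{\ell^{q_2}} \leqslant \bigl\| (2^{j s} \| \Delta_j f \|_{L^{p_1}})_j \bigr\|_{\ell^{q_1}} = \| f \|_{B^{s}_{p_1, q_1}},
\]
which is the claim. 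The only real work is Bernstein's inequality; once that is in hand, the embedding is immediate from two applications of monotonicity (for $p$ via Bernstein, for $q$ via the trivial $\ell^{q_1} \hookrightarrow \ell^{q_2}$ embedding). Since this is entirely standard, I would simply refer to \cite{bahouri_fourier_2011} for full details.
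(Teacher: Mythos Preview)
Your proof is correct and follows the standard Bernstein-inequality argument; the paper does not actually supply its own proof of this proposition but defers to~\cite{bahouri_fourier_2011,gubinelli_paracontrolled_2015}, where exactly this argument is given. So your approach is essentially the same as the one the paper relies on by reference.
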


\begin{proposition}
  \label{compactembedding}For any $s_1, s_2 \in \mathbbm{R}$ such that $s_1 <
  s_2$, any $p, q \in [1, \infty]$ the Besov space $B^{s_1}_{p, q_{}}$ is
  compactly embedded into $B^{s_2}_{p, q}$.
\end{proposition}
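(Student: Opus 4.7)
Let me note first that in the stated inequality the roles of $s_1$ and $s_2$ appear swapped relative to the usual convention (one compactly embeds the \emph{more} regular space into the less regular one); I will write the proof for the standard form, namely that $B^{s_2}_{p,q}(\Lambda) \hookrightarrow B^{s_1}_{p,q}(\Lambda)$ is a compact inclusion whenever $s_1 < s_2$, which is what is actually used elsewhere via Corollary~\ref{cor:tightness}.

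The plan is the classical two-scale decomposition: split every element into a low-frequency truncation and a high-frequency tail, and show that on the torus the low-frequency piece lives in a finite-dimensional subspace while the high-frequency piece is small uniformly thanks to the gap $s_2-s_1>0$. Concretely, for $N \geq -1$ let $S_N f \assign \sum_{j=-1}^{N} \Delta_j f$. Given a bounded sequence $(f_n) \subset B^{s_2}_{p,q}(\Lambda)$ with $\sup_n \|f_n\|_{B^{s_2}_{p,q}} \leq M$, I would first estimate the tail: directly from the definition,
\[
\|f_n - S_N f_n\|_{B^{s_1}_{p,q}} = \bigl\|(2^{j s_1}\|\Delta_j f_n\|_{L^p})_{j>N}\bigr\|_{\ell^q} \leq 2^{-N(s_2-s_1)}\|f_n\|_{B^{s_2}_{p,q}} \leq M\,2^{-N(s_2-s_1)},
\]
which tends to $0$ as $N \to \infty$ uniformly in $n$.

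Next I would handle the truncated piece. Since $\Lambda = \mathbb{T}^3$ is compact and $S_N f_n$ has Fourier support contained in the ball $\{|\xi|\leq 2^{N+1}R\}\cap \mathbb{Z}^3$, which is a fixed finite set, $S_N f_n$ lies in a finite-dimensional subspace $V_N \subset C^\infty(\Lambda)$ on which all norms (in particular $B^{s_1}_{p,q}$) are equivalent. The sequence $(S_N f_n)_n$ is bounded in $V_N$ because $\|S_N f_n\|_{B^{s_1}_{p,q}} \leq \|f_n\|_{B^{s_1}_{p,q}} \lesssim \|f_n\|_{B^{s_2}_{p,q}}$. By finite-dimensional Bolzano--Weierstrass, for each fixed $N$ we can extract a subsequence along which $S_N f_n$ converges in $B^{s_1}_{p,q}$.

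Finally, a standard diagonal extraction combined with the tail estimate produces a subsequence $(f_{n_k})$ that is Cauchy in $B^{s_1}_{p,q}$: given $\varepsilon>0$, pick $N$ so that $M\,2^{-N(s_2-s_1)} < \varepsilon/3$, then $k,\ell$ large enough so that $\|S_N f_{n_k} - S_N f_{n_\ell}\|_{B^{s_1}_{p,q}} < \varepsilon/3$, and conclude by the triangle inequality. Completeness of $B^{s_1}_{p,q}$ then yields a limit. The main (and only non-routine) ingredient is the finite-dimensionality of the low-frequency block, which is genuinely a consequence of working on the compact domain $\Lambda$ with a discrete Fourier spectrum; on $\mathbb{R}^d$ the embedding is only \emph{locally} compact, so this compactness of the spatial domain is where the argument earns its keep.
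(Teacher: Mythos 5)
Your proof is correct, and you are right to flag that the statement as printed has $s_1$ and $s_2$ interchanged: for $s_1<s_2$ it is the more regular space $B^{s_2}_{p,q}$ that embeds compactly into $B^{s_1}_{p,q}$, which is the form actually used in the paper. The paper itself gives no proof of this proposition (it is recalled in the appendix with a pointer to the standard references), and your argument --- uniform smallness of the high-frequency tail from the gap $s_2-s_1>0$, finite-dimensionality of the low-frequency block on the torus, and a diagonal extraction --- is exactly the standard one, with the key role of the compact spatial domain correctly identified.
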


\begin{definition}
  Let $f, g \in \mathcal{\CS} (\Lambda)$. We define the paraproducs
  \[ f \succ g \assign \sum_{j < i - 1} \Delta_i f \Delta_j g, \qquad
     \text{and} \qquad f \prec g \assign \sum_{j > i + 1} \Delta_i f \Delta_j
     g = g \succ f. \]
  Moreover we introduce the resonant product
  \[ f \circ g \assign \sum_{| i - j | \leqslant 1} \Delta_i f \Delta_j g. \]
  Then $fg = f \prec g + f \circ g + f \succ g$.
\end{definition}

\begin{proposition}
  \label{paraproductestimate}Let $\alpha < 0, \beta \in \mathbbm{R}$. For $f,
  g \in \CS (\Lambda)$ we have the estimates
  \[ \begin{array}{lllllll}
       \| f \succ g \|_{H^{\beta - \delta}} & \lesssim & \| f \|_{\VV^{\beta}}
       \| g \|_{L^2}, &  & \| f \succ g \|_{\VV^{\beta}} & \lesssim & \| f
       \|_{\VV^{\beta}} \| g \|_{L^{\infty}},\\
       \| f \succ g \|_{H^{\beta - \alpha}} & \lesssim & \| f \|_{\VV^{\beta}}
       \| g \|_{H^{\alpha}}, & \qquad & \| f \succ g \|_{\VV^{\beta}} &
       \lesssim & \| f \|_{\VV^{\beta}} \| g \|_{\VV^{\alpha}} .
     \end{array} \]

  Let $\alpha, \beta \in \mathbbm{R}$ such that $\alpha + \beta > 0$. Then
  \[ \begin{array}{lllllll}
       \| f \circ g \|_{H^{\alpha + \beta}} & \lesssim & \| f \|_{\VV^{\beta}}
       \| g \|_{H^{\alpha}}, & \qquad & \| f \circ g \|_{\VV^{\alpha + \beta}}
       & \lesssim & \| f \|_{\VV^{\beta}} \| g \|_{\VV^{\alpha}} .
     \end{array} \]
  By density the paraproduct and resonant product also extend to bilinear
  operators on the respective spaces. 
\end{proposition}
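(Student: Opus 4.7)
The plan is to derive all six inequalities by the classical Bony paraproduct machinery, relying on two ingredients: the spectral localization properties of the Littlewood--Paley blocks $\Delta_j$, and Bernstein's inequality for frequency-localized functions. The argument has the same structure throughout; only the final book-keeping differs between the paraproduct and the resonant product.

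For the paraproduct $f \succ g = \sum_{j < i - 1}\Delta_i f\,\Delta_j g$, the decisive observation is that each summand $\Delta_i f\,\Delta_j g$ is Fourier-supported in an \emph{annulus} of radius $\sim 2^i$, because $\Delta_j g$ (with $j < i-1$) lives in a strictly smaller ball than the annular support of $\Delta_i f$. Consequently $\Delta_k(f \succ g)$ picks up only indices $i$ with $|i-k| \leq N$ for some universal $N$, and writing the inner sum as a low-frequency cutoff $S_{i-2} g := \sum_{j<i-1}\Delta_j g$ gives
\[
\|\Delta_k(f \succ g)\|_{L^p} \lesssim \sum_{|i-k|\leq N}\|\Delta_i f\|_{L^{p_1}}\,\|S_{i-2}g\|_{L^{p_2}}, \qquad \tfrac{1}{p_1}+\tfrac{1}{p_2} = \tfrac{1}{p}.
\]
The four stated bounds then follow by estimating the low-frequency cutoff $S_{i-2}g$ in the four natural ways dictated by the regularity of $g$: trivially via $\|S_{i-2}g\|_{L^\infty} \lesssim \|g\|_{L^\infty}$ (using that $S_{i-2}$ is a uniformly bounded Fourier multiplier); via Bernstein paying $2^{id/2}$ when $g$ is only in $L^2$, which produces the Sobolev loss $\delta$; or by telescoping $S_{i-2}g = \sum_{j<i-1}\Delta_j g$ and summing the geometric series $\sum_{j<i} 2^{-j\alpha}$ weighted by $\|g\|_{\VV^\alpha}$ or $\|g\|_{H^\alpha}$, which converges precisely because $\alpha < 0$ and is dominated by its largest term $2^{-i\alpha}$, producing the gain that upgrades the target regularity.

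For the resonant product $f \circ g = \sum_{|i-j|\leq 1}\Delta_i f\,\Delta_j g$ the geometry is different: each summand is Fourier-supported in a \emph{ball} of radius $\sim 2^i$, not an annulus. Therefore $\Delta_k(f \circ g)$ can now receive contributions from all $i \gtrsim k$, and a typical bound reads
\[
\|\Delta_k(f \circ g)\|_{L^p} \lesssim \|f\|_{\VV^\beta}\,\bigl(\|g\|_{\VV^\alpha}\text{ or }\|g\|_{H^\alpha}\bigr)\sum_{i \geq k-N} 2^{-i(\alpha+\beta)},
\]
after inserting the factor $2^{i\beta}2^{i\alpha}2^{-i(\alpha+\beta)}$ under the sum. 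The geometric series converges to $\lesssim 2^{-k(\alpha+\beta)}$ precisely when $\alpha+\beta > 0$, which is exactly the stated hypothesis, and gives the natural output regularity $\alpha+\beta$. The $\ell^2$-summation in $k$ needed to reach $H^{\alpha+\beta}$ is controlled by Cauchy--Schwarz on the restricted range $i \gtrsim k$.

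The only delicate point in the whole argument is the careful book-keeping of the spectral supports, which dictates the two opposite summation directions (descending in $j$ for $\succ$, ascending in $i$ for $\circ$) and explains why the condition $\alpha + \beta > 0$ is needed only for the resonant product. The density extension in the last sentence is then automatic from the continuity of the bilinear maps on dense subspaces. Complete and fully general proofs are carried out in \cite{bahouri_fourier_2011,gubinelli_paracontrolled_2015} cited at the start of the appendix.
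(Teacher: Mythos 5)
Your overall architecture is the standard Bony/Littlewood--Paley argument, which is exactly what the paper itself does here: it gives no proof and defers to the references cited at the head of the appendix. The spectral bookkeeping (annuli for $\succ$, balls for $\circ$, hence the two opposite summation directions and the role of $\alpha+\beta>0$ only for the resonant product), the treatment of $f\circ g$, and the density extension are all fine. Two concrete points in your sketch are wrong, however. First, the loss $\delta$ in $\|f\succ g\|_{H^{\beta-\delta}}\lesssim\|f\|_{\VV^{\beta}}\|g\|_{L^2}$ does not come from Bernstein: applying Bernstein to $S_{i-2}g$ costs a factor $2^{id/2}$, i.e.\ $d/2=3/2$ derivatives, not an arbitrarily small $\delta$. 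The correct route is H\"older with $\|\Delta_i f\|_{L^\infty}\lesssim 2^{-i\beta}\|f\|_{\VV^{\beta}}$ and $\|S_{i-2}g\|_{L^2}\lesssim\|g\|_{L^2}$, which yields the uniform bound $\sup_k 2^{k\beta}\|\Delta_k(f\succ g)\|_{L^2}\lesssim\|f\|_{\VV^{\beta}}\|g\|_{L^2}$, i.e.\ a $B^{\beta}_{2,\infty}$ bound; the $\delta$ is the price of the embedding $B^{\beta}_{2,\infty}\hookrightarrow B^{\beta-\delta}_{2,2}=H^{\beta-\delta}$.

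Second, and more seriously, your reading of the $\alpha<0$ estimates is backwards. Since $\alpha<0$, the series $\sum_{j<i-1}2^{-j\alpha}$ is dominated by its largest term $2^{-i\alpha}=2^{i|\alpha|}$, which is a \emph{growth} in $i$, i.e.\ a loss of $|\alpha|$ derivatives, not a ``gain that upgrades the target regularity''. Combined with the factor $2^{-i\beta}$ from $\Delta_i f$, the argument you describe produces output regularity $\alpha+\beta<\beta$, that is $\|f\succ g\|_{H^{\alpha+\beta}}\lesssim\|f\|_{\VV^{\beta}}\|g\|_{H^{\alpha}}$ and $\|f\succ g\|_{\VV^{\alpha+\beta}}\lesssim\|f\|_{\VV^{\beta}}\|g\|_{\VV^{\alpha}}$. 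The exponents printed in the proposition, $H^{\beta-\alpha}$ and $\VV^{\beta}$, would constitute a regularity gain over $\alpha+\beta$ and are false as literally written for $\alpha<0$ (take $g=\Delta_{j_0}h$ normalized so that $\|g\|_{\VV^{\alpha}}=1$, hence $\|g\|_{L^\infty}\sim 2^{j_0|\alpha|}$; the blocks of $f\succ g$ at frequencies $2^i$ with $i>j_0$ then carry this unbounded factor). They are evidently typos for $\alpha+\beta$, consistent with how the estimates are invoked in the body of the paper and with the commutator estimate that follows. So your sketch, correctly executed, proves the intended (corrected) statement but not the literal one, and as written it papers over this discrepancy by misreading the direction of the geometric series; you should have flagged the sign error rather than claimed the series delivers the stated exponent.
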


\begin{proposition}
  \label{commutatorestimate}Let $\alpha \in (0, 1)$ $\beta, \gamma \in
  \mathbbm{R}$ such that $\beta + \gamma < 0$, $\alpha + \beta + \gamma > 0$.
  Then for $f, g, h \in \CS$, and for any $\delta > 0$,
  \[ \| (f \succ g) \circ h - g (f \circ h) \|_{H^{\alpha + \beta + \gamma -
     \delta}} \lesssim \| f \|_{\VV^{\gamma}} \| h \|_{\VV^{\beta}} \| g
     \|_{H^{\alpha}}, \]
  \[ \| (f \succ g) \circ h - g (f \circ h) \|_{\VV^{\alpha + \beta + \gamma}}
     \lesssim \| f \|_{\VV^{\gamma}} \| h \|_{\VV^{\beta}} \| g
     \|_{\VV^{\alpha}} . \]
\end{proposition}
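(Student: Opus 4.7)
\begin{proof*}{Proof proposal}
The plan is to follow the classical Bony commutator lemma: decompose everything in Littlewood--Paley blocks, exploit the fact that the dangerous low-frequency piece of $g$ is the same in both $(f\succ g)\circ h$ and $g(f\circ h)$, so that only the high frequencies of $g$ survive in the difference. This is what generates the extra regularity $\alpha$ and makes the combination $\alpha+\beta+\gamma>0$ appear naturally.

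More precisely, writing $S_{i-2}g = \sum_{j\leq i-2}\Delta_j g$ one has $f\succ g = \sum_i \Delta_i f\cdot S_{i-2}g$, and each summand has spectrum in an annulus of radius $\sim 2^i$. Hence, up to shifts of $O(1)$ in the indices,
\[
(f\succ g)\circ h \;=\; \sum_{i,l:\,|i-l|\leq N_0} \Delta_i f\cdot S_{i-2}g\cdot \Delta_l h,
\qquad
g(f\circ h) \;=\; \sum_{i,l:\,|i-l|\leq N_0} g\cdot \Delta_i f\cdot \Delta_l h,
\]
so subtracting and writing $g-S_{i-2}g = \sum_{m\geq i-1}\Delta_m g$ yields the key identity
\[
(f\succ g)\circ h - g(f\circ h) \;=\; -\sum_{\substack{|i-l|\leq N_0\\ m\geq i-1}} \Delta_m g\cdot \Delta_i f\cdot \Delta_l h.
\]
The crucial observation is that $\Delta_m g\cdot \Delta_i f\cdot \Delta_l h$ is spectrally supported in a ball of radius $\sim 2^{m}$ (since $m\geq i-1 \sim l$), so only the terms with $m\gtrsim k$ contribute to the $k$-th Littlewood--Paley block of the commutator.

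From here the proof is routine. For the $\VV^{\alpha+\beta+\gamma}$ bound, take $L^\infty$ on each factor and use $\|\Delta_m g\|_{L^\infty}\lesssim 2^{-m\alpha}\|g\|_{\VV^\alpha}$, $\|\Delta_i f\|_{L^\infty}\lesssim 2^{-i\gamma}\|f\|_{\VV^\gamma}$, $\|\Delta_l h\|_{L^\infty}\lesssim 2^{-l\beta}\|h\|_{\VV^\beta}$. Summing over $i,l$ with $|i-l|\leq N_0$, $i\leq m+1$ uses $\beta+\gamma<0$ to produce a factor $2^{-m(\beta+\gamma)}$; then summing over $m\gtrsim k$ uses $\alpha+\beta+\gamma>0$ to yield $2^{-k(\alpha+\beta+\gamma)}$ times the product of norms, which is the claim. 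For the $H^{\alpha+\beta+\gamma-\delta}$ bound one keeps $g$ in $L^2$ (so that $\|\Delta_m g\|_{L^2}\leq c_m 2^{-m\alpha}\|g\|_{H^\alpha}$ with $(c_m)\in \ell^2$) and estimates $f,h$ in $L^\infty$; the same geometric series gives $\|\Delta_k(\cdot)\|_{L^2}\lesssim \sum_{m\gtrsim k}c_m 2^{-m(\alpha+\beta+\gamma)}$ times the product of norms, and either a crude $\ell^\infty\ell^2$ bound with a factor $2^{k\delta}$, or a discrete Young's inequality in the variable $m-k$, gives $H^{\alpha+\beta+\gamma-\delta}$ (in fact one even recovers $H^{\alpha+\beta+\gamma}$).

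There is no real obstacle; the only thing to be careful about is bookkeeping of the frequency supports in the two explicit decompositions so that the telescoping $g-S_{i-2}g$ identity is clean. Density then extends the estimates from $\mathcal S(\Lambda)$ to the respective function spaces.
\end{proof*}
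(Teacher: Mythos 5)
Your overall strategy (telescope $g-S_{i-2}g=\sum_{m\geq i-1}\Delta_m g$, observe that $\Delta_m g\,\Delta_i f\,\Delta_l h$ lives in a ball of radius $\sim 2^m$, and close the geometric sums using $\beta+\gamma<0$ and then $\alpha+\beta+\gamma>0$) is exactly the right treatment of \emph{one half} of the commutator, and that half of your argument is correct. But your ``key identity'' is false, and the error is not an innocent $O(1)$ shift of indices. By definition
\[ (f\succ g)\circ h \;=\; \sum_{|j-l|\leq 1}\Delta_j(f\succ g)\,\Delta_l h
   \;=\; \sum_{|j-l|\leq 1}\sum_{i}\Delta_j\bigl(\Delta_i f\,S_{i-2}g\bigr)\,\Delta_l h, \]
and the outer projection $\Delta_j$ does \emph{not} commute with multiplication by $S_{i-2}g$. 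For a fixed $i$ the sum over $j$ cannot be collapsed to recover $\Delta_i f\,S_{i-2}g$, because each $j$ is tied to a different factor $\Delta_l h$, $|l-j|\leq 1$. The term you have silently discarded is, schematically, $\sum_{|i-j|\lesssim 1,\,|l-j|\leq 1}[\Delta_j,S_{i-2}g]\Delta_i f\cdot\Delta_l h$. If you estimate this brutally (putting $S_{i-2}g$ in $L^\infty$) each summand is only of size $2^{-i(\beta+\gamma)}\|g\|_{L^\infty}\|f\|_{\VV^\gamma}\|h\|_{\VV^\beta}$ and is supported in a ball of radius $\sim 2^i$, so it contributes at regularity $\beta+\gamma<0$ --- far short of the claimed $\alpha+\beta+\gamma>0$. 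It is therefore not negligible and must be treated as a genuine commutator.

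The missing ingredient is the classical kernel (first-order Taylor) estimate $\|[\Delta_j,a]u\|_{L^p}\lesssim 2^{-j}\|\nabla a\|_{L^\infty}\|u\|_{L^p}$, combined with $\|\nabla S_{i-2}g\|_{L^\infty}\lesssim 2^{i(1-\alpha)}\|g\|_{\VV^\alpha}$ (respectively the $H^\alpha$ analogue), which for $|i-j|\lesssim 1$ upgrades the bound on the discarded term by the crucial factor $2^{-i\alpha}$; after that it is summed exactly like your telescoping piece. This is the only place in the proof where the hypothesis $\alpha<1$ is used --- the fact that your argument never invokes $\alpha<1$ and would ``prove'' the estimate for all $\alpha>0$ is the tell-tale sign of the omission. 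Note the paper itself does not prove this proposition but refers to the literature (Lemma~2.4 of \cite{gubinelli_paracontrolled_2015}); the proof there is precisely the two-step decomposition into the kernel-commutator part (needing $\alpha<1$) and the telescoping part (needing $\alpha>0$) that you carried out. So: keep your second half, and add the commutator lemma for the first half.
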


\begin{proposition}
  \label{adjointparaproduct}Assume $f \in \VV^{\alpha}, g \in H^{\beta}, h \in
  H^{\gamma}$ and $\alpha + \beta + \gamma = 0$. Then
  \[ \int_{\mathbbm{T}^d} [(f \succ g) h - (f \circ h) g] \lesssim \| f
     \|_{\VV^{\alpha}} \| g \|_{H^{\beta}} \| h \|_{H^{\gamma}} . \]
\end{proposition}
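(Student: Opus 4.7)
The plan is to expand both terms using the Littlewood--Paley decomposition, exploit the frequency-sum-zero constraint imposed by spatial integration to show that the non-resonant contributions of the two terms cancel, and close by H\"older plus Cauchy--Schwarz and the scaling relation $\alpha+\beta+\gamma=0$.

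Concretely, write $f\succ g=\sum_i \Delta_i f\,S_{i-2}g$ with $S_{i-2}=\sum_{j\leq i-2}\Delta_j$, and $f\circ h=\sum_i \Delta_i f\,\widetilde\Delta_i h$ with $\widetilde\Delta_i=\sum_{|k-i|\leq 1}\Delta_k$, so that
\[ \int\bigl[(f\succ g)h-(f\circ h)g\bigr]=\sum_i\int \Delta_i f\bigl[S_{i-2}g\cdot h-\widetilde\Delta_i h\cdot g\bigr]. \]
By Plancherel, for any tempered $u$ one has $\int \Delta_i f\cdot u=\int \Delta_i f\cdot\Pi_i u$ for any Fourier multiplier $\Pi_i$ equal to $1$ on the support of $\varphi_i$. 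Applied with $u=S_{i-2}g\cdot h$ and $u=\widetilde\Delta_i h\cdot g$, one may replace $h$ (respectively $g$) inside each cross-term by a suitably fattened LP-projection that covers all scales at which the corresponding product can have Fourier mass near $2^i$; after routine bookkeeping of the cutoffs the bracket collapses and one obtains the identity
\[ \int\bigl[(f\succ g)h-(f\circ h)g\bigr] = \sum_{i}\sum_{|j-i|,\,|k-i|\leq N} c_{ijk}\int \Delta_i f\,\Delta_j g\,\Delta_k h, \]
with $|c_{ijk}|\lesssim 1$ and a fixed constant $N$ depending only on the LP profile. The structural point is that every surviving triple has all three frequency indices comparable.

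To close, estimate each term by H\"older with exponents $(\infty,2,2)$,
\[ \Bigl|\int \Delta_i f\,\Delta_j g\,\Delta_k h\Bigr| \lesssim 2^{-i\alpha}\|f\|_{\VV^\alpha}\,\|\Delta_j g\|_{L^2}\|\Delta_k h\|_{L^2}, \]
and use $-\alpha=\beta+\gamma$ together with $|i-j|,|i-k|\leq N$ to bound $2^{-i\alpha}\lesssim 2^{j\beta}\,2^{k\gamma}$. The Cauchy--Schwarz inequality applied to the diagonal sum then yields
\[ \sum_{i,j,k}(2^{j\beta}\|\Delta_j g\|_{L^2})(2^{k\gamma}\|\Delta_k h\|_{L^2}) \lesssim \|g\|_{H^\beta}\|h\|_{H^\gamma}, \]
which gives the claim.

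The main technical obstacle is the collapse step: performing the substitutions $h\rightsquigarrow\Pi_i h$ and $g\rightsquigarrow\Pi_i g$ simultaneously while keeping track of all the boundary terms in such a way that the residual sum is genuinely diagonal. This requires care with the lower endpoint of the Fourier support of $\Delta_i f\cdot S_{i-2}g$ (which, because of the extreme case $|n_1|\sim|n_2|\sim 2^{i-1}R$, can reach small frequencies); once the correct fattening is fixed, the rest of the argument is a routine Littlewood--Paley computation.
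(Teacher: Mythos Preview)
Your argument is correct. The paper does not actually supply a proof of this proposition; it only remarks that the result is easy and left to the reader. Your approach---expanding both terms in Littlewood--Paley blocks, using the zero-frequency constraint from spatial integration to show that the non-diagonal contributions cancel between the two terms, and closing with H\"older $(\infty,2,2)$ plus Cauchy--Schwarz along the diagonal using $\alpha+\beta+\gamma=0$---is precisely the computation the authors have in mind.

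The collapse step you flag does work out with the paper's decomposition ($\operatorname{supp}\varphi\subseteq B(0,2R)\setminus B(0,R)$). For the paraproduct term, the constraint $\xi_i+\xi_j+\xi_k=0$ with $j\le i-2$ forces $|\xi_k|\ge|\xi_i|-|\xi_j|\ge R2^i-R2^{i-1}=R2^{i-1}$ and $|\xi_k|\le|\xi_i|+|\xi_j|\le 5R2^{i-1}$, so $k\in\{i-1,i,i+1\}$ except for the borderline $k=i-2$, which can only occur when $j=i-2$ as well and is therefore already diagonal. This matches the resonant constraint $|k-i|\le 1$ exactly, so in the difference only triples with $j\ge i-1$ (and $|k-i|\le 1$) survive, all of which have $i,j,k$ within a fixed distance of each other. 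Your final H\"older/Cauchy--Schwarz estimate then closes the argument.
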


\begin{remark}
  Proposition~\ref{adjointparaproduct} is not proven in the above references
  but is quite easy and the reader can fill out a proof.
\end{remark}

\begin{definition}
  A smooth function $\eta : \mathbbm{R}^d \rightarrow \mathbbm{R}$ is said to
  be an $S^m$-multiplier if for every multi-index $\alpha$ there exists a
  constant $C_{\alpha}$ such that
  \begin{equation}
    \quad \left| \frac{\partial^{\alpha}}{\partial \xi^{\alpha}} f (\xi)
    \right| \lesssim_{\alpha} (1 + | \xi |)^{m - | \alpha |}, \qquad \qquad
    \qquad \forall \xi \in \mathbbm{R}^d . \label{symbolinq}
  \end{equation}
  We say that a family $(\eta_t)_{t \geqslant 0}$ is a uniform
  $S^m$-multiplier if (\ref{symbolinq}) is satisfied for every $\eta_t$ with
  $C_{\alpha}$ independent of $t \geqslant 0$. 
\end{definition}

\begin{proposition}
  \label{multiplierestimate}Let $\eta$ be an $S^m$-multiplier, $s \in
  \mathbbm{R}$, $p, q \in [1, \infty]$, and $f \in B_{p, q}^s
  (\mathbbm{T}^d)$, then
  \[ \| \eta (\mathD) f \|_{B_{p, q}^{s - m}} \lesssim \| f \|_{B_{p, q}^s} .
  \]
  Furthermore the constant depends only on $s, p, q, d$ and the constants
  $C_{\alpha}$ in (\ref{symbolinq}).
\end{proposition}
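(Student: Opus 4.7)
The plan is to reduce the claim to an annulus-localised bound via Littlewood--Paley decomposition and then estimate the relevant convolution kernels using the symbol inequality~(\ref{symbolinq}).

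\medskip

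First I would observe that, since $\eta(\mathD)$ and $\Delta_j$ are both Fourier multipliers, they commute, so $\Delta_j(\eta(\mathD)f) = \eta(\mathD)\Delta_j f$. Pick an auxiliary smooth function $\tilde\varphi$ supported in a slightly enlarged annulus around the support of $\varphi$, equal to $1$ on $\tmop{supp}\varphi$, and set $\tilde\varphi_j(\xi) = \tilde\varphi(2^{-j}\xi)$ for $j \geq 0$ and $\tilde\varphi_{-1}$ a smooth compactly supported function equal to $1$ on $\tmop{supp}\chi$. Then $\eta(\mathD)\Delta_j f = (\eta\tilde\varphi_j)(\mathD)\Delta_j f = K_j \ast \Delta_j f$, where $K_j = \mathcal{F}^{-1}(\eta\tilde\varphi_j)$. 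By Young's inequality it suffices to show $\|K_j\|_{L^1(\mathbb{T}^d)} \lesssim 2^{jm}$ uniformly in $j \geq -1$, since this yields
\[
\|\Delta_j(\eta(\mathD)f)\|_{L^p} \lesssim 2^{jm} \|\Delta_j f\|_{L^p},
\]
and multiplying by $2^{j(s-m)}$ and taking the $\ell^q$ norm in $j$ immediately gives the stated estimate.

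\medskip

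The kernel bound itself is obtained by scaling. For $j \geq 0$, the change of variable $\xi = 2^j\zeta$ gives $K_j(x) = 2^{jd}\tilde K_j(2^j x)$ with $\tilde K_j = \mathcal{F}^{-1}\bigl(\eta(2^j\cdot)\tilde\varphi\bigr)$, hence $\|K_j\|_{L^1} = \|\tilde K_j\|_{L^1}$. On the support of $\tilde\varphi$ one has $|\zeta|\sim 1$, so $|2^j\zeta|\sim 2^j$ and the $S^m$ hypothesis~(\ref{symbolinq}) combined with the Leibniz rule yields
\[
\bigl|\partial_\zeta^\alpha\bigl(\eta(2^j\zeta)\tilde\varphi(\zeta)\bigr)\bigr|
\lesssim_\alpha 2^{j|\alpha|}(1+2^j)^{m-|\alpha|} \lesssim_\alpha 2^{jm}
\]
uniformly in $j \geq 0$. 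Integrating by parts $N$ times one controls $(1+|x|)^N |\tilde K_j(x)|$ by $C_N 2^{jm}$, and choosing $N > d$ gives $\|\tilde K_j\|_{L^1} \lesssim 2^{jm}$ with a constant depending only on the $C_\alpha$'s in~(\ref{symbolinq}), $d$ and $\tilde\varphi$. The case $j=-1$ is even easier: $\eta\tilde\varphi_{-1}$ is smooth and compactly supported, so $\|K_{-1}\|_{L^1}$ is bounded by an absolute constant.

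\medskip

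Finally I would note that one must be slightly careful because we work on the torus, so ``$K_j\ast \Delta_j f$'' refers to periodic convolution and $K_j$ is the periodisation of the $\mathbb{R}^d$-kernel above; the $L^1(\mathbb{T}^d)$ norm is bounded by the $L^1(\mathbb{R}^d)$ norm of the non-periodised kernel by standard periodisation arguments (which is where the rapid decay $(1+|x|)^{-N}$ is used again). The main technical point, and really the only nontrivial step, is the scaling argument that converts the symbol inequality~(\ref{symbolinq}) into the factor $2^{jm}$ in the kernel bound; everything else is bookkeeping. Combining the kernel estimate with Young's inequality and summing in $\ell^q$ concludes the proof, with the final constant depending only on $s,p,q,d$ and the $C_\alpha$'s, as claimed.
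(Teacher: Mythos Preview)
Your argument is correct and is the standard proof of this multiplier estimate. However, the paper does not give its own proof of this proposition: it is stated in Appendix~\ref{sec:appendix-para} as one of several well-known facts about Besov spaces, with the reader referred to~\cite{bahouri_fourier_2011,gubinelli_paracontrolled_2015} for full details. Your proof---localising to dyadic annuli, rescaling the symbol, using the $S^m$ bounds~(\ref{symbolinq}) together with integration by parts to control the $L^1$ norm of the convolution kernel, and concluding via Young's inequality---is precisely the argument one finds in those references (see in particular Lemma~2.2 of~\cite{bahouri_fourier_2011}). The periodisation remark is also appropriate for the torus setting. So there is no discrepancy to report: you have supplied the standard proof that the paper chose to cite rather than reproduce.
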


\begin{proposition}
  \label{paraproductleibniz}Assume $m \leqslant 0$, $\alpha \in (0, 1), \beta
  \in \mathbbm{R}$. Let $\eta$ be an $S^m$-multiplier, $f \in \VV^{\beta}$, $g
  \in H^{\alpha}$. Then for any $\delta > 0$.
  \[ \| \eta (\mathD) (f \succ g) - (\eta (\mathD) f \succ g) \|_{H^{\alpha +
     \beta - m - \delta}} \lesssim \| f \|_{\VV^{\beta}} \| g \|_{H^{\alpha}}
     . \]
  Again the constant depends only on $\alpha, \beta, \delta$ and the constants
  in~(\ref{symbolinq}).
\end{proposition}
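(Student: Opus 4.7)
The plan is to decompose the paraproduct dyadically and estimate the resulting commutator scale by scale using a Taylor expansion of the symbol $\eta$. With the convention $S_k \assign \sum_{j \leqslant k} \Delta_j$, the paper's definition gives $f \succ g = \sum_i \Delta_i f \cdot S_{i - 2} g$, and since $\eta(\mathD)$ is a Fourier multiplier it commutes with each $\Delta_i$, so
\[ \eta(\mathD)(f \succ g) - (\eta(\mathD) f) \succ g = \sum_i C_i, \quad C_i \assign \eta(\mathD)\bigl(\Delta_i f \cdot S_{i - 2} g\bigr) - (\eta(\mathD) \Delta_i f) \cdot S_{i - 2} g. \]
Each $C_i$ is spectrally supported in an annulus of radius $\sim 2^i$ (since $\Delta_i f$ lives there and $S_{i - 2} g$ in a ball of radius $\sim 2^{i - 2}$), so by almost orthogonality it suffices to prove $\|C_i\|_{L^2} \lesssim 2^{i (m - \alpha - \beta)} \|f\|_{\VV^\beta} \|g\|_{H^\alpha}$ and accept a $2^{- i \delta}$ loss for dyadic summability.

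To bound $C_i$, I would compute its Fourier transform
\[ \widehat{C_i}(\xi) = \int \bigl[\eta(\xi) - \eta(\xi - \zeta)\bigr] \widehat{\Delta_i f}(\xi - \zeta)\, \widehat{S_{i - 2} g}(\zeta)\, \mathd \zeta \]
and use the first-order Taylor expansion $\eta(\xi) - \eta(\xi - \zeta) = \int_0^1 \nabla \eta(\xi - (1 - t)\zeta) \cdot \zeta\, \mathd t$. Since $\xi - \zeta$ is at frequency scale $\sim 2^i$ and $|\zeta| \leqslant 2^{i - 2}$, the intermediate point $\xi - (1 - t)\zeta$ stays at scale $\sim 2^i$, so $\nabla \eta$ there effectively behaves as an $S^{m - 1}$-multiplier localized at scale $2^i$. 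This rewrites $C_i$ (modulo a Taylor remainder) as a finite sum of terms of the form $(\tilde\eta_k(\mathD)\Delta_i f) \cdot (\partial_k S_{i - 2} g)$ with $\tilde\eta_k$ an $S^{m - 1}$-multiplier, and Proposition~\ref{multiplierestimate} gives
\[ \|C_i\|_{L^2} \lesssim 2^{i (m - 1)} \|\Delta_i f\|_{L^\infty} \,\|\nabla S_{i - 2} g\|_{L^2} \lesssim 2^{i (m - \alpha - \beta)} \|f\|_{\VV^\beta} \|g\|_{H^\alpha}, \]
where I used $\|\Delta_i f\|_{L^\infty} \lesssim 2^{- i \beta} \|f\|_{\VV^\beta}$ together with $\|\nabla S_{i - 2} g\|_{L^2} \lesssim 2^{i (1 - \alpha)} \|g\|_{H^\alpha}$, the latter being valid precisely because $\alpha \in (0, 1)$.

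Assembling the pieces via spectral localization yields
\[ \|\textstyle\sum_i C_i\|_{H^{\alpha + \beta - m - \delta}}^{2} \lesssim \sum_i 2^{2 i (\alpha + \beta - m - \delta)} \|C_i\|_{L^2}^{2} \lesssim \|f\|_{\VV^\beta}^{2} \|g\|_{H^\alpha}^{2} \sum_i 2^{- 2 i \delta}, \]
which is summable for any $\delta > 0$. The only technical subtlety I expect is handling the Taylor remainder: this can be absorbed either by iterating the expansion to higher order (using that $\partial^\gamma \eta$ is an $S^{m - |\gamma|}$-symbol, each extra order trading a power of $|\zeta|$ for one of $2^{- i}$), or by noting that the remainder has the same schematic structure as the leading term with an additional factor $|\zeta|/2^i \lesssim 1$ and therefore obeys the same bound. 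The constants throughout depend only on $\alpha, \beta, \delta$ and finitely many of the $C_\gamma$ in \eqref{symbolinq}, as claimed.
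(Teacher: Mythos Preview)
The paper does not prove this proposition; it is listed among the standard facts in Appendix~\ref{sec:appendix-para} with a blanket reference to \cite{bahouri_fourier_2011,gubinelli_paracontrolled_2015} for proofs. Your argument is exactly the standard one found in those sources: decompose the paraproduct dyadically, use a first-order Taylor expansion of $\eta$ to convert the commutator into a derivative landing on the low-frequency factor $S_{i-2}g$ (this is where $\alpha<1$ enters, via $\|\nabla S_{i-2}g\|_{L^2}\lesssim 2^{i(1-\alpha)}\|g\|_{H^\alpha}$), and sum with a $2^{-i\delta}$ loss.

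The only imprecision is the phrase ``modulo a Taylor remainder'' immediately after writing the exact integral mean-value identity, which conflates two slightly different routes. In practice one either works directly with the integral form---noting that for each $t\in[0,1]$ the map $\xi\mapsto\nabla\eta(\xi-(1-t)\zeta)$, restricted to the annulus of scale $2^i$, is a uniform $S^{m-1}$-multiplier, so the $t$-integral is harmless---or one expands around $\xi-\zeta$ so that the leading term is literally $\sum_k(\partial_k\eta(\mathD)\Delta_i f)\,\partial_k S_{i-2}g$ and the second-order remainder carries an extra factor $|\zeta|\,2^{-i}\lesssim 1$. You already flag this point and sketch both fixes, so the argument closes.
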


\begin{proposition}
  \label{besovembedding2}Let $\delta > 0$.We have for any $q_1, q_2 \in [1,
  \infty], q_1 < q_2$
  \[ \| f \|_{B_{p, q_2}^s} \leqslant \| f \|_{B_{p, q_1}^s} \leqslant \| f
     \|_{B_{p, \infty}^{s + \delta}} . \]
  Furthermore, if we denote by $W^{s, p}, s \in \mathbbm{R}, p \in [1,
  \infty]$ the fractional Sobolev spaces defined by the norm $\| f \|_{W^{s,
  q}} \assign \| \langle \mathD \rangle^s f \|_{L^q}$, then, for any $q \in
  [1, \infty]$,
  \[ \| f \|_{B_{p, q}^s} \leqslant \| f \|_{W^{s + \delta, p}} \leqslant \| f
     \|_{B_{p, \infty}^{s + 2 \delta}} . \]
\end{proposition}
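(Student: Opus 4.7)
The plan is to handle both chains by the same elementary trick: extracting a factor $2^{-j\delta}$ from each Littlewood--Paley block, so that summing against the $\ell^q$ weight always produces a convergent geometric series $\sum_j 2^{-j\delta q}$ for any $q\in[1,\infty]$ and any $\delta>0$.

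First I would treat the chain $\|f\|_{B^s_{p,q_2}}\leq \|f\|_{B^s_{p,q_1}}\leq \|f\|_{B^{s+\delta}_{p,\infty}}$. The first inequality is just the monotonicity of $\ell^q$-norms on counting measure: for a nonnegative sequence $(a_j)_j$ and $q_1\leq q_2$, one has $\|a\|_{\ell^{q_2}}\leq \|a\|_{\ell^{q_1}}$. Applied to $a_j = 2^{js}\|\Delta_j f\|_{L^p}$ this gives the claim. For the second inequality, I write
\[
2^{js}\|\Delta_j f\|_{L^p} = 2^{-j\delta}\bigl(2^{j(s+\delta)}\|\Delta_j f\|_{L^p}\bigr)\leq 2^{-j\delta}\|f\|_{B^{s+\delta}_{p,\infty}},
\]
and take the $\ell^{q_1}$-norm of both sides. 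The resulting series $\sum_{j\geq -1}2^{-j\delta q_1}$ is finite, and this yields $\|f\|_{B^s_{p,q_1}}\lesssim_{\delta,q_1}\|f\|_{B^{s+\delta}_{p,\infty}}$ (one absorbs the constant into $\delta$, or more cleanly proves the inequality up to a constant, which is the usual form of the statement).

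Second, I would turn to the Sobolev-type chain, where both directions rely on Proposition~\ref{multiplierestimate} applied to $\langle \mathD\rangle^{s+\delta}$ localized at frequency $2^j$. For $\|f\|_{B^s_{p,q}}\leq \|f\|_{W^{s+\delta,p}}$, observe that the symbol $\xi\mapsto \varphi_j(\xi)\langle\xi\rangle^{-(s+\delta)}$ is an $S^{-(s+\delta)}$-multiplier, and since it is supported at frequencies $\sim 2^j$ one gets $\|\Delta_j f\|_{L^p}\lesssim 2^{-j(s+\delta)}\|\langle\mathD\rangle^{s+\delta}f\|_{L^p}$, after which the same $2^{-j\delta}$ trick finishes the argument. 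For $\|f\|_{W^{s+\delta,p}}\leq \|f\|_{B^{s+2\delta}_{p,\infty}}$, I would decompose $\langle\mathD\rangle^{s+\delta}f=\sum_{j\geq -1}\Delta_j\langle\mathD\rangle^{s+\delta}f$, apply the triangle inequality in $L^p$, and use Proposition~\ref{multiplierestimate} in the other direction to get $\|\Delta_j\langle\mathD\rangle^{s+\delta}f\|_{L^p}\lesssim 2^{j(s+\delta)}\|\Delta_j f\|_{L^p}\leq 2^{-j\delta}\|f\|_{B^{s+2\delta}_{p,\infty}}$, and conclude by summing $\sum_{j\geq-1}2^{-j\delta}$.

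There is no real obstacle; the statement is entirely standard and reduces to bookkeeping. The only minor points to handle are the low-frequency block $j=-1$ (where $\langle\xi\rangle^{s+\delta}$ is simply bounded, so the estimate is trivial), the fact that the constants depend on $\delta$ (which is allowed), and the convergence of the Littlewood--Paley decomposition of $\langle\mathD\rangle^{s+\delta}f$ in $\CS'(\Lambda)$, which is guaranteed because $f\in B^{s+2\delta}_{p,\infty}(\Lambda)\subset\CS'(\Lambda)$.
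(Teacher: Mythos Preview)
Your argument is correct and is the standard one. The paper itself does not supply a proof of this proposition: it is stated without proof in the appendix, with the blanket remark at the start of Appendix~\ref{sec:appendix-para} that full details can be found in~\cite{bahouri_fourier_2011,gubinelli_paracontrolled_2015}. Your write-up is exactly the argument one finds there: monotonicity of $\ell^q$-norms for the first inequality, the $2^{-j\delta}$ extraction trick for the passage from $B^{s+\delta}_{p,\infty}$ down to $B^s_{p,q}$, and the block-by-block multiplier bound $\|\Delta_j\langle\mathD\rangle^{\pm(s+\delta)}g\|_{L^p}\lesssim 2^{\pm j(s+\delta)}\|\Delta_j g\|_{L^p}$ for the Sobolev comparison. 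One small comment: your appeal to Proposition~\ref{multiplierestimate} is slightly loose, since that proposition is phrased for Besov norms rather than $L^p$; what you actually use is the uniform $L^p\to L^p$ boundedness of the rescaled multipliers $2^{j(s+\delta)}\varphi_j(\xi)\langle\xi\rangle^{-(s+\delta)}$, which follows (for all $p\in[1,\infty]$) from the fact that their convolution kernels have uniformly bounded $L^1$-norm. With that clarification the proof is complete.
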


\

\

\end{document}